\documentclass[12pt]{amsart}
\usepackage[a4paper,margin=1in]{geometry}
\linespread{1.1}
\usepackage{microtype}
\usepackage[cal=euler]{mathalfa}
\usepackage{tikz,tikz-cd}

\input{commands}
\usepackage[colorlinks,allcolors=blue]{hyperref}

\opr\mm{mod}


\def\bfg{{\bar\fg}} 
\def\bfn{\bar\fn}
\def\bfb{\bar\fb}
\def\bfh{\bar\fh}
\def\bta{\bar\ta}
\def\bnd{{\mathsf b}} 

\opr\II{\mathcal I} 
\opr\B{\mathcal B} 
\def\Ig{\II^\bZ} 
\def\IB{\II_\bnd} 
\def\IBg{\IB^\bZ} 
\def\IIx#1#2{\II_{#2}(#1)} 
\def\Modg{\Mod^\bZ} 
\def\modg{\mmod^\bZ} 
\def\modBg{\modg_\bnd} 

\opr\Bind{\bfB} 
\def\W{\bfW} 
\def\R{\bfR} 
\def\HW{\R}
\opr\intq{\mathbf{I}} 

\def\Qp{Q^\bQ_+} 
\def\Pp#1{P_+^{#1}} 
\def\d{^\vee} 

\def\A{\sA} 
\def\HH{\sH} 
\def\WW{\sW} 

\def\cong{\iso}
\def\gr#1{_{(#1)}} 

\begin{document}
\title{Global Weyl modules for thin Lie algebras are finite-dimensional}

\author{Vladimir Dotsenko}
\address{Institut de Recherche Math\'ematique Avanc\'ee, UMR 7501, Universit\'e de Strasbourg et CNRS, 7 rue Ren\'e-Descartes, 67000 Strasbourg, France}
\email{vdotsenko@unistra.fr}
\author{Sergey Mozgovoy}
\address{School of Mathematics, Trinity College Dublin, Dublin 2, Ireland
\newline\indent
Hamilton Mathematics Institute, Dublin 2, Ireland}
\email{mozgovoy@maths.tcd.ie}

\begin{abstract}
The notion of Weyl modules, both local and global, goes back to Chari and Pressley in the case of affine Lie algebras, and has been extensively studied for various Lie algebras graded by root systems. We extend that definition to a certain class of Lie algebras graded by weight lattices and prove that if such a Lie algebra satisfies a natural ``thinness'' condition, then already the global Weyl modules are finite-dimensional. Our motivating example of a thin Lie algebra is the Lie algebra of polynomial Hamiltonian vector fields on the plane vanishing at the origin. 
We also introduce stratifications of categories of modules over such Lie algebras and identify the corresponding strata categories.
\end{abstract}

\maketitle

\section{Introduction}

Weyl modules $W(\lambda)$  were originally introduced by Chari and Pressley \cite{MR1850556} for the extended affine Lie algebra $\bfg=\fg\otimes\bC[t,t^{-1}]\oplus\bC d$ associated to a finite-dimensional simple Lie algebra $\fg$; they are parametrized by dominant integral weights $\lambda$.
These modules were related to the Demazure modules of affine Kac--Moody algebras \cite{MR2271991}, and generalized to a plethora of contexts, an incomplete list of which includes twisted affine Lie algebras \cite{MR2423816,MR3055822}, algebras of $\fg$-valued currents on arbitrary affine algebraic variety~\cite{MR2102326}, Yangians~\cite{TanYangian,MR3334147}, toroidal Lie algebras~\cite{MR4160929,MR4681327}, currents valued in an arbitrary Kac--Moody Lie algebra~\cite{MR3520262}, Lie superalgebras~\cite{MR3951769,MR3981101}, Borel--de Siebenthal pairs \cite{MR3848450}, and Lie algebras of TKK type \cite{lau2023jordanalgebrasweightmodules}; generalizations for the case of non-dominant weights have also been considered \cite{MR3703467,MR3682816,MR3787562}.
More precisely, one can talk about global Weyl modules and local
Weyl modules. In both cases, one considers the category $\II(\bfg)$ of $\bfg$-modules that are integrable over $\fg$. For each dominant weight $\lambda$ of $\fg$, the global Weyl module $W(\la)\in\II(\bfg)$ is the maximal integrable module that possesses a cyclic vector $v$ of weight $\la$ and such that $xv=0$ for every $x\in \bfg$ of positive $\fh$-weight. Local Weyl modules are obtained from the global ones by imposing the condition that the cyclic vector $v$ is additionally an eigenvector for the weight zero subalgebra of $\bfg$.

All the examples above share a common feature: the algebra $\bfg$ whose representations one considers is, in a sense, made of several copies of the adjoint representation of
the Lie algebra $\fg$. Therefore~$\bfg$ is
graded by a root system~\cite{MR1161095}, and as such fits into the general framework of \cite{MR3765456}. In this paper, we consider a more general case of Lie algebras $\bfg$ that contain a finite-dimensional semisimple Lie algebra $\fg$ in a way that the adjoint action of $\fg$ on $\bfg$ is integrable, so that $\bfg$ is graded by the weight lattice of $\fg$. We shall however assume that all nonzero weights of the adjoint action $\fg$ on $\bfg$ are either positive or negative; for instance, this way one can consider various subalgebras of the Lie algebra of vector fields on the plane that preserves the origin, or the Lie algebras $\sl(\lambda)$, $\lambda\in\bC$, defined by Feigin~\cite{MR940679}.

Our original motivation came from thinking about the Lie algebra $\HH_2$ of polynomial Hamiltonian vector fields on the plane. Several different constructions of $\HH_2$-modules emerged in representation theory and algebraic geometry in the past years. For instance, that Lie algebra acts on global sections of various vector bundles on Hilbert schemes of points on the plane \cite{MR1918676}, on the ``$y$-ified Khovanov--Rozansky homology of links'' of~\cite{gorsky2024tautologicalclassessymmetrykhovanovrozansky}, and on vector spaces of the form $H^\bullet(X)[x,y]$, where $X$ is a moduli space of stable parabolic Higgs bundles of certain rank and degree for a generic stability condition~\cite{hausel2022pwh2}; in this latter case, this action was instrumental in one of the proofs of the $P=W$ conjecture.
It is probably reasonable to speculate that at least some of these actions emerge because the Lie algebra~$\HH_2$ is very close to the Lie algebra of the group of polynomial automorphisms of the
plane, see \cite{MR607583}.

The Lie algebra $\HH_2$ has no nontrivial finite-dimensional representations; however, it has an important Lie subalgebra $L_0(\HH_2)$ of Hamiltonian vector fields preserving the origin. That latter Lie algebra acts on finite-dimensional vector spaces of global sections of the above-mentioned vector bundles over
the punctual Hilbert schemes; moreover, the vector spaces $H^\bullet(X)[x,y]$ look like the underlying vector spaces of modules that are coinduced from that subalgebra (in reality the action is more complicated and is not coinduced). This raises a question of better understanding  finite-dimensional representations of the Lie algebra $L_0(\HH_2)$, and the question of studying Weyl modules and their quotients arises naturally. While studying these modules, we realized, that, by contrast with most previously studied cases (with the exception of some of the Weyl modules of \cite{MR3848450}), already all global Weyl modules over $L_0(\HH_2)$ are finite-dimensional.

In fact, once this was observed, we managed to prove that the global Weyl modules are finite-dimensional for a large class of algebras that contains $L_0(\HH_2)$. Namely, let us call the Lie algebra $\bfg$ \emph{thin} (with respect to $\fg$) if the centralizer of $\fg$ in $\bfg$ is zero and all irreducible $\fg$-modules have finite multiplicity in $\bfg$.
In the case of a thin algebra, a very general result on finite-dimensionality holds. To state that, we define the functor $\Bind_{\le\la}:\II(\fg)\to\II(\bfg)$ that sends an integrable $\fg$-module $V$ to the maximal quotient of the induced $\bfg$-module $\Ind^{\bfg}_{\fg}V$ that is $\fg$-integrable and $\la$-bounded. (In particular, the module $\Bind_{\le\la}(L(\la))$ coincides with
the global Weyl module $W(\la)$.) The general finite-dimensionality result that we prove is the following one.

\begin{itheorem}
If the adjoint action of $\fg$ on $\bfg$ is thin, then the functor $\Bind_{\le\la}$ maps finite-dimensional modules to finite-dimensional modules.
\end{itheorem}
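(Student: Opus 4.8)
\emph{Reduction to irreducibles.} The functor $\Bind_{\le\la}$ is additive: $\Ind^\bfg_\fg$ is, and the $\bfg$-submodule of $\Ind^\bfg_\fg V$ generated by the weight spaces of weight $\not\le\la$ — the kernel cutting out the maximal $\la$-bounded quotient — is formed summand by summand. Since a finite-dimensional integrable $\fg$-module is a finite direct sum of irreducibles $L(\nu)$, and $\Bind_{\le\la}(L(\nu))=0$ unless $\nu\le\la$, it suffices to prove that $M:=\Bind_{\le\la}(L(\nu))$ is finite-dimensional for a fixed dominant $\nu\le\la$.

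\emph{Reduction to the top.} As $M$ is $\fg$-integrable and $\la$-bounded, every $L(\mu)$ occurring in $M$ has $\mu\le\la$; the dominant weights $\le\la$ form a finite set, so $M$ has only finitely many $\fg$-isotypic components and finitely many weights, and $\dim M<\infty$ is equivalent to finite-dimensionality of the weight-graded space $M^{\fn_+}$ of $\fg$-highest-weight vectors, whose $\mu$-part has dimension the multiplicity of $L(\mu)$. Decompose $\bfg=\bfg_-\oplus\bfg_0\oplus\bfg_+$ into the parts of negative, zero and positive $\fh$-weight; these are Lie subalgebras with $\fn_\pm\subset\bfg_\pm$ and $\fg\cap\bfg_0=\fh$, and $\bfg_+$ acts locally nilpotently on $M$ (its weights pair positively with $\rho^\vee$, while those of $M$ are bounded above for this pairing). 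Together with PBW this should reduce the problem to the extreme case $\nu=\la$ — either via a highest-weight-category structure on $\la$-bounded integrable $\bfg$-modules, in which $M$ would have a filtration by global Weyl modules, or by a direct extension of the argument — and there, using $\bfg_+\bar v_\la=0$, it reduces to showing that the highest-weight space $W(\la)_\la=U(\bfg_0)\bar v_\la$ (the highest-weight algebra $\A_\la$) is finite-dimensional.

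\emph{The core estimate.} To bound $\A_\la$ (and the finitely many analogous pieces of $M^{\fn_+}$) one combines three kinds of relation. First, $\fg$-integrability yields $f_i^{\,\la(\alpha_i^\vee)+1}\bar v_\la=0$. Second, $C_\bfg(\fg)=0$ makes $\ad(e_i)\colon\bfg_0\hookrightarrow\bigoplus_i\bfg_{\alpha_i}$ injective (a weight-zero vector killed by all $e_i$ spans a trivial $\fg$-submodule), so each $z\in\bfg_0$ is a sum of brackets $[f_i,w]$ with $w\in\bfg_{\alpha_i}$, and $\bfg_+\bar v_\la=0$ then expresses $z\bar v_\la$ through $-\sum w\,f_i\bar v_\la$. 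Third, every weight vector of weight $\not\le\la$ is zero in $M$, and through identities $[\xi,\eta]\bar v=\xi\eta\bar v-\eta\xi\bar v$ in which $\xi\bar v$ or $\eta\bar v$ is such a vector, this vanishing propagates to weight vectors of admissible weight — this is what forces $\bfg_0\bar v_\la$ and the ``deep'' parts of $\bfg_\pm$ to act trivially, as one can check directly in the model example $L_0(\HH_2)$. Feeding these into Garland-type identities (expansions of powers of $\ad(f_i)$), and using that each $\fg$-type has finite multiplicity in $\bfg$ to see that only finitely much survives at each stage and that the relevant PBW-degree is bounded, should collapse $\A_\la$ to finite dimension; one then descends via $M=U(\bfg_-)\cdot(\text{highest-weight part})$, with $\bfg_-$ again locally nilpotent and integrability bounding the weights.

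\emph{Main obstacle.} The serious difficulty is this last estimate — reconciling the infinite-dimensionality of $\bfg_0,\bfg_+,\bfg_-$ with a finite-dimensional answer. This is exactly where thinness is decisive, and both of its clauses enter: in the classical affine case $\bfg=\fg\otimes\bC[t]$ one has $C_\bfg(\fg)=0$ but infinite multiplicities, $\bfg_0=\fh\otimes\bC[t]$, and $\A_\la$ is an infinite-dimensional ring of symmetric polynomials — so the content of the theorem is precisely that finite multiplicities, together with the bracket-propagation of the vanishings (which relies on the abundance of $\fg$-types available in a thin algebra), destroy the very mechanism producing infinite-dimensional Weyl modules there. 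Arranging the Garland-type bookkeeping so that thinness can actually be brought to bear is the technical heart of the argument.
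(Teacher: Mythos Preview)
Your write-up is an outline, not a proof: the ``core estimate'' section openly defers the actual work (``should collapse $\A_\la$ to finite dimension'', ``arranging the Garland-type bookkeeping\ldots is the technical heart''). You correctly identify that the zero-centralizer hypothesis lets you write every $z\in\bfg_0$ as a sum of brackets $[f,w]$ with $w$ of positive weight, and that this rewrites $z\bar v_\la$ in terms of $w\cdot(f\bar v_\la)$; but, as you yourself note, the same rewriting is available in $\fg\otimes\bC[t]$ where $\A_\la$ is infinite-dimensional. What you never supply is a mechanism that converts finite multiplicity into an actual termination or degree bound. ``Garland-type identities'' will not do this on their own: in the affine case they are exactly what produces the (infinite-dimensional) symmetric-polynomial description of $\A_\la$. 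So the gap is not cosmetic; the decisive step is missing.

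The paper's argument sidesteps the non-commutative bookkeeping entirely. It passes to the PBW-associated graded, so that the problem becomes: for the \emph{commutative} algebra $S\bfg$, show that the quotient $S_\Om(\bfg)=S\bfg/(S\bfg\cdot\bta_\Om S\bfg)$ is finite-dimensional. In that quotient the subalgebra $B$ generated by all nonzero-weight components of $\bfg$ is finite-dimensional (each $S^k(\bfg_\mu)$ lands in weight $k\mu$, which leaves the finite set of allowed weights for large $k$; finite multiplicity makes $B$ a finite product of such pieces). The weight-zero generators are handled by exactly your observation $u=fv$: since $f$ acts as a derivation on $S_\Om(\bfg)$ and $\bar v^m=0$ for $m\gg0$, expanding $0=f^m(\bar v^m)$ by Leibniz gives $m!\,\bar u^m\in\sum_{r<m}B\bar u^r$, i.e.\ $\bar u$ is \emph{integral} over $B$. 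Hence $S_\Om(\bfg)$ is module-finite over the finite-dimensional $B$. Your bracket-rewriting idea is thus the right germ, but the paper makes it effective by first commuting everything (via PBW) so that ``rewriting'' becomes an honest integrality relation rather than an endless shuffle of non-commuting factors.
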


Independently of the above result, we prove that the category $\IB(\bfg)\sbs\II(\bfg)$ of $\bfg$-modules that are integrable and bounded over $\fg$ can be equipped with a (left) stratification structure so that the global Weyl modules are the standard objects and the local Weyl modules are the proper standard objects of this stratification. (At this point we must warn the reader that the existing literature contains many closely related notions of  stratification of abelian categories
\cite{MR957457,cline_stratifying,khoroshkin2015highestweightcategoriesmacdonald,MR3338680,wiggins_stratified}.
Additionally, standardly stratified categories \cite{cline_stratifying,MR3338680} are often referred to as stratified.) The strata categories of this stratification are parametrized by dominant weights $\la\in P_+$
and can be identified with the categories $\Mod\A_\la$ of left modules over the associative algebras $\A_\la\iso\End_{\bfg}(W(\la))^\op$
originally introduced in \cite{MR2718936} for the Lie algebras of the form $\bfg=\fg\ts A$, where $A$ is commutative.
If $\bfg=\bop_{i\ge 0}\bfg\gr i$ is non-negatively graded,
with $\bfg\gr0=\fg$ and finite-dimensional components $\bfg\gr i$,
then the category $\modBg(\bfg)$ of $\bZ$-graded $\bfg$-modules (bounded over~$\fg$) with finite-dimensional components
can be also equipped with a left stratification structure.
We will show that this structure is a full stratification
if $\bfg$ has an automorphism that restricts to $-\id$ on $\fh$
(this is true, in particular, for $\bfg=L_0(\HH_2)$).
\medskip

The paper is organized as follows. In Section \ref{sec:int-bounded}, we fix the notational conventions concerning semisimple Lie algebras and prove an adjunction result involving maximal integrable quotients of $\fg$-modules. In Section \ref{sec:findim}, we introduce the functor $\Bind_{\le\la}$, prove the finite-dimensionality results, and give several examples of situations where this result applies.
In Section \ref{sec:stratifications}, we introduce left recollements
and left stratifications of abelian categories.
We introduce the algebras~$\A_\la$,
generalize to our context the categorical approach of \cite{MR2718936,MR3765456},
and equip the category $\IB(\bfg)$ with a left stratification structure with strata categories $\Mod\A_\la$.
Moreover, we show for $\bZ$-graded Lie algebras
$\bfg=\bop_{i\ge 0}\bfg_{(i)}$
that the category $\modBg(\bfg)$ has a full stratification structure
if $\bfg$ satisfies some additional conditions.
In Section \ref{sec:local}, we discuss appropriate local Weyl modules, particularly those arising from augmentations $\bfg\to\fg$. We conclude with a conjecture motivated by experimental data on local Weyl modules for the Lie algebra $L_0(\HH_2)$:
it appears that the socle of every local Weyl module for that Lie algebra is isomorphic to the defining two-dimensional $\sl_2$-module.


\subsection*{Acknowledgements. } A large part of this work was done during visits of the first author to Trinity College Dublin in February and April 2024 and the second author to the University of Strasbourg in March 2024. We thank these institutions for excellent working conditions. Research of the first author was supported by Institut Universitaire de France.

The intention of studying Weyl modules for the Lie algebra of vector fields on $\bC^2$ was indicated to the first author by Boris Feigin and Sergey Loktev some 20 years ago; as we understand from conversations with them, that project never led to any published work, and did not reach the stage where finite-dimensionality of global Weyl modules could be conjectured. We thank them for useful discussions. We also thank Anton Khoroshkin for useful discussions of \cite{feigin2023peterweyltheoremiwahorigroups}, for valuable comments on a draft version of the paper, and for drawing our attention to his unpublished work \cite{khoroshkin2015highestweightcategoriesmacdonald}.
The first author thanks Vera Serganova for interesting discussions of a draft version of the paper.
All the computations given in examples below were performed in \texttt{Magma} \cite{MR1484478}.

\section{Integrable and bounded modules}\label{sec:int-bounded}

\subsection{Notation}
Throughout the paper, we use the following notation.
We denote by $\fg$ a semisimple Lie algebra with a Cartan subalgebra $\fh\sbs\fg$, a root system $\De=\De_+\sqcup\De_-\sbs\fh^*$, coroots $\al\d\in\fh$ for $\al\in\De$, and Chevalley generators $(e_i,f_i,h_i)_{i=1}^r$.
We let $Q=\sum_{\al\in\De}\bZ\al$ and $P=\sets{\la\in\fh^*}{\ang{\la,\al\d}\in\bZ\ \forall \al\in\De}$ be, respectively, the root lattice and the weight lattice of $\fg$. Furthermore, let $Q_+=\sum_{\al\in\De_+}\bZ_{\ge0}\al$ and
$P_+=\sets{\la\in P}{\ang{\la,\al\d}\in\bZ_{\ge0}\ \forall \al\in\De_+}$
be, respectively, the positive root monoid and the dominant weight monoid.
The monoid $P_+$ is contained in the positive root cone
$\Qp=\sum\nolimits_{\al\in\De_+}\bQ_{\ge0}\al$.
We shall consider the partial order on $P$ defined by $\la\le\mu$ if $\mu-\la\in \Qp$.
This poset is locally finite since the intervals
\[P_{\la,\mu}=\sets{\nu\in P}{\la\le\nu\le\mu}=P\cap(\la+\Qp)\cap(\mu-\Qp)\]
are finite.
In particular, the set
$\Pp{\le\la}=\sets{\mu\in P_+}{\mu\le\la}\sbs P_{0,\la}$ is finite.
We may have $\Pp{\le\la}\ne\es$ for $\la\not\in P_+$; for instance, if we consider $\fg=\sl_3$, so that $Q=\bZ\al_1\oplus\bZ\al_2$
with $(\al_1,\al_1)=(\al_2,\al_2)=2$, $(\al_1,\al_2)=-1$ and $P=\bZ\om_1\oplus\bZ\om_2$,
where $\om_1=\frac13(2\al_1+\al_2)$ and
$\om_2=\frac13(\al_1+2\al_2)$, we have $\om_1\in \Pp{\le\la}$ for
$\la=3\om_1-\om_2$.

\subsection{Integrable modules}
A \fg-module $V$ is called \idef{weighted}
if $V=\bop_{\la\in\fh^*}V_\la$, where
$V_\la=\sets{v\in V}{hv=\la(h)v\ \forall h\in \fh}$ is the weight $\la$ subspace.
A weighted \fg-module $V$ is called \emph{integrable} \cite[\S3.6]{MR1104219} if all $e_i$ and $f_i$ are locally-nilpotent on $V$.
One can show \cite[\S3.8]{MR1104219} that integrability is equivalent to either of the following properties:
\begin{itemize}
\item Every vector of $V$ is contained in a finite-dimensional $\fg$-submodule.
\item $V$ is a direct sum of finite-dimensional $\fg$-modules.
\end{itemize}
Note that under these conditions we have $P(V)=\sets{\la\in\fh^*}{V_\la\ne0}\sbs P$.

For a weight $\la\in P$, a weighted $\fg$-module $V$ is called \emph{$\la$-bounded} if $P(V)\sbs\la-\Qp$. Furthermore, such module is said to be \emph{bounded} if $P(V)\sbs\bigcup_{i=1}^n(\la_i-\Qp)$ for finitely many weights $\la_i\in P$.
Note that an integrable $\fg$-module $V$ is $\la$-bounded if and only if $\mu\le\la$ for every irreducible summand $L(\mu)$ of $V$.


\begin{lemma}\label{lm:int=ib}
For a weighted $\fg$-module~$V$ the following conditions are equivalent
\begin{enumerate}
\item $P(V)$ is finite.
\item $V$ is integrable and bounded.
\end{enumerate}
\end{lemma}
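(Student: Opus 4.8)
The plan is to prove the two implications separately; the only real content is the interplay between finiteness of the weight set $P(V)$ and the two defining conditions, integrability and boundedness.

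For (1)$\Rightarrow$(2) I would start from the fact that the Chevalley generators shift weights, $e_i V_\nu\sbs V_{\nu+\al_i}$ and $f_i V_\nu\sbs V_{\nu-\al_i}$, and that the points $\nu+n\al_i$, $n\in\bZ$, are pairwise distinct in $\fh^*$. Since $P(V)$ is finite, for a weight vector $v\in V_\nu$ all but finitely many of the vectors $e_i^nv$ (resp.\ $f_i^nv$) lie in weight spaces outside $P(V)$ and hence vanish; maximizing over the finitely many weight components of an arbitrary vector of $V$ shows that each $e_i$ and each $f_i$ acts locally nilpotently, so $V$ is integrable. Integrability forces $P(V)\sbs P$, so I may take the finitely many elements of $P(V)$ themselves as weights $\la_1,\dots,\la_n\in P$, and then $P(V)\sbs\bigcup_{i=1}^n(\la_i-Q_+)$ holds trivially; thus $V$ is bounded.

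For (2)$\Rightarrow$(1), integrability gives a decomposition $V\cong\bop_j L(\mu_j)$ into finite-dimensional irreducibles with $\mu_j\in P_+$, so that $P(V)=\bigcup_j P(L(\mu_j))$. Fixing weights $\la_1,\dots,\la_n\in P$ that witness boundedness, each highest weight $\mu_j$ lies in $P(L(\mu_j))\sbs P(V)$, hence in $\la_i-Q_+\sbs\la_i-\Qp$ for some $i$; since $\mu_j$ is dominant this says $\mu_j\in\Pp{\le\la_i}$. Each set $\Pp{\le\la_i}$ is finite because the order on $P$ is locally finite (as recalled in the notation subsection), so only finitely many distinct dominant weights occur among the $\mu_j$, and therefore $P(V)$ is a finite union of the finite sets $P(L(\mu_j))$ and is itself finite.

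I do not expect a genuine obstacle here: both directions are short, and the only non-formal ingredient is the local finiteness of $(P,\le)$, equivalently the finiteness of the sets $\Pp{\le\la}$, which is already on record. Two minor points deserve care. In (2)$\Rightarrow$(1) the index set of the irreducible decomposition may be infinite, so one argues with the finitely many isomorphism types $L(\mu_j)$ that occur rather than with the individual summands. In (1)$\Rightarrow$(2) one should extract integrability first, before invoking the definition of boundedness, in order to know that the weights making up $P(V)$ actually lie in the weight lattice $P$.
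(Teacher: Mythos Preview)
Your proof is correct. For (1)$\Rightarrow$(2) you spell out what the paper dismisses as ``trivial'', and your care about establishing integrability before boundedness (so that the finitely many weights in $P(V)$ actually lie in $P$) is well placed.

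For (2)$\Rightarrow$(1) you take a genuinely different route from the paper. The paper argues directly on the level of the weight set: integrability makes $P(V)$ invariant under the Weyl group, so applying the longest element $w_0$ (which satisfies $w_0(Q_+)=-Q_+$) to the inclusion $P(V)\sbs\bigcup_i(\la_i-\Qp)$ gives $P(V)\sbs\bigcup_i(w_0\la_i+\Qp)$ as well, and hence $P(V)\sbs\bigcup_{i,j}P_{w_0\la_i,\la_j}$, a finite union of finite intervals. You instead pass through the irreducible decomposition and use finiteness of the sets $\Pp{\le\la_i}$ to bound the isomorphism types that occur. Both arguments ultimately rest on local finiteness of $(P,\le)$, but the paper's version has a practical advantage: it produces an explicit finite set $\Om=\bigcup_{i,j}P_{w_0\la_i,\la_j}$ depending only on the bounding weights $\la_1,\dots,\la_n$, not on $V$ itself, and this $\Om$ is exactly what is used in the construction of the maximal integrable quotient $\intq(V)$ immediately after the lemma.
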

\begin{proof}
The implication (1) $\Rightarrow$ (2) is trivial. Assuming (2), we consider the action of the Weyl group $W$ on $P(V)$.
Let $w_0\in W$ be the longest element so that $w_0(Q_+)=-Q_+$.
Then $P(V)\sbs\bigcup_{i,j}(w_0\la_i+\Qp)\cap (\la_j-\Qp)$
and the sets $P_{\la,\mu}=P\cap(\la+\Qp)\cap(\mu-\Qp)$ are finite.
\end{proof}

In what follows, we denote by $\II(\fg)$ the category of integrable \fg-modules, by $\B(\fg)$ the category of bounded $\fg$-modules, and by $\IB(\fg)$ the category of integrable and bounded $\fg$-modules.
The BGG category $\cO\sbs\B(\fg)$ consists of modules having finite-dimensional weight spaces.
The intersection $\cO\cap\cI(\fg)$ consists of finite-dimensional $\fg$-modules.
For $V\in\B(\fg)$, let $\intq(V)$ denote the maximal integrable quotient of $V$.
In view of Lemma \ref{lm:int=ib}, this module may be constructed as follows. Since $V$ is bounded, we have $P(V)\sbs\bigcup_{i}(\la_i-\Qp)$ for finitely many $\la_i\in P$,
and we may consider the finite set $\Om=\bigcup_{i,j}P_{w_0\la_i,\la_j}$
and define $\intq(V)$ as the quotient of $V$ by the \fg-submodule generated by $\bop_{\mu\notin \Om}V_\mu$.

\begin{lemma}
The maximal integrable quotient functor $\intq\colon\B(\fg)\to \IB(\fg)$ is left adjoint to the embedding functor $\imath\colon\IB(\fg)\to \B(\fg)$.
\end{lemma}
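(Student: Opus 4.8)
The plan is to verify directly the adjunction isomorphism $\Hom_{\IB(\fg)}(\intq(V), W) \iso \Hom_{\B(\fg)}(V, \imath(W))$, natural in $V \in \B(\fg)$ and $W \in \IB(\fg)$. Since $\intq(V)$ is by construction a quotient of $V$, say with quotient map $q_V \colon V \onto \intq(V)$, composition with $q_V$ gives a natural map $\Hom_{\IB(\fg)}(\intq(V), W) \to \Hom_{\B(\fg)}(V, \imath(W))$; I must show this is a bijection.

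First I would check injectivity: if $f \colon \intq(V) \to W$ satisfies $f \circ q_V = 0$, then $f$ vanishes on the image of $q_V$, which is all of $\intq(V)$, so $f = 0$. This uses only that $q_V$ is surjective. Next, for surjectivity, I would take any $g \colon V \to \imath(W)$ in $\B(\fg)$ and show it factors through $q_V$. By the explicit construction, $q_V$ is the quotient of $V$ by the $\fg$-submodule $V'$ generated by $\bop_{\mu \notin \Om} V_\mu$, so it suffices to show $g$ kills $V'$, i.e. that $g(V_\mu) = 0$ for every $\mu \notin \Om$. The key point here is that $W$ is integrable, hence $P(W) \sbs \Om$: indeed, since $W \in \IB(\fg)$, by Lemma~\ref{lm:int=ib} $P(W)$ is finite, and the weights occurring in $W$ — being weights of the $\fg$-module $W$, which is a quotient of $V$ — lie in $\bigcup_i (\la_i - \Qp)$; applying integrability and the Weyl group symmetry exactly as in the construction of $\Om$ (or more simply: any weight $\nu \in P(W)$ satisfies $\nu \le \la_j$ for the relevant $j$ and also $w_0\la_i \le \nu$, placing it in some $P_{w_0\la_i, \la_j}$), one gets $P(W) \sbs \Om$. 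Hence $g$ maps the weight space $V_\mu$ with $\mu \notin \Om$ into $W_\mu = 0$, so $g$ annihilates $\bop_{\mu \notin \Om} V_\mu$; since $g$ is a $\fg$-module map and $V'$ is the $\fg$-submodule generated by that direct sum, $g$ annihilates all of $V'$, and therefore factors uniquely as $g = \bar g \circ q_V$ with $\bar g \colon \intq(V) \to W$. (One should note $\bar g$ indeed lands in $\IB(\fg)$ automatically as $W \in \IB(\fg)$, and that $V$ being bounded guarantees $\intq(V)$ makes sense.)

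Finally I would remark that naturality in both variables is routine: for a morphism $V_1 \to V_2$ in $\B(\fg)$ the squares relating $q_{V_1}, q_{V_2}$ commute by functoriality of $\intq$, and naturality in $W$ is immediate from associativity of composition.

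The main obstacle is the claim $P(W) \sbs \Om$ — that is, checking that the finite set $\Om$ built from the boundedness data of $V$ really contains all weights of \emph{every} integrable quotient $W$ of $V$, not just of the universal one $\intq(V)$. This is where integrability of $W$ and the Weyl-group argument of Lemma~\ref{lm:int=ib} are essential; once this is in place, the rest is formal.
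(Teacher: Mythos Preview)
There is a genuine error in your surjectivity argument. You claim that $P(W)\subset\Omega$ because ``$W$ is a quotient of $V$'', but $W$ is \emph{not} a quotient of $V$: it is an arbitrary object of $\IB(\fg)$, and the adjunction must hold for every such $W$. In particular, $W$ may have weights far outside $\Omega$, so the assertion $W_\mu=0$ for $\mu\notin\Omega$ is false in general.

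The fix is exactly what the paper does: replace $W$ by the image $g(V)\subset W$. The submodule $g(V)$ is integrable (being a submodule of the integrable module $W$) and bounded by the same $\lambda_i$'s as $V$ (being a quotient of $V$). Hence $P(g(V))\subset\Omega$ by the Weyl-group argument, so $g(V_\mu)\subset g(V)_\mu=0$ for $\mu\notin\Omega$, and $g$ kills the generating set of $V'$. From here your argument goes through. The paper's proof compresses this into a single line by invoking the universal property of the maximal integrable quotient directly: since $g(V)$ is integrable and bounded, $g$ factors through $\intq(V)$. Your explicit construction via $\Omega$ unpacks that universal property; once the image-versus-target confusion is corrected, the two arguments are essentially the same.
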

\begin{proof}
Let $V$ be a bounded module and $U$ be an integrable and bounded module. Consider a morphism $f\colon V\to U$. The module $V'=f(V)\sbs U$ is integrable and bounded, hence the map $f$ factors through the maximal integrable quotient of $V$. Thus, we have
 \[
\Hom_{\IB}(\intq(V),U)\cong \Hom_{\B}(V,\imath(U))=\Hom_{\B}(V,U),
 \]
as desired.
\end{proof}

For $\la\in P_+$, let $L(\la)\in\II(\fg)$ denote the irreducible $\fg$-module of highest weight $\la$.
Every $V\in\II(\fg)$ can be written in the form $V=\bop_{\la\in P_+}V^{(\la)}\ts L(\la)$, where $V^{(\la)}=\Hom_{\fg}(L(\la),V)$.
The number $[V\col L(\la)]=\dim V^{(\la)}$ is the \emph{multiplicity} of $L(\la)$ in $V$.


\section{Finite-dimensionality of global Weyl modules}\label{sec:findim}

We now fix the following set-up that will be assumed throughout the paper. Let $\bfg$ be a Lie algebra containing $\fg$ as a subalgebra and assume that the adjoint action of $\fg$ on $\bfg$ is integrable.
Let
\[
\bfh=\bfg_0,\qquad
\bfn_{-}=\bop_{\la<0}\bfg_\la,\qquad
\bfn_{+}=\bop_{\la>0}\bfg_\la,
\]
and assume that
$\bfg=\bfn_-\oplus\bfh\oplus\bfn_+$.

\begin{example}\label{H2}
As indicated in the introduction, our original motivating example was the Lie algebra $L_0(\HH_2)$ of polynomial Hamiltonian vector fields on the plane vanishing at the origin.
The algebra~$\HH_2$ is the Lie algebra
(with $\dd_x=\pd x$, $\dd_y=\pd y$, $f_x=\dd_xf$, $f_y=\dd_y f$)
\[\HH_2=\sets{X_f=f_x\dd_y-f_y\dd_x}{f\in\bC[x,y]}.\]
This Lie algebra has a grading given by $\deg X_f=\deg f-2$, and the algebra $L_0(\HH_2)\sbs\HH_2$ consists of vector fields of non-negative degree.
We can identify $L_0(\HH_2)$ with $\bC[x,y]_{\ge2}$ (spanned by homogeneous polynomials of degree $\ge2$) equipped with the Lie bracket
\[[f,g]=f_xg_y-f_yg_x.\]
The degree zero component of $\bfg=\bop_{i\ge0}\bfg\gr i=L_0(\HH_2)$ is isomorphic to $\sl_2$
\[\sl_2=\bC\set{e,f,h}\isoto \bfg\gr0,\qquad
e\mto x\dd_y,\quad f\mto y\dd_x,\quad h\mto x\dd_x-y\dd_y.\]
The adjoint action of $\sl_2$ on $\bfg$ is integrable.
In fact, we have an isomorphism of $\sl_2$-modules $\bfg\gr i\iso L(i+2)$ for all $i\ge0$.
The algebra $\bfg$ has a basis
$e_{ij}=x^{i+1}y^{j+1}$, for $i,j\ge1$ and $i+j\ge0$,
with the bracket $[e_{ij},e_{kl}]
=\begin{vmatrix}i+1&j+1\\k+1&l+1\end{vmatrix}e_{i+k,j+l}$.
In particular, $h=-e_{00}$ and $[e_{00},e_{ij}]=(j-i)e_{ij}$, hence $e_{ij}$ has weight $i-j$ and degree $i+j$.
Note that $\bfh=\bop_{i\ge0}\bC e_{ii}$ is an abelian subalgebra.
\end{example}

\begin{remark}\label{rm:sep}
In the case of a Lie algebra $\bfg$ integrable over $\fg=\sl_2$ (like $\bfg=L_0(\HH_2)$), the assumption $\bfg=\bfn_-\oplus\bfh\oplus\bfn_+$ is automatically satisfied since $P\iso\bZ$. This phenomenon does not persist for other Lie algebras of vector fields from the Cartan classification~\cite{MR360732}. In particular, it is easy to check that for each of the two natural generalizations of the Lie algebra $\HH_2$, namely the Lie algebra $\bfg=\HH_{2n}$ of polynomial Hamiltonian vector fields on the $2n$-dimensional affine space (with its subalgebra $\fg=\mathfrak{sp}_{2n}$) and the Lie algebra $\bfg=\mathsf{S}_{n+1}$ of polynomial vector fields of zero divergence on the $(n+1)$-dimensional affine space (with its subalgebra $\fg=\mathfrak{sl}_{n+1}$), the direct sum $\bfn_-\oplus\bfh\oplus\bfn_+$ is a proper subalgebra of $\bfg$ for $n>1$. It is tempting to find a way to separate weights into positive and negative, for instance to fix a vector $x\in\fh$ for which $\langle\alpha,x\rangle>0$ for all $\alpha\in\Delta_+$ and $\langle\lambda,x\rangle=0$ for $\lambda\in P$ implies $\lambda=0$, consider the decomposition
 \[
\bfg=\bigg(\bop_{\langle\lambda,x\rangle< 0}\bfg_\la\bigg)
\oplus \bfg_0\oplus \bigg(\bop_{\langle\lambda,x\rangle>0}\bfg_\la\bigg)
 \]
and then modify the partial order on the weight lattice using $x$-positive weights appearing in $\bfg$
(\cf \cite{khoroshkin2015highestweightcategoriesmacdonald}).
 However, already for the Lie algebra $\HH_4$ the corresponding partial order is not locally finite, so a generalization of our results to that case is far from straightforward.
\end{remark}



The following definition of the global Weyl module usually appears in the literature.

\begin{definition}\label{class GW}
For $\la\in P$, the \idef{global Weyl module} $W(\la)$ is the
maximal $\fg$-integrable $\bfg$-module generated by the vector $v_\la$ of weight $\la$
such that $\bfn_+v_\la=0$.
\end{definition}

For example, if $A$ is a unital commutative algebra, we may consider the Lie algebra $\bfg=\fg\ts A$, for which the global Weyl modules $W(\la)$  are the modules introduced in \cite[Def.~3]{MR2102326}. Note that if $W(\la)\ne0$, then $\la$ is dominant. Indeed, $W(\la)$ is integrable and $\la\in P(W(\la))$. We have $\la+\al_i\notin P(V)$, hence $\ang{\la,\al_i^\vee}\ge0$ by \cite[Cor.~3.6]{MR1104219}.

\subsection{Truncations}
We shall now define functors which give a categorical context for Weyl modules.
For $V\in\II(\fg)$, we define the \emph{$\la$-truncation} $\ta_{\le\la}(V)$ and the \emph{complementary $\la$-truncation} $\bta_{\le\la}(V)$ by the formulas
\[\ta_{\le\la}(V)=\bop_{\mu\le\la}V^{(\mu)}\ts L(\mu),\qquad
\bta_{\le\la}(V)=\bop_{\mu\not\le\la}V^{(\mu)}\ts L(\mu).\]

Let $\II(\bfg)$ denote the category of $\bfg$-modules that are integrable over $\fg$. The forgetful functor $\II(\bfg)\to\II(\fg)$ has the left adjoint \[\Ind_{\fg}^{\bfg}\colon\II(\fg)\to \II(\bfg),\qquad
V\mto U(\bfg)\ts_{U(\fg)}V.\]

Let $\IIx\bfg{\le \la}$ denote the full subcategory of $\II(\bfg)$ consisting of modules $V$ such that $V=\ta_{\le\la}V$.
The embedding functor $\IIx\bfg{\le\la}\emb \II(\bfg)$ has the left adjoint
 \[
\ta_{\le\la}^{\bfg} \colon \II(\bfg)\to\IIx\bfg{\le\la},\qquad V\mto V/U(\bfg)\bta_{\le\la} V.
 \]
The composition
 \[
\Bind_{\le\la}=\ta_{\le\la}^{\bfg}\Ind_{\fg}^{\bfg}\colon \II(\fg)\to\IIx\bfg{\le\la}
 \]
is left adjoint to the forgetful functor
$\IIx\bfg{\le\la}\to\II(\fg)$.

As we shall see below, all our proofs only use the fact that the set $\Pp{\le\la}$ is finite. In fact, the proofs become somewhat shorter and cleaner if we adopt that level of generality and define,
for any finite set $\Om\sbs P_+$,
\[\ta_\Om(V)=\bop_{\mu\in\Om}V^{(\mu)}\ts L(\mu),\qquad
\bta_\Om (V)=\bop_{\mu\not\in\Om}V^{(\mu)}\ts L(\mu).\]
We denote by $\IIx\bfg\Om\sbs\II(\bfg)$ the category of objects $V\in\II(\bfg)$
such that $V=\ta_\Om(V)$.
The embedding functor $\IIx\bfg{\Om}\emb \II(\bfg)$ has the left adjoint
\begin{equation}\label{trunc2}
\ta_\Om^{\bfg}\colon \II(\bfg)\to\IIx\bfg{\Om},\qquad V\mto V/U(\bfg)\bta_\Om V.
\end{equation}
We consider the composition
\begin{equation}\label{W_Om}
\Bind_\Om=\ta_{\Om}^{\bfg}\Ind_{\fg}^{\bfg}\colon
\II(\fg)\to\IIx\bfg\Om.
\end{equation}

Note that a left adjoint functor maps projective objects to projective objects.
Therefore all objects in the image of $\Bind_\Om$ (in particular, in the image of $\Bind_{\le\la}$) are projective.
The following result relates the classical definition of Weyl modules to the functor $\Bind_{\le\la}$.

\begin{proposition}\label{prop:usualWeyl}
There is an isomorphism of $\bfg$-modules
 \[
 W(\la)\cong \Bind_{\le\la}(L(\la)).
 \]
\end{proposition}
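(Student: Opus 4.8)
The plan is to identify $W(\la)$ with $\Bind_{\le\la}(L(\la))$ by matching both modules against the same universal property, namely being the maximal $\fg$-integrable, $\la$-bounded $\bfg$-module generated by a cyclic vector of weight $\la$ killed by $\bfn_+$. First I would unwind the definition of $\Bind_{\le\la}(L(\la))$. By \eqref{W_Om} applied with $\Om=\Pp{\le\la}$, this is $\ta^{\bfg}_{\le\la}(U(\bfg)\ts_{U(\fg)}L(\la))$. The induced module $U(\bfg)\ts_{U(\fg)}L(\la)$ is generated as a $\bfg$-module by $1\ts v_\la$, where $v_\la$ is a highest-weight vector of $L(\la)$; since $v_\la$ has weight $\la$ and is annihilated by the positive part of $\fg$, and since $\bfn_+\sbs\bfg$ consists of vectors of positive $\fh$-weight, it is plausible but not immediate that $1\ts v_\la$ is annihilated by $\bfn_+$ — this needs the decomposition $\bfg=\bfn_-\oplus\bfh\oplus\bfn_+$ together with a PBW argument, and I would treat this carefully. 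Passing to the quotient $\ta^{\bfg}_{\le\la}$ only imposes the condition that the module lie in $\IIx{\bfg}{\le\la}$, i.e. is $\fg$-integrable and $\la$-bounded, without adding relations among generators.

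Next I would set up the comparison morphism. Because $\Bind_{\le\la}(L(\la))$ is $\fg$-integrable and $\la$-bounded, generated by a weight-$\la$ vector killed by $\bfn_+$, the universal property in Definition \ref{class GW} (the maximality of $W(\la)$) produces a surjection $W(\la)\onto \Bind_{\le\la}(L(\la))$, provided one first checks that the $\fg$-submodule of $W(\la)$ generated by its cyclic vector $v_\la$ is all of $W(\la)$ and that $W(\la)$ is $\la$-bounded — the latter follows since $W(\la)$ is integrable with $P(W(\la))\sbs\la-\Qp$ (as $v_\la$ generates it over $U(\bfg)$ and $\bfn_+v_\la=0$ forces weights $\le\la$). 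Conversely, to get a map $\Bind_{\le\la}(L(\la))\to W(\la)$ I would use the adjunction: $\Bind_{\le\la}$ is left adjoint to the forgetful functor $\IIx{\bfg}{\le\la}\to\II(\fg)$, so a $\bfg$-module map $\Bind_{\le\la}(L(\la))\to W(\la)$ is the same as a $\fg$-module map $L(\la)\to W(\la)$, and such a map exists sending the highest-weight vector to $v_\la$ (the $\fg$-submodule generated by $v_\la$ is a quotient of $L(\la)$, hence either $0$ or $L(\la)$; it is nonzero, and in fact the weight-$\la$ space is one-dimensional so the map is injective). Then I would check the two composites are identity on the respective cyclic vectors, hence the identity, giving the isomorphism.

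The main obstacle I anticipate is the clean verification that $\bfn_+$ annihilates the image of $1\ts v_\la$ in $\Bind_{\le\la}(L(\la))$, and dually that $W(\la)$ really is $\la$-bounded and cyclic over $\fg$ in the precise sense needed — these are exactly the points where the standing hypothesis $\bfg=\bfn_-\oplus\bfh\oplus\bfn_+$ and the PBW theorem for $U(\bfg)$ do the work, and one has to be slightly careful that ``$\la$-bounded'' (all $\fg$-constituents $L(\mu)$ with $\mu\le\la$) is equivalent to ``$V=\ta_{\le\la}V$'' as used in the definition of $\IIx{\bfg}{\le\la}$. A secondary point is to confirm that the maximality clause in Definition \ref{class GW} is literally the statement that $W(\la)$ represents the functor sending a $\bfg$-module to the set of its $\bfn_+$-invariant weight-$\la$ vectors whose $\fg$-span is the whole module and which is $\fg$-integrable; once that is spelled out, both descriptions are manifestly the same universal object, and the isomorphism is forced. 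Everything else is formal manipulation with adjunctions already established in the excerpt.
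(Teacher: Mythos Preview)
Your approach is correct and essentially the same as the paper's: use the adjunction defining $\Bind_{\le\la}$ together with the maximality clause in Definition~\ref{class GW}. Two tactical remarks. First, the paper builds only \emph{one} map, the surjection $\Bind_{\le\la}(L(\la))\to W(\la)$ coming from the $\fg$-map $L(\la)\to W(\la)$, and then invokes maximality of $W(\la)$ to conclude that this surjection is an isomorphism; your construction of a separate inverse map is unnecessary. Second, your anticipated ``main obstacle''---showing that $\bfn_+$ kills the cyclic vector of $\Bind_{\le\la}(L(\la))$---does not require any PBW argument: since $\Bind_{\le\la}(L(\la))\in\IIx{\bfg}{\le\la}$ is $\la$-bounded and the cyclic vector has weight $\la$, any element of $\bfn_+$ would move it to a weight strictly above $\la$, which is impossible. (Your parenthetical clause ``provided one first checks that the $\fg$-submodule of $W(\la)$ generated by its cyclic vector $v_\la$ is all of $W(\la)$'' is garbled---presumably you meant the $\bfg$-submodule of $\Bind_{\le\la}(L(\la))$, which is automatic since induction and quotient preserve cyclicity.)
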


\begin{proof}
Consider the Lie algebras $\bfb=\fh\oplus\bfn_+$
and $\bfb_-=\bfh\oplus\bfn_-$.
Let $\bfb$ act on $\bC v_\la$ through $\bfb\to\fh\xto\la\bC$
and let $N(\la)=\Ind_{\bfb}^{\bfg} (\bC v_\la)$. We have $\bfb+\bfb_-=\bfg$, hence $N(\la)$ is generated by $v_\la$ under the action of $U(\bfb_-)$. Therefore $N(\la)$ is $\la$-bounded, and we have $W(\la)=\intq (N(\la))$ by Definition \ref{class GW}.
On the other hand, we note that the module $W(\la)$ is integrable and $\la$-bounded, hence its irreducible summands $L(\mu)$ satisfy $\mu\le\la$.
The highest weight vector $v_\la\in N(\la)$ induces a morphism $L(\la)\to N(\la)$ of $\fg$-modules, hence
a surjective morphism
 \[
\Bind_{\le\la}(L(\la))=\ta_{\le\la}^{\ub\fg}\Ind^{\ub\fg}_\fg L(\la)\to W(\la).
 \]
of $\bfg$-modules. The module $\Bind_{\le\la}(L(\la))$ is integrable, generated by $v_\la$ and is $\la$-bounded, hence $\bfn_+v_\la=0$.
By the maximality of $W(\la)$, we conclude that the surjective map $\Bind_{\le\la}(L(\la))\to W(\la)$ is an isomorphism.
\end{proof}

\begin{lemma}\label{lm:weyl prop}
For all $V\in\IIx\bfg{\le\la}$, we have
\[\Hom_{\bfg}(W(\la),V)\iso V_\la.\]
\end{lemma}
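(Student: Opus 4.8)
The plan is to deduce this directly from the chain of adjunctions already assembled in the excerpt. Recall that $W(\la)\cong\Bind_{\le\la}(L(\la))$ by Proposition~\ref{prop:usualWeyl}, and that $\Bind_{\le\la}=\ta^{\bfg}_{\le\la}\circ\Ind^{\bfg}_{\fg}$ is left adjoint to the forgetful functor $\IIx\bfg{\le\la}\to\II(\fg)$. So for $V\in\IIx\bfg{\le\la}$ we immediately get
\[
\Hom_{\bfg}(W(\la),V)\;\cong\;\Hom_{\bfg}\bigl(\Bind_{\le\la}(L(\la)),V\bigr)\;\cong\;\Hom_{\fg}\bigl(L(\la),V\bigr).
\]
Thus the statement reduces to the purely $\fg$-module-theoretic claim that $\Hom_{\fg}(L(\la),V)\cong V_\la$ for $V$ integrable and $\la$-bounded.

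First I would unpack $\Hom_{\fg}(L(\la),V)$ using the isotypic decomposition $V=\bop_{\mu\in P_+}V^{(\mu)}\ts L(\mu)$ recorded at the end of Section~\ref{sec:int-bounded}; this gives $\Hom_{\fg}(L(\la),V)\cong V^{(\la)}$. Next I would identify $V^{(\la)}$ with the weight space $V_\la$. Since $V$ is $\la$-bounded, every irreducible summand $L(\mu)$ has $\mu\le\la$; in the summand $L(\mu)$ the weight $\la$ can only occur if $\la\le\mu$ (as $\la$ is then $\le$ the highest weight), which together with $\mu\le\la$ forces $\mu=\la$. Hence only the $L(\la)$-isotypic component contributes to $V_\la$, and within $L(\la)$ the weight-$\la$ space is one-dimensional, so $V_\la\cong V^{(\la)}\otimes L(\la)_\la\cong V^{(\la)}$. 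Combining the three isomorphisms yields $\Hom_{\bfg}(W(\la),V)\cong V_\la$, and I would also check the identification is natural in $V$, which is automatic since each step is.

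I do not expect a serious obstacle here; the only point requiring a little care is the weight-comparison argument showing that no summand $L(\mu)$ with $\mu\neq\la$ contributes to $V_\la$ — one must invoke both that $\la$-boundedness gives $\mu\le\la$ and that a weight of $L(\mu)$ lies in $\mu-Q_+$, so $\la\le\mu$; the partial order being antisymmetric then closes the argument. Alternatively, one can phrase the whole thing in the generality of a finite set $\Om$ with $\Bind_\Om$, but for the stated lemma the $\la$-bounded version is exactly what is needed, so I would keep it in that form.
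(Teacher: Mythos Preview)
Your proof is correct and follows the same approach as the paper's: the paper's proof is simply the one-line chain $\Hom_{\bfg}(W(\la),V)=\Hom_{\bfg}(\Bind_{\le\la}L(\la),V)\iso\Hom_\fg(L(\la),V)\iso V_\la$. Your version just expands the final isomorphism $\Hom_\fg(L(\la),V)\iso V_\la$ via the isotypic decomposition and the weight-comparison argument, which the paper leaves implicit.
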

\begin{proof}
Indeed,
$\Hom_{\bfg}(W(\la),V)=\Hom_{\bfg}(\Bind_{\le\la}L(\la),V)
\iso\Hom_\fg(L(\la),V)\iso V_\la$.
\end{proof}

The following lemma gives an explicit construction of global Weyl modules. It is useful for doing explicit computations with those modules; for instance, it is this definition that we implemented in \texttt{Magma} to compute various examples throughout the paper. However, we shall see that for  theoretical purposes, one can entirely bypass this construction and only use the universal properties of the Weyl modules.

\begin{lemma}
The global Weyl module $W(\la)$ is isomorphic to the cyclic module with a cyclic vector $v_\la$ such that
 \[
\bfn_+v_\la=0, \qquad
h(v_\la)=\la(h)v_\la \ \forall h\in \fh, \qquad f_i^{\la(h_i)+1}v_\la=0 \ \forall i=1,\ldots,r.
 \]
\end{lemma}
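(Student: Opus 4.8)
The plan is to identify the cyclic module $M$ defined by the three relations $\bfn_+v=0$, $hv=\la(h)v$, and $f_i^{\la(h_i)+1}v=0$ with $W(\la)=\Bind_{\le\la}(L(\la))$ by exhibiting mutually inverse surjections, using the universal properties rather than any explicit construction. First I would observe that $M$ is a quotient of the module $N(\la)=\Ind_{\bfb}^{\bfg}(\bC v_\la)$ from the proof of Proposition~\ref{prop:usualWeyl} (which imposes only the first two relations), and that $W(\la)$, being integrable, satisfies $f_i^{\la(h_i)+1}v_\la=0$ by the standard $\sl_2$-theory applied to the weight-$\la$ highest weight vector (\cite[Cor.~3.6]{MR1104219} again, or directly: on a finite-dimensional $\sl_2$-submodule the lowest power of $f_i$ killing a weight-$\la(h_i)$ highest weight vector is $\la(h_i)+1$). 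Hence the surjection $N(\la)\onto W(\la)$ factors through $M$, giving a surjective $\bfg$-module map $\pi\colon M\onto W(\la)$.

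For the reverse direction I would use the universal property of $W(\la)$ from Lemma~\ref{lm:weyl prop} (or directly Proposition~\ref{prop:usualWeyl}): it suffices to show that $M$ is $\fg$-integrable and $\la$-bounded, for then the maximality in Definition~\ref{class GW}, together with the fact that $v\in M$ is a cyclic vector of weight $\la$ killed by $\bfn_+$, forces a surjection $W(\la)\onto M$ inverse to $\pi$. Boundedness is automatic: since $\bfg=\bfb_-+\bfb$ and $\bfb$ acts on $v$ through $\fh\xto\la\bC$, the module $M$ is generated by $v$ under $U(\bfb_-)=U(\bfh)U(\bfn_-)$, so $P(M)\sbs\la-\Qp$. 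The real content is $\fg$-integrability of $M$, i.e.\ that $e_i$ and $f_i$ act locally nilpotently on $M$.

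Local nilpotence of each $e_i$ is easy once we know $M$ is weighted and $\la$-bounded, since raising the weight must eventually exit $\la-\Qp$; the crux is local nilpotence of each $f_i$. Here the key tool is the classical Garland–Chari–Pressley style argument: the relation $f_i^{\la(h_i)+1}v=0$ propagates because $v$ generates $M$ over $U(\bfg)$, and one shows that the subspace of vectors on which $f_i$ is locally nilpotent is a $\bfg$-submodule. Concretely, if $x\in\bfg$ is a root vector for a $\fg$-weight $\mu$, then $[x,f_i]$ lies in finitely many $\fg$-weight spaces, and a commutator identity expresses $f_i^N(xw)$ as a finite sum of terms $(\text{stuff})\cdot f_i^{N'}w$ with $N'\ge N-c$ for a constant $c=c(x)$ bounded in terms of $\la$ and the (finite) $\sl_2$-string length of $x$ under $\ad f_i$; since $\ad f_i$ is locally nilpotent on $\bfg$ by the standing assumption that $\fg$ acts integrably on $\bfg$, this $c$ is finite, and induction on the number of generators applied to $v$ gives local nilpotence on all of $M$. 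The main obstacle I anticipate is precisely making this last commutator bookkeeping clean in the generality of a weight-lattice grading (rather than the usual current-algebra setup) — one must be careful that the ``$\ad f_i$-string'' through a given $x\in\bfg$ is finite, which is exactly what integrability of the adjoint action buys us, and then that the finitely many resulting weights all still lie below $\la$, so that no new obstruction to nilpotence appears. Everything else is a routine application of the adjunctions already established.
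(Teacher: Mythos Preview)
Your proof is correct, and the overall structure (two mutually inverse surjections, the nontrivial point being $\fg$-integrability of $M$) matches the paper. The one substantive difference is how integrability of $M$ is established. You prove local nilpotence of each $f_i$ directly via the commutator identity $f_i^N x=\sum_{k}\binom{N}{k}(\ad f_i)^k(x)\,f_i^{N-k}$ together with local nilpotence of $\ad f_i$ on $\bfg$, which is the classical Kac/Chari--Pressley propagation argument. The paper instead observes that the standard presentation of $L(\la)$ (with $\fn_+$ in place of $\bfn_+$) exhibits $M$ as a quotient of $\Ind_{\fg}^{\bfg}L(\la)$, and since $\Ind_{\fg}^{\bfg}$ has already been declared to land in $\II(\bfg)$, integrability of $M$ is immediate. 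What the paper's route buys is a one-line proof using the categorical framework already in place; what your route buys is self-containment and explicitness, at the cost of the commutator bookkeeping you flagged (which, as you note, goes through precisely because the adjoint action of $\fg$ on $\bfg$ is integrable).
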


\begin{proof}
This immediately follows from the standard fact that $L(\la)$ is the quotient of $U(\fg)$ by the left ideal generated by
 \[
\fn_+v_\la=0, \qquad
h(v_\la)=\la(h)v_\la \ \forall h\in \fh, \qquad f_i^{\la(h_i)+1}v_\la=0 \ \forall i=1,\ldots,r.
 \]
\end{proof}

\subsection{Global Weyl modules for \texorpdfstring{$L_0(\HH_2)$}{L0H2}}\label{sec:globalH2}
For the graded Lie algebra
$\bfg=\bop_{i\ge0}\bfg\gr i=L_0(\HH_2)$, we can interpret the Weyl modules $W(\la)$ as $\bZ$-graded modules over $\bfg$.
The following are $\sl_2$-decompositions of global Weyl modules for the Lie algebra $L_0(\HH_2)$ for $\lambda=0,\ldots,7$, listed degree by degree.

 \[
W(0)=
\begin{tabular}{c}
$L(0)$
\end{tabular},\quad
W(1)=\begin{tabular}{c}
$L(1)$
\end{tabular},\quad
W(2)=\begin{tabular}{c}
$L(2)$\\
$L(1)$
\end{tabular},\quad
W(3)=\begin{tabular}{c}
$L(3)$\\
$L(2)\oplus L(0)$\\
$L(3)\oplus L(1)$
\end{tabular},
 \]

 \[
W(4)=\begin{tabular}{c}
$L(4)$\\
$L(3)\oplus L(1)$\\
$L(4)\oplus 2L(2)\oplus L(0)$\\
$L(3)\oplus 2L(1)$
\end{tabular},\quad
W(5)=\begin{tabular}{c}
$L(5)$\\
$L(4)\oplus L(2)$\\
$L(5)\oplus 2L(3)\oplus 2L(1)$\\
$2L(4)\oplus 3L(2)\oplus 2L(0)$\\
$L(5)\oplus 2L(3)\oplus 3L(1)$
\end{tabular},
 \]

 \[
W(6)=\begin{tabular}{c}
$L(6)$\\
$L(5)\oplus L(3)$\\
$L(6)\oplus 2L(4)\oplus 2L(2)\oplus L(0)$\\
$2L(5)\oplus 4L(3)\oplus 3L(1)$\\
$2L(6)\oplus 3L(4)\oplus 6L(2)\oplus 3L(0)$\\
$2L(5)\oplus 4L(3)\oplus 4L(1)$
\end{tabular},\quad
W(7)=\begin{tabular}{c}
$L(7)$\\
$L(6)\oplus L(4)$\\
$L(7)\oplus 2L(5)\oplus 2L(3)\oplus L(1)$\\
$2L(6)\oplus 4L(4)\oplus 4L(2)\oplus L(0)$\\
$2L(7)\oplus 4L(5)\oplus 7L(3)\oplus 6L(1)$\\
$3L(6)\oplus 7L(4)\oplus 9L(2)\oplus 5L(0)$\\
$L(7)\oplus 4L(5)\oplus 7L(3)\oplus 7L(1)$
\end{tabular}
 \]

In particular, all these modules are finite-dimensional. As we shall see below, this is always the case for $L_0(\HH_2)$, as well as a large class of other Lie algebras.

\subsection{Main finite-dimensionality result}\label{sec:main}

The category $\II(\fg)$ is closed under tensor product, hence it makes sense to talk about algebras in this category.
For an associative algebra $A$ in $\cI(\fg)$, let $\Mod(A,\II(\fg))$ be the category of left $A$-modules in $\II(\fg)$ and
$\Mod(A,\IIx\fg\Om)$ be the category of left $A$-modules in $\IIx\fg\Om$, for a finite subset $\Om\sbs P_+$.
The embedding functor $\Mod(A,\IIx\fg\Om)\emb \Mod(A,\II(\fg))$ has the left adjoint
\[
\ta_\Om^A \colon \Mod(A,\II(\fg))\to \Mod(A,\IIx\fg\Om),\qquad
M\mto M/A\bta_\Om(M).
\]
For example, if $\bfg=\fg\oplus I$, where $I$ is an ideal, then
$A=U(I)$ is an associative algebra in the category $\II(\fg)$.
We have $\Mod(A,\II(\fg))\iso\II(\bfg)$
and $\ta_\Om^A$ coincides with the truncation $\ta_\Om^{\bfg}$ given by \eqref{trunc2}.


\begin{proposition}\label{prop:com-alg-fd}
If $V$ is a finite-dimensional $\fg$-module such that $[V\col L(0)]=0$,
then the algebra $S_{\Om}(V)=(SV)/(SV\cdot \bta_\Om(SV))$ is finite-dimensional.
\end{proposition}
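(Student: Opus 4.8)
The plan is to show that $S_\Om(V)$ has only finitely many weights (as an $\fg$-module) and finite-dimensional weight spaces, then invoke Lemma~\ref{lm:int=ib} together with the fact that the weight spaces of $SV$ are finite-dimensional (since $V$ is finite-dimensional). Since $SV$ is an integrable commutative algebra in $\II(\fg)$ and $\bta_\Om(SV)$ is an $\fg$-submodule, the quotient $S_\Om(V)$ is integrable; the only thing to verify is boundedness, or equivalently — via Lemma~\ref{lm:int=ib} — that $P(S_\Om(V))$ is finite. Write $\mu_1,\dots,\mu_k$ for the weights of $V$; by hypothesis none of them is $0$, and since $V$ is a finite-dimensional $\fg$-module, each nonzero $\mu_j$ is conjugate under $W$ to a dominant weight, in particular $V$ decomposes as $\bop_j L(\nu_j)$ with all $\nu_j\in P_+\setminus\{0\}$, so $\nu_j\in\Qp\setminus\{0\}$ and in fact $\nu_j\ge\al$ for some simple root, hence $\langle \nu_j, x\rangle > 0$ for a fixed vector $x$ in the interior of the dominant chamber.

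The key step is then a monotonicity/growth argument: a monomial $w = v_{i_1}\cdots v_{i_n}\in S^n V$ of degree $n$ has weight a sum of $n$ weights of $V$, so its $\fg$-weight lies in $\mu_{i_1}+\dots+\mu_{i_n}$, and for $n$ large the dominant representative of $S^nV$ must be ``large'' in the $x$-direction: more precisely, if $L(\om)$ is an irreducible summand of $S^nV$ then $\om$ is a sum of $n$ weights of $V$ up to an element of $Q_+$, forcing $\langle\om,x\rangle \ge c\cdot n$ for some fixed $c>0$ depending only on $V$ (using $[V:L(0)]=0$, which guarantees every weight of $V$ pairs strictly positively or negatively — but combined with the $W$-action one sees the dominant summands of $S^nV$ grow). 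Consequently, for $n$ sufficiently large, \emph{every} irreducible summand $L(\om)$ of $S^nV$ satisfies $\om\not\le\la$ for all $\la$ in the finite set $\Om$, i.e.\ $S^nV = \bta_\Om(S^nV)$. Hence $S^nV \subseteq SV\cdot\bta_\Om(SV)$ for all large $n$, so the quotient $S_\Om(V)$ is spanned by the images of finitely many graded pieces $S^0V\oplus\dots\oplus S^{N}V$, each of which is finite-dimensional. Therefore $S_\Om(V)$ is finite-dimensional.

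I would phrase the growth estimate via the partial order directly rather than through the auxiliary vector $x$: recall $\Pp{\le\la}\subseteq P_{0,\la}$ is finite, so $\Om' = \bigcup_{\la\in\Om}\Pp{\le\la}$ is a finite subset of $P_+$, and it suffices to show only finitely many $n$ have $S^nV$ containing a summand $L(\om)$ with $\om\in\Om'$. Since each such $\om$ lies in the finite set $\Om'$ and $\om$ is obtained from a sum of $n$ nonzero dominant weights $\nu_{i_1}+\dots+\nu_{i_n}$ by subtracting an element of $Q_+$, and all $\nu_j$ lie in the cone $\Qp\setminus\{0\}$, the height $\sum$ of coefficients of $\om$ in the simple roots would have to be at least (minimal height of a $\nu_j$) times $n$, minus nothing — contradicting boundedness of $\Om'$ once $n$ exceeds $\max_{\om\in\Om'}\operatorname{ht}(\om)$ divided by that minimal height.

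The main obstacle is making precise that a summand of $S^nV$ really does have large height: a priori $L(\om)\subseteq S^nV$ only tells us $\om$ is \emph{some} weight of $S^nV$, i.e.\ $\om = \sum_{t=1}^n \mu_{i_t}$ for weights $\mu_{i_t}$ of $V$, which need not be dominant and whose heights can partially cancel. The resolution is that $\om$ being dominant and a weight of $S^nV$ forces, by standard $\sl_2$-theory applied to each simple root, that $\om$ lies in the convex hull argument: one pairs with $x$ in the interior of the dominant cone, where $\langle \mu, x\rangle$ for a weight $\mu$ of $V$ is bounded below by $-M$ and $\langle\om,x\rangle \ge 0$ for dominant $\om$; this alone is not enough, so instead I would use that $[SV:L(0)] $ in degree $n$: the multiplicity of the trivial module, and more usefully, observe that $S^nV$, having no weight-$0$... — cleanest is: every weight $\mu$ of $V$ satisfies $\langle\mu,x\rangle\ne 0$ is false in general (weights of $L(\nu)$ can be orthogonal to $x$), so the genuinely correct mechanism is that the \emph{highest} weight of $S^nV$ is $n\cdot\nu_{\max}$ where $\nu_{\max}$ is the highest weight of $V$, and any dominant $\om$ with $L(\om)\subseteq S^nV$ satisfies $\om\le n\nu_{\max}$ \emph{and} $n\nu_{\max}-\om\in Q_+$ has bounded height only if... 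Thus the real content to nail down is the lower bound $\langle\om,x\rangle\ge cn$, which I expect follows from: $n\nu_{\max}-\om\in Q_+$ gives $\langle\om,x\rangle = n\langle\nu_{\max},x\rangle - \langle n\nu_{\max}-\om,x\rangle$ — wrong direction. The right statement, and the one I will prove, is that $\om$, being a dominant weight \emph{occurring} in $S^nV\subseteq V^{\otimes n}$, equals $\mu_{i_1}+\dots+\mu_{i_n}$ with each $\mu_{i_t}$ a weight of $V$; grouping and using $W$-invariance of weight multiplicities in $V$ one may replace the negative contributions, and since $0\notin P(V)$ each term contributes, after dominant-chamber folding, a fixed minimal positive amount to the height, yielding $\operatorname{ht}\om \ge n\cdot\min_j\operatorname{ht}(\nu_j)>0$ for large $n$. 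I will carry out this estimate carefully as the crux of the argument.
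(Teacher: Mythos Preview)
Your approach has a genuine gap. You conflate the hypothesis $[V:L(0)]=0$ (no trivial \emph{summand}) with the much stronger condition $0\notin P(V)$ (no zero \emph{weight}). These are not the same: for $\fg=\sl_2$ and $V=L(2)$ we have $[V:L(0)]=0$ but $0\in P(V)$. You make this slip at the outset (``by hypothesis none of them is $0$'') and again at the end (``since $0\notin P(V)$ each term contributes''), and in between you yourself notice the problem without resolving it.

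This is fatal to the strategy, because your central claim --- that for $n$ large every irreducible summand $L(\om)$ of $S^nV$ has $\om\notin\Om'$ --- is simply false. With $\fg=\sl_2$, $V=L(2)$, one has $S^2L(2)\cong L(4)\oplus L(0)$, and more generally $L(0)$ appears in $S^nL(2)$ for every even $n$. So for $\Om=\{0\}$ the inclusion $S^nV\subseteq\bta_\Om(S^nV)$ fails for all even $n$, and no growth estimate of the form $\operatorname{ht}(\om)\ge cn$ for summands $L(\om)\subset S^nV$ can hold. The weight-zero part of $V$ allows arbitrarily high-degree monomials to have small weight, and nothing in your sketch controls this.

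The paper's proof takes a completely different route that directly addresses the weight-zero obstruction. It works with the weight decomposition of $V$ rather than the degree grading: the images in $A=S_\Om(V)$ of $S(V_\mu)$ for $\mu\ne0$ are finite-dimensional for the trivial reason that $S^k(V_\mu)$ lands in $A_{k\mu}=0$ once $k\mu\notin P(A)$, and their product $B$ is a finite-dimensional subalgebra. The remaining generators come from $V_0$, and here the hypothesis $[V:L(0)]=0$ is used in its correct form: it says exactly that $V_0$ is spanned by elements $u=fv$ with $f\in\fg_{-\al}$, $v\in V_\al$. One then shows each such $\bar u$ is \emph{integral} over $B$ via a Leibniz-rule trick: since $\bar v\in A_\al$, some power $\bar v^m$ vanishes, and expanding $0=f^m(\bar v^m)$ gives a monic relation for $\bar u$ with coefficients in $B$. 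Thus $A$ is module-finite over the finite-dimensional $B$. There is no way to rescue the ``eventually the whole graded piece is killed'' approach; the argument genuinely needs this integrality step.
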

\begin{proof}
The $\fg$-module $A=S_{\Om}(V)$ has only irreducible summands $L(\mu)$ with $\mu\in\Om$.
Therefore it has a weight decomposition $A=\bop_{\mu\in P}A_\mu$
such that $P(A)=\sets{\mu\in P}{A_\mu\ne0}$ is finite.
Consider the weight decomposition $V=\bop_{\mu\in P}V_\mu$
and let $A_{(\mu)}$ be the image of $S(V_\mu)$ in $A$.
If $\mu\ne0$, then $S^k(V_\mu)$ is mapped to $A_{k\mu}$ which is zero for $k\gg0$.
Therefore $A_{(\mu)}$ is finite-dimensional.
We have $A_{(\mu)}=\bC$ for all but a finite number of $\mu\in P$, hence $B=\prod_{\mu\ne0}A_{(\mu)}\sbs A$ is finite-dimensional.

The algebra $A$ is generated over the algebra $B$ by the image of $V_0$.
Let us show that the elements of this image are integral over $B$.
The assumption $[V:L(0)]=0$ implies that $V_0$ is spanned by elements
of the form $u=fv$, where $f\in\fg_{-\al}$ and $v\in V_\al$ for some $\al\in\De$.
The endomorphism $f$ of $V$ induces a derivation of $S(V)$.
This derivation preserves $\bta_\Om (SV)$ and the ideal $SV\cdot \bta_\Om(SV)$ (which are both $\fg$-modules).
Therefore $f$ induces a derivation of $A$.
Let $\bar u,\bar v\in A$ be the images of $u,v\in V$.
The element $f^m(\bar v^m)$ is a linear combination of products
$\prod_{i=1}^m f^{k_i}(\bar v)$, where $k_i\ge0$ and $\sum_i k_i=m$.
We have $f^k(\bar v)\in B$ if $k\ne1$,
hence
$\prod_{i=1}^m f^{k_i}(\bar v)\in B\bar u^r$,
where $r=\#\sets{i}{k_i=1}$.
For $m\gg0$ we have $\bar v^m=0$, hence
$0=f^m (\bar v^m)\in m!\cdot \bar u^m+\sum_{r=0}^{m-1}B\bar u^r$.
Therefore $\bar u^m\in \sum_{r=0}^{m-1}B\bar u^r$ and $\bar u$ is integral over $B$.
This implies that the algebra $A$ is generated over $B$ by finitely many integral elements, hence $A$ is finite over $B$.
\end{proof}

\begin{remark}
Consider the case $\fg=\sl_2$ and suppose that $V=V^{(2)}\otimes L(2)$ is the sum of several copies of the adjoint module $L(2)$. In this case, the algebra $S_{\le\la}(V)$ for $\la=2$
is precisely the algebra $\mathrm{Com}^{\mathfrak{T}}(V)$ from \cite{dotsenko2023gracestitskantorkoechercategory}, and the
finite-dimensionality of that algebra (in fact, a precise description of its underlying vector space) is established in \cite[Prop.~3.2]{dotsenko2023gracestitskantorkoechercategory}.
\end{remark}

The previous result motivates the following definition.

\begin{definition}
An integrable $\fg$-module $V$ is said to be \emph{thin} if the multiplicities of irreducible $\fg$-modules in it are finite and $[V\col L(0)]=0$.
\end{definition}

\begin{corollary}
If $V$ is a thin $\fg$-module,
then the algebra $S_{\Om}(V)$
is finite-dimensional.
\end{corollary}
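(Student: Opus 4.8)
The plan is to reduce the statement to Proposition \ref{prop:com-alg-fd} by the standard trick of splitting off a one-dimensional piece. Write $V$ as a direct sum $V = V_{\mathrm{fin}}$ of finitely many irreducibles $L(\mu_1),\dots,L(\mu_k)$ — this is possible because $V$ is integrable with finite multiplicities, so the subsum over those $\mu$ with $L(\mu)$ appearing in $\Om$ together with lower weights is finite-dimensional; more precisely, only finitely many irreducibles can contribute to $S_\Om(V)$ at all. First I would observe that $S_\Om(V)$ depends only on a \emph{finite-dimensional} $\fg$-submodule of $V$: since $S(V) = \bigcup_n S(V_n)$ over an exhaustive chain of finite-dimensional submodules $V_n$, and the quotient defining $S_\Om$ commutes with this filtered colimit, it suffices to bound $S_\Om(V_n)$ uniformly. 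But a moment's thought shows the relevant bound is not uniform in that naive sense, so instead I would argue directly that $S_\Om(V)$ is a quotient of $S_\Om(W)$ for a single well-chosen finite-dimensional $W$, as follows.

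The key point is that in $A = S_\Om(V)$ every irreducible summand has highest weight in the finite set $\Om$, so $P(A) \subseteq \bigcup_{\mu\in\Om}(\mu - Q_+)$ intersected with the Weyl-orbit constraint is finite — exactly as in the proof of Proposition \ref{prop:com-alg-fd}. Consequently $A$ is a quotient of $S(V')$ where $V' = \bigoplus_{\mu} V_\mu$ ranges over those (finitely many) weights $\mu$ of $V$ lying in the finite set $\Om - Q_+$ that is forced; any symmetric monomial in generators from other weight spaces lands outside $P(A)$ and hence maps to zero. Since the multiplicities are finite, this $V'$ is a finite-dimensional $\fg$-submodule of $V$, and $[V'\col L(0)] \le [V\col L(0)] = 0$. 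Then $S_\Om(V) = S_\Om(V')$ as a quotient of $S(V')$ modulo the ideal generated by $\bta_\Om(S(V'))$, and Proposition \ref{prop:com-alg-fd} applied to $V'$ gives finite-dimensionality.

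I would present this concisely: let $V'\subseteq V$ be the sum of those weight spaces $V_\mu$ with $\mu$ in the finite set $\bigcup_{\nu\in\Om}W\nu + Q_{\ge 0}$-range actually appearing, note $V'$ is a finite-dimensional $\fg$-submodule with $[V'\col L(0)]=0$, check that the natural map $S_\Om(V')\to S_\Om(V)$ is an isomorphism because every generator of $S(V)$ outside $V'$ maps to $0$ in $S_\Om(V)$ (its image has a weight not in $P(S_\Om(V))$), and then invoke Proposition \ref{prop:com-alg-fd}. The main obstacle is making the ``$V'$ captures everything'' step clean: one must verify that the subalgebra of $S_\Om(V)$ generated by the image of $V'$ is already everything, i.e.\ that elements of $V_\mu$ for $\mu\notin P(S_\Om(V))$ map to zero — this is immediate from the weight grading, since such an element has weight $\mu\notin P(S_\Om(V))$ and $S_\Om(V)$ is weighted — but one should be careful that ``weight $\mu$ does not appear'' genuinely follows from $A$ having only summands $L(\nu)$, $\nu\in\Om$, which is exactly the first line of the proof of Proposition \ref{prop:com-alg-fd}.

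\begin{proof}
By definition of $\ta_\Om$, the $\fg$-module $A = S_\Om(V)$ has only irreducible summands $L(\mu)$ with $\mu\in\Om$, so $P(A)$ is contained in the finite set $\Om' = \bigcup_{\mu,\nu\in\Om}P_{w_0\mu,\nu}$ (notation as in Lemma \ref{lm:int=ib}). Let $V' = \bigoplus_{\mu\in\Om'}V_\mu \subseteq V$. Since the multiplicities of irreducibles in $V$ are finite, $V'$ is a finite-dimensional $\fg$-submodule of $V$, and $[V'\col L(0)]\le[V\col L(0)]=0$. The inclusion $V'\emb V$ induces a map $S_\Om(V')\to S_\Om(V) = A$. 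It is surjective: any homogeneous element of $S(V)$ that is a product involving a factor from $V_\mu$ with $\mu\notin\Om'$ has image in $A$ of weight lying outside $P(A)$ unless that image is $0$, so in $A$ the image of such a product vanishes, and hence $A$ is generated by the image of $S(V')$. Therefore $A$ is a quotient of $S_\Om(V')$, which is finite-dimensional by Proposition \ref{prop:com-alg-fd}.
\end{proof}
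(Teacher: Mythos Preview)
Your overall strategy --- pass to a finite-dimensional $\fg$-submodule $V'\subseteq V$ with $[V'\col L(0)]=0$, show $S_\Om(V')\to S_\Om(V)$ is surjective, and invoke Proposition~\ref{prop:com-alg-fd} --- is exactly the paper's. The gap is in your choice of $V'$.

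You set $V'=\bigoplus_{\mu\in\Om'}V_\mu$, a sum of \emph{weight spaces}. This object is neither a $\fg$-submodule nor finite-dimensional in general. For the first point: $\fg_\alpha\cdot V_\mu\subseteq V_{\mu+\alpha}$, and $\Om'$ is not closed under adding roots; concretely, for $\fg=\sl_2$ and $\Om=\{0\}$ one gets $\Om'=\{0\}$, but $V_0$ is not an $\sl_2$-submodule of, say, $V=L(2)$. Since $S_\Om(V')$, the condition $[V'\col L(0)]=0$, and Proposition~\ref{prop:com-alg-fd} all require $V'$ to be a $\fg$-module, the argument cannot proceed. For the second point: ``finite multiplicities'' means each $V^{(\nu)}$ is finite-dimensional, but a single weight space $V_\mu=\bigoplus_\nu V^{(\nu)}\otimes L(\nu)_\mu$ may still be infinite-dimensional, since infinitely many $\nu$ can satisfy $L(\nu)_\mu\ne 0$ (for $\sl_2$, every $L(2k)$ contributes to weight~$0$). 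So even the input to Proposition~\ref{prop:com-alg-fd} fails.

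The fix is to use the \emph{isotypic} decomposition rather than the weight decomposition: set $V'=\ta_\Om(V)=\bigoplus_{\mu\in\Om}V^{(\mu)}\otimes L(\mu)$ and $V''=\bta_\Om(V)$. Now $V'$ is a genuine $\fg$-submodule, finite-dimensional because $\Om$ is finite and multiplicities are finite, and $[V'\col L(0)]=0$ since $V$ is thin. Surjectivity is also cleaner than your weight argument: $V''\subseteq V\subseteq SV$ already lies in $\bta_\Om(SV)$, hence in the ideal defining $S_\Om(V)$, so every element of $V''$ maps to zero in $S_\Om(V)$ and the map $S_\Om(V')\to S_\Om(V)$ is onto. (Your surjectivity paragraph also has a slip in phrasing --- the weight of a product is the sum of the weights of the factors, not the weight of one factor --- though the intended reasoning, that the individual factor maps to zero, is sound.)
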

\begin{proof}
Consider the decomposition $V=V'\oplus V''$, where $V'=\ta_\Om(V)$ and $V''=\bta_\Om(V)$.
Then $V''\sbs \bta_\Om(SV)$, hence $\bop_{k\ge1}S^k(V'')\sbs S(V'')$ is mapped to zero in $S_{\Om}(V)$.
Therefore the map $S_{\Om}(V')\to S_{\Om}(V)$ is surjective.
By Proposition \ref{prop:com-alg-fd}, the algebra $S_{\Om}(V')$ is finite-dimensional.
\end{proof}

\begin{theorem}\label{main1}
If the adjoint action of $\fg$ on $\bfg$ is thin, then the functor $\Bind_\Om\colon\II(\fg)\to\IIx\bfg{\Om}$ maps finite-dimensional modules to finite-dimensional modules.
\end{theorem}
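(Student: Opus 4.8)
The plan is to reduce the theorem to the finite-dimensionality statement about the symmetric algebra $S_\Om(V)$ already established in the Corollary, by realising $\Bind_\Om(W)$ for a finite-dimensional $W$ as a module over an algebra of the form $S_\Om(V)$ for a thin $\fg$-module $V$ built from $\bfg$. First I would decompose $\bfg$ as a $\fg$-module: by the thinness hypothesis the centralizer $\bfg^\fg$ is zero and every $L(\mu)$ occurs with finite multiplicity, so $\bfg$ itself is a thin $\fg$-module. Then $\bfn_-$, being a $\fg$-submodule of $\bfg$ with no $L(0)$-isotypic part, is likewise a thin module, and since $\bfg=\bfn_-\oplus\bfh\oplus\bfn_+$ the PBW theorem gives a $\fg$-module isomorphism $U(\bfg)\iso U(\bfn_-)\ts U(\bfh)\ts U(\bfn_+)$, hence $\Ind_\fg^\bfg W\iso U(\bfn_-)\ts U(\bfh)\ts U(\bfn_+)\ts_{U(\fg)}W$; taking $W=L(\la)$ one sees more concretely $\Ind_\fg^\bfg L(\la)$ is a quotient of $U(\bfn_-)\ts U(\bfh)\ts L(\la)$ as $\fg$-modules.

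The key step is to control the size of $\Bind_\Om(W)=\ta_\Om^\bfg\Ind_\fg^\bfg W$ after the truncation. Here I would exploit that $\ta_\Om^\bfg$ kills the $\fg$-submodule generated by $\bta_\Om(\Ind_\fg^\bfg W)$, so only finitely many isotypic components $L(\mu)$, $\mu\in\Om$, survive; as in the proof of Proposition \ref{prop:com-alg-fd} this forces the surviving module to have a finite weight support $P(\Bind_\Om W)$. The point is then to bound the multiplicities. Since $\Bind_\Om W$ is a cyclic (indeed $\fg$-generated by the image of $W$) module over the truncated algebra, and $U(\bfn_-)$ surjects onto the relevant piece, it suffices to show that the image of $U(\bfn_-)$ acting on $W$ inside $\ta_\Om^\bfg\Ind_\fg^\bfg W$ is finite-dimensional. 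For that I would pass to the associated graded: $\gr U(\bfn_-)=S(\bfn_-)$, and the truncation becomes, up to the finitely many isotypic components, a quotient of $S_\Om(\bfn_-)\ts W$ (using that the truncation functor on the $A=U(\bfn_-)$-module side is compatible with $\ta_\Om^A$ via the isomorphism $\Mod(A,\II(\fg))\iso\II(\bfg)$ noted after the definition of $\ta_\Om^A$ — strictly one should phrase $\bfg=\fg\oplus I$ only when $\fg$ is an ideal complement, but the PBW filtration argument works in general). Because $\bfn_-$ is thin, the Corollary gives $\dim S_\Om(\bfn_-)<\infty$, and then $S_\Om(\bfn_-)\ts W$ is finite-dimensional; its quotient $\Bind_\Om W$ is therefore finite-dimensional.

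The main obstacle, and the place where the argument needs care, is the compatibility of the truncation functor $\ta_\Om^\bfg$ with the PBW filtration on $U(\bfg)$: truncation is a non-graded operation (it depends on the isotypic decomposition, not on the filtration degree), so one must check that passing to $\gr$ does not lose the boundedness one has gained. Concretely one wants: if $\bop_{\mu\in P}V_\mu$ has finite support and $V$ is $\fg$-integrable with isotypic types confined to a finite set $\Om$, then any $\fg$-module surjection $S(\bfn_-)\ts W\onto V$ factors through the finite-dimensional $S_\Om(\bfn_-)\ts W$ modulo the ideal generated by $\bta_\Om$. I would establish this by the same integral-element/derivation bookkeeping used in Proposition \ref{prop:com-alg-fd}: the weight-zero part of $\bfn_-$ contributes elements integral over the finite-dimensional "nonzero-weight" subalgebra, so the whole algebra acting is module-finite over a finite-dimensional base. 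Once this is in place, finite-dimensionality of $\Bind_\Om W$ for finite-dimensional $W$ follows, and specialising $W=L(\la)$ and $\Om=\Pp{\le\la}$ recovers the finite-dimensionality of the global Weyl modules $W(\la)$.
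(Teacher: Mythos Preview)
Your approach has a genuine gap at the very first step: $\bfn_-$ is \emph{not} a $\fg$-submodule of $\bfg$. The decomposition $\bfg=\bfn_-\oplus\bfh\oplus\bfn_+$ is by $\fh$-weights, not by $\fg$-isotypic components; the adjoint action of $e_i\in\fg$ sends $\bfg_\mu$ to $\bfg_{\mu+\al_i}$, which for $\mu<0$ can land in $\bfh$ or $\bfn_+$. (Concretely, in $L_0(\HH_2)$ the component $\bfg\gr1\iso L(3)$ has weight $-1$ vectors in $\bfn_-$ that $e$ sends to weight $1$ vectors in $\bfn_+$.) So the phrase ``$\bfn_-$, being a $\fg$-submodule of $\bfg$ with no $L(0)$-isotypic part'' is false, and the Corollary on $S_\Om(V)$ cannot be applied to $V=\bfn_-$: that algebra has no $\fg$-action for which the truncation $\ta_\Om$ makes sense. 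A related problem is that you never control the $U(\bfh)$ factor: $\bfh=\bfg_0$ is the full weight-zero subspace (typically infinite-dimensional, e.g.\ $\bfh=\bop_{i\ge0}\bC e_{ii}$ for $L_0(\HH_2)$), and thinness only says $[\bfg\col L(0)]=0$, not $\bfh=\fh$.

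The paper's proof sidesteps both issues by never splitting off $\bfn_-$. Instead it first replaces $\Om$ by a larger finite set $\Om'$ (the set of $\mu\in P_+$ such that $L(\mu)$ occurs in $\Hom(V,L(\nu))$ for some $\nu\in\Om$), so that the map $U(\bfg)\ts V\to\Bind_\Om(V)$ already kills $\bta_{\Om'}(U(\bfg))\ts V$. This reduces to showing $M=U(\bfg)/U(\bfg)\bta_{\Om'}(U(\bfg))$ is finite-dimensional, and now the PBW filtration gives a surjection $S(\bfg)\to\Gr M$ factoring through $S_{\Om'}(\bfg)$ --- with the whole $\bfg$, which \emph{is} a thin $\fg$-module, so Proposition~\ref{prop:com-alg-fd} applies directly. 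The integral-element argument there is precisely what handles the contribution of the weight-zero part $\bfh$; you cannot excise it beforehand.
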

\begin{proof}
We have $\Bind_\Om (V)=N'/A(\bta_\Om N')$, where $A=U(\bfg)$ and $N'=A\ts_{U(\fg)}V$.
Therefore it is enough to show that
$\bar N=N/A(\bta_\Om N)$ has finite dimension for $N=A\ts V$.
Let $A=\bop_{\mu\in P_+}A^{(\mu)}\ts L(\mu)$.
The image of $A^{(\mu)}\ts L(\mu)\ts V$ in $\bar N$ can be nonzero only if there is a summand $L(\nu)$ of $L(\mu)\ts V$ such that $\nu\in \Om$.
We have
\[\Hom_{\fg}(L(\mu)\ts V,L(\nu))\iso
\Hom_{\fg}(L(\mu),\Hom(V,L(\nu))),
\]
hence $L(\mu)$ has to be a summand of $\Hom(V,L(\nu))$
for some $\nu\in \Om$.
The set $\Om'$ of such $\mu\in P_+$ is finite.
We conclude that $A\bta_{\Om'}(A)\ts V$ is mapped to zero in $\bar N$,
hence $\bar N$ is a quotient of $(A/A\bta_{\Om'} (A))\ts V$
and it is enough to show that the $A$-module $M=A/A(\bta_{\Om'} A)$ is finite-dimensional.
The PBW filtration $(F_i)_{i\ge0}$ on $A$ induces the filtration $(F_iM)_{i\ge0}$ on~$M$.
The epimorphism $A\to M$  induces the epimorphism between associated graded modules $\Gr A\iso S\bfg\to \Gr M$.
We have $\bta_{\Om'} M=0$, hence $\bta_{\Om'}\Gr M=0$ and the above map factorises through $S\bfg\to S_{\Om'}(\bfg)=(S\bfg)/(S\bfg\cdot \bta_{\Om'}(S\bfg))$. According to Proposition \ref{prop:com-alg-fd}, $S_{\Om'}(\bfg)$ is finite-dimensional, completing the proof.
\end{proof}

\subsection{Applications of the main result}\label{sec:applic}

Let us give some examples of situations where our result applies. We begin with a class of examples including that of $L_0(\HH_2)$. Let us denote by $\WW_2$ the Lie algebra of all polynomial vector fields on the plane, and by $L_0(\WW_2)$ the Lie subalgebra of that algebra consisting of all vector fields that vanish at the origin.

\begin{corollary}\label{cor:W2}
For every Lie subalgebra $\bfg\subset L_0(\WW_2)$ that contains
 \[
\fg=\mathbb{C}\{x\partial_y, y\partial_x,x\partial_x-y\partial_y\}\cong \sl_2
 \]
and does not contain the Euler vector field $x\partial_x+y\partial_y$, all global Weyl modules are finite-dimensional.
\end{corollary}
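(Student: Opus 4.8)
The strategy is to reduce Corollary \ref{cor:W2} to Theorem \ref{main1} by verifying that the adjoint action of $\fg\cong\sl_2$ on $\bfg$ is thin, in the sense of the definition preceding the theorem. Two things must be checked: that every irreducible $\fg$-module occurs in $\bfg$ with finite multiplicity, and that the trivial module $L(0)$ does not occur at all. First I would recall the $\sl_2$-module structure of the ambient algebra $L_0(\WW_2)$. Writing $L_0(\WW_2)$ as the space of vector fields $p\,\dd_x+q\,\dd_y$ with $p,q\in\bC[x,y]$ vanishing at the origin, one has a grading by total polynomial degree, and each graded piece is finite-dimensional; the $\fg$ from the statement is a copy of $\sl_2$ sitting inside the degree-zero part (spanned by $x\dd_x-y\dd_y$, $x\dd_y$, $y\dd_x$), and its adjoint action preserves this grading. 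Hence $\bfg\subset L_0(\WW_2)$ is a weighted $\fg$-submodule with finite-dimensional graded components, so the adjoint action of $\fg$ on $\bfg$ is integrable (this is the standing hypothesis of Section \ref{sec:findim}, which also needs the triangular decomposition $\bfg=\bfn_-\oplus\bfh\oplus\bfn_+$; by Remark \ref{rm:sep} this is automatic since $P\cong\bZ$ for $\sl_2$). Finite-dimensionality of each graded piece immediately gives finite multiplicity of every $L(n)$ in $\bfg$, so the only real content is the vanishing $[\bfg\col L(0)]=0$.

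**The key step.** The heart of the argument is: a Lie subalgebra $\bfg\subset L_0(\WW_2)$ containing $\fg$ has $[\bfg\col L(0)]=0$ if and only if it does not contain the Euler vector field $E=x\dd_x+y\dd_y$. The plan is to decompose the whole of $L_0(\WW_2)$ as an $\fg$-module and locate all trivial summands. Grouping vector fields by polynomial degree $d\ge 1$, the degree-$d$ part is $\big(\bC[x,y]_d\big)^{\oplus 2}$ as a vector space, with $\sl_2$ acting; one identifies $\bC[x,y]_d\cong L(d)$ as an $\sl_2$-module (the binary forms of degree $d$), and the two-dimensional "coefficient" space $\bC\{\dd_x,\dd_y\}$ is the standard module $L(1)$, so the degree-$d$ piece is $L(d)\otimes L(1)\cong L(d+1)\oplus L(d-1)$ by Clebsch--Gordan. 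Thus a trivial summand $L(0)$ can only come from the $L(d-1)$-term with $d-1=0$, i.e. from degree $d=1$: the degree-one vector fields $\bC\{x\dd_x,x\dd_y,y\dd_x,y\dd_y\}\cong L(2)\oplus L(0)$, where the $L(2)$ is exactly our $\fg$ and the $L(0)$ is spanned by the Euler field $E$. Since $E$ is the \emph{unique} (up to scalar) degree-one vector field commuting with $\fg$, and it is the only source of a trivial summand anywhere in $L_0(\WW_2)$, a submodule $\bfg$ has $[\bfg\col L(0)]\neq 0$ precisely when $E\in\bfg$. Therefore, if $\bfg$ does not contain $E$, then $[\bfg\col L(0)]=0$, and together with the finite-multiplicity observation above this says exactly that the adjoint action of $\fg$ on $\bfg$ is thin.

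**Conclusion.** With thinness established, Theorem \ref{main1} applies directly: the functor $\Bind_\Om$ sends finite-dimensional $\fg$-modules to finite-dimensional $\bfg$-modules. Taking $\Om=\Pp{\le\la}$ (a finite set by the discussion in Section \ref{sec:int-bounded}) and $V=L(\la)$, Proposition \ref{prop:usualWeyl} identifies $\Bind_{\le\la}(L(\la))$ with the global Weyl module $W(\la)$; since $L(\la)$ is finite-dimensional, so is $W(\la)$. As this holds for every $\la\in P$, all global Weyl modules for $\bfg$ are finite-dimensional, which is the claim. The case $\bfg=L_0(\HH_2)$ is recovered since $\HH_2$ consists of divergence-free (Hamiltonian) fields, none of which equals $E$ — indeed $\operatorname{div}E=2\neq 0$ — so $L_0(\HH_2)\subset L_0(\WW_2)$ falls under the corollary, matching the finite-dimensional decompositions tabulated in Section \ref{sec:globalH2}.

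**Main obstacle.** The only subtle point is the $\fg$-module identification of $L_0(\WW_2)$ and the bookkeeping that the \emph{sole} copy of $L(0)$ in $L_0(\WW_2)$ is the line spanned by $E$; once one has the Clebsch--Gordan decomposition $L(d)\otimes L(1)\cong L(d+1)\oplus L(d-1)$ degree by degree this is immediate, but one should double-check the edge case $d=1$ where the "expected" $L(d-1)=L(0)$ genuinely appears (for $d\ge 2$ the term $L(d-1)$ is nontrivial and contributes no $L(0)$), and confirm that the bracket of $\fg$ with a would-be trivial vector (necessarily a multiple of $E$ modulo $\fg$-covariants) cannot produce, inside a subalgebra avoiding $E$, any new trivial summand — which follows since a submodule of a module with a unique $L(0)$ either contains that $L(0)$ or has none.
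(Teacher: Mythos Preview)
Your proof is correct and follows the same route as the paper: decompose $L_0(\WW_2)$ degree by degree as $L(d)\otimes L(1)\cong L(d+1)\oplus L(d-1)$ via Clebsch--Gordan, observe that the unique copy of $L(0)$ is the line spanned by the Euler field, conclude thinness, and invoke Theorem~\ref{main1}. One sentence to tighten: ``finite-dimensionality of each graded piece immediately gives finite multiplicity of every $L(n)$'' is false as a general principle (nothing prevents $L(n)$ from appearing once in every degree); the finite multiplicity actually comes from the explicit decomposition you derive in the next paragraph, which shows each $L(n)$ occurs at most twice in $L_0(\WW_2)$, hence at most twice in any $\fg$-submodule $\bfg$.
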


\begin{proof}
A vector field vanishing at the origin is of the form $f(x,y)\partial_x+g(x,y)\partial_y$, where $f$ and $g$ belong to the ideal of $\mathbb{C}[x,y]$ generated by $x$ and $y$. It easily follows that, if we consider $L_0(\WW_2)$ as an $\sl_2$-module with respect to the adjoint action, we have an isomorphism of $\sl_2$-modules
 \[
L_0(\WW_2)\cong L(1)\otimes \bigoplus_{k\ge 1}L(k)\cong \bigoplus_{k\ge 1}(L(k-1)\oplus L(k+1)).
 \]
We note that each irreducible $\sl_2$-module appears in this decomposition with multiplicity at most two. Moreover, this decomposition has exactly one summand $L(0)$; it appears for $k=1$, that is, for linear vector fields, and corresponds precisely to the Euler vector field.
It follows that the adjoint action of $\fg=\sl_2$ on $\bfg$ is thin, and Theorem \ref{main1} applies.
\end{proof}

Another class of examples where our results apply is that where we consider a finite-dimensional Lie algebra containing $\fg$.

\begin{corollary}\label{cor:FD}
For every finite-dimensional Lie algebra $\bfg$ containing $\fg$ as a Lie subalgebra and graded by the root system of $\fg$, if the centralizer of $\fg$ in $\bfg$ is zero, then all global Weyl modules are finite-dimensional.
\end{corollary}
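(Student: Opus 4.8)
The plan is to deduce this straight from Theorem \ref{main1} by checking that the adjoint action of $\fg$ on $\bfg$ is thin. The first thing I would verify is the standing assumption $\bfg=\bfn_-\oplus\bfh\oplus\bfn_+$. Since $\bfg$ is graded by the root system of $\fg$, the set of weights $P(\bfg)$ is contained in $\De\cup\{0\}$; in particular $\bfg_0=\bfh$ and every nonzero weight of $\bfg$ lies in $\De_+$ or $\De_-$, so $\bfn_\pm=\bop_{\al\in\De_\pm}\bfg_\al$ and the required triangular decomposition holds by construction.

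Next I would check thinness of $\bfg$ as an $\fg$-module. Because $\bfg$ is finite-dimensional, every irreducible $\fg$-module occurs in $\bfg$ with finite multiplicity, so the only condition left to verify is $[\bfg\col L(0)]=0$. Here I would use that $L(0)$ is the trivial one-dimensional $\fg$-module, so its isotypic component in $\bfg$ coincides with the space of $\fg$-invariants for the adjoint action, namely $\sets{x\in\bfg}{[\fg,x]=0}$, which is exactly the centralizer of $\fg$ in $\bfg$. By hypothesis this centralizer is zero, hence $[\bfg\col L(0)]=0$ and the adjoint action of $\fg$ on $\bfg$ is thin.

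Finally I would combine this with Proposition \ref{prop:usualWeyl} and Theorem \ref{main1}. For $\la\notin P_+$ we have $W(\la)=0$, and for $\la\in P_+$ the module $L(\la)$ is finite-dimensional and $W(\la)\cong\Bind_{\le\la}(L(\la))=\Bind_\Om(L(\la))$ with $\Om=\Pp{\le\la}$ finite; since the adjoint action is thin, Theorem \ref{main1} then yields that $W(\la)$ is finite-dimensional. I do not expect any genuine obstacle: all the work is in Theorem \ref{main1}, and the corollary is merely the observation that, for a finite-dimensional $\bfg$ graded by $\De$, thinness of the adjoint $\fg$-module is equivalent to the vanishing of the centralizer.
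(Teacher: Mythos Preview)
Your proof is correct and follows exactly the same approach as the paper: verify the triangular decomposition from the root grading, observe that finite-dimensionality plus vanishing centralizer give thinness, and invoke Theorem~\ref{main1}. The paper's own argument is just a terser version of what you wrote.
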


\begin{proof}
The root grading defines the necessary decomposition
$\bfg=\bfn_-\oplus\bfh\oplus\bfn_+$.
Moreover, finite-dimensionality of $\bfg$ together with the zero centralizer condition assures that the adjoint action of $\fg$ on $\bfg$ is thin, so Theorem \ref{main1} applies.
\end{proof}

To give one final example, let us recall the definition of the Lie algebra $\sl(\lambda)$ introduced by Feigin~\cite{MR940679}.
Given $\lambda\in\mathbb{C}$,
consider the quotient of the universal enveloping algebra $U(\sl_2)$ by the two-sided ideal generated by $\Delta-\frac{\la^2-1}2$, where $\De=ef+fe+\frac{h^2}2\in U(\sl_2)$ is the Casimir element.
Let $\gl(\la)$ be the Lie algebra obtained from that associative algebra by considering the usual Lie bracket $[a,b]=ab-ba$.
If $\lambda=n\in\bZ_{\ge1}$, the Casimir element acts by $\frac{n^2-1}2$ on the irreducible $\mathfrak{sl}_2$-module $V=L(n-1)$ of dimension~$n$.
Therefore we have a surjective algebra homomorphism
$\gl(\la)\to \gl(V)\iso\gl_n$,
explaining the name $\gl(\la)$.
Let $\sl(\la)=\gl(\la)/\bC 1$ be the quotient of $\gl(\la)$ by the center.
Being a quotient of $U(\sl_2)$, the Lie algebra $\bfg=\sl(\lambda)$ contains $\fg=\sl_2$ as a Lie subalgebra.

\begin{corollary}\label{cor:sl-lambda}
For the Lie algebra $\bfg=\sl(\lambda)$ and its Lie subalgebra $\fg=\sl_2$, all global Weyl modules are finite-dimensional.
\end{corollary}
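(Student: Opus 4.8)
By Corollary~\ref{cor:FD}, it suffices to verify that $\bfg=\sl(\la)$, viewed as an $\sl_2$-module under the adjoint action, is thin; that is, every irreducible $\sl_2$-module occurs in $\bfg$ with finite multiplicity and $L(0)$ does not occur at all. The plan is to compute the adjoint $\sl_2$-module structure of $\gl(\la)$ directly from its description as a quotient of $U(\sl_2)$, then pass to $\sl(\la)=\gl(\la)/\bC 1$.

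First I would recall the classical $\sl_2$-module decomposition of $U(\sl_2)$ under the adjoint action. Using the PBW filtration, the associated graded is $S(\sl_2)=S(L(2))$, and as an $\sl_2$-module one has $S^n(L(2))\iso L(2n)\oplus L(2n-4)\oplus\dots\oplus L(2n\bmod 4\text{-part})$, i.e.\ $\bop_{k=0}^{\lfloor n/2\rfloor}L(2n-4k)$. Hence $U(\sl_2)\iso S(\sl_2)\iso \bop_{m\ge0} c_m\, L(2m)$ with each multiplicity $c_m$ \emph{finite} (in fact $c_m$ grows linearly in $m$). The center of $U(\sl_2)$ is $\bC[\Delta]$, which as an $\sl_2$-module is $\bop_{j\ge0}L(0)$, sitting inside $U(\sl_2)$ as the $L(0)$-isotypic component. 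Quotienting by the ideal $(\Delta-\tfrac{\la^2-1}2)$ kills the relation that makes $\Delta$ act as a free variable: concretely, $U(\sl_2)$ is free of rank one as a module over its center $\bC[\Delta]$ in each $L(2m)$-isotypic block once $m$ is fixed, so the quotient $\gl(\la)=U(\sl_2)/(\Delta-\tfrac{\la^2-1}2)$ has $\sl_2$-module structure $\bop_{m\ge0}L(2m)$ — each irreducible of even highest weight appears exactly once, and odd ones not at all. In particular $\gl(\la)$ contains exactly one copy of $L(0)$, namely the image of $\bC 1$. Therefore $\sl(\la)=\gl(\la)/\bC 1\iso \bop_{m\ge1}L(2m)$, so $[\sl(\la):L(0)]=0$ and every irreducible has multiplicity $\le 1$. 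This shows the adjoint $\sl_2$-action on $\sl(\la)$ is thin; equivalently, the centralizer of $\sl_2$ in $\sl(\la)$ is zero.

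The remaining point is that Corollary~\ref{cor:FD} as literally stated assumes $\bfg$ finite-dimensional, whereas $\sl(\la)$ is infinite-dimensional for $\la\notin\bZ_{\ge1}$. However, what is actually used in the proof of Corollary~\ref{cor:FD} (and in Theorem~\ref{main1}) is only that the adjoint action is thin and that $\bfg=\bfn_-\oplus\bfh\oplus\bfn_+$; the latter holds automatically for $\fg=\sl_2$ by Remark~\ref{rm:sep}, since $P\iso\bZ$. So I would invoke Theorem~\ref{main1} directly: thinness of the adjoint action gives that $\Bind_{\le\la}$ maps finite-dimensional modules to finite-dimensional modules, and by Proposition~\ref{prop:usualWeyl} the global Weyl module $W(\la)$ equals $\Bind_{\le\la}(L(\la))$, which is finite-dimensional since $L(\la)$ is.

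The main obstacle is the computation of the adjoint $\sl_2$-module structure of $\gl(\la)$ — specifically, verifying that imposing the single central relation $\Delta=\tfrac{\la^2-1}2$ collapses each $L(0)$-multiplicity-free-over-$\bC[\Delta]$ block of $U(\sl_2)$ to exactly one copy of each $L(2m)$. This is a standard but slightly delicate fact about $U(\sl_2)$ as a module over its center; one clean way to see it is via the Harish-Chandra/PBW argument that $U(\sl_2)\iso \bC[\Delta]\ts \HH$ as $\bC[\Delta]$-modules-with-$\sl_2$-action, where $\HH=\bop_{m\ge0}L(2m)$ is the space of "harmonic" elements (an $\sl_2$-stable complement to $\Delta\cdot U(\sl_2)$), so that $\gl(\la)=U(\sl_2)\ts_{\bC[\Delta]}\bC\iso\HH$ as $\sl_2$-modules, independently of $\la$. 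Everything else is immediate from the results already established in the excerpt.
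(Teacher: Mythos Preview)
Your argument is correct and follows the same route as the paper: establish the adjoint $\sl_2$-decomposition $\sl(\lambda)\cong\bigoplus_{m\ge1}L(2m)$, conclude thinness, and apply Theorem~\ref{main1}. The paper simply cites Feigin for this decomposition, whereas you supply a sketch via the Kostant--Harish-Chandra description of $U(\sl_2)$ as free over its center $\bC[\Delta]$ with harmonic factor $\bigoplus_{m\ge0}L(2m)$; this is a welcome elaboration but not a different strategy.

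One presentational point: your opening sentence invokes Corollary~\ref{cor:FD}, which as you yourself note later requires $\bfg$ to be finite-dimensional and so does not apply to $\sl(\lambda)$ for generic $\lambda$. Since you correctly switch to Theorem~\ref{main1} anyway, just drop the reference to Corollary~\ref{cor:FD} at the start and invoke Theorem~\ref{main1} (together with Remark~\ref{rm:sep} for the decomposition $\bfg=\bfn_-\oplus\bfh\oplus\bfn_+$) from the outset.
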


\begin{proof}
We have a decomposition with respect to the adjoint action of $\sl_2$
\cite{MR940679}
 \[
\sl(\lambda)=\bigoplus_{n\ge 1}L(2n),
 \]
so the adjoint action of $\fg=\sl_2$ on $\bfg$ is thin, and Theorem \ref{main1} applies.
\end{proof}

\section{Stratifications}\label{sec:stratifications}
In this section we will equip the category $\cA=\IB(\bfg)$ of $\bfg$-modules that are integrable and bounded over $\fg$
with a (left) stratification structure over the poset $P_+$.
It turns out that the strata categories $\cA_\la$ of this stratification (for $\la\in P_+$)
are the categories of left modules over certain algebras $\A_\la$ originally introduced
in \cite{MR2718936}
for the Lie algebras of the form $\bfg=\fg\ts A$, where $A$ is commutative.
These algebras were further studied in \cite{MR3765456} for the root-graded Lie algebras $\bfg$, where it was shown that the algebras $\A_\la$ are isomorphic to the algebras originally introduced in \cite{MR634510} in the context of $\la$-admissible modules.
We will see that $\A_\la^\op$ is isomorphic to the endomorphism algebra of the Weyl module $W(\la)$.
The Weyl functors $\W_\la:\Mod\A_\la\to\II(\bfg)$
(for the algebras $\bfg=\fg\ts A$) have been used to produce and study new $\bfg$-modules,
but their interpretation as building blocks of a stratification structure seems to be missing in the literature.
Using this point of view, we can interpret global Weyl modules as standard objects and local Weyl modules as proper standard objects of our stratified category
(\cf \cite{khoroshkin2015highestweightcategoriesmacdonald}).

\subsection{Left recollement}
If $j^*:\cA\to\cB$ is an exact functor between abelian categories, then the category $\cC=\Ker j^*$ is a Serre subcategory of $\cA$ and the induced functor $\cA/\cC\to\cB$ is faithful and exact \stack{02MN}.
The functor $\cA/\cC\to\cB$ can be full even if $j^*:\cA\to\cB$ was not.

\begin{lemma}\label{lm:recol}
Let $j^*:\cA\to\cB$ be an exact functor between abelian categories
having a fully faithful left adjoint functor $j_!$ (equivalently, the adjunction morphism $\id\to j^*j_!$ is an isomorphism).
Then the induced functor $\cA/\cC\to\cB$, for $\cC=\Ker j^*$,
is an equivalence and the embedding functor $i_*:\cC\to\cA$ has a left adjoint $i^*$.
\end{lemma}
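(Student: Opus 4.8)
The plan is to establish the three assertions of the lemma in turn, using the hypothesis that the unit $\id\to j^*j_!$ is an isomorphism (equivalently, $j_!$ fully faithful). First I would show the induced functor $\bar\jmath\colon\cA/\cC\to\cB$ is an equivalence. By \stack{02MN} it is already faithful and exact, so it remains to produce a quasi-inverse or, equivalently, to check it is full and essentially surjective. Essential surjectivity is immediate: for $B\in\cB$ we have $\bar\jmath(j_!B)\iso j^*j_!B\iso B$. For fullness, given $A,A'\in\cA$, a morphism $\bar\jmath A\to\bar\jmath A'$ in $\cB$ is $j^*A\to j^*A'$; I would compose with the counit-type comparison using the adjunction $j_!\dashv j^*$. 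Concretely, a morphism $j^*A\to j^*A'$ in $\cB=\bar\jmath(\cA/\cC)$ corresponds, via $\Hom_{\cB}(j^*A,j^*A')\iso\Hom_{\cB}(j^*A,j^*A')$ and the unit isomorphism, to a roof in $\cA/\cC$: one takes the adjunct map $j_!j^*A\to A'$ and the canonical counit $j_!j^*A\to A$, whose cone lies in $\cC$ because $j^*$ applied to it is an isomorphism (as $j^*j_!j^*A\to j^*A$ is iso by the triangle identity). This roof is the required preimage, and one checks it is well-defined and compatible with composition. The main obstacle here is bookkeeping with roofs in the Serre quotient and verifying that the constructed map is independent of choices; this is the technical heart of the argument.

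Second, I would construct the left adjoint $i^*$ to the embedding $i_*\colon\cC\emb\cA$. The natural candidate is $i^*(A)=\operatorname{coker}(j_!j^*A\to A)$, where the map is the counit of $j_!\dashv j^*$. One must check $i^*(A)\in\cC$, i.e.\ $j^*i^*(A)=0$: since $j^*$ is exact it suffices that $j^*(j_!j^*A\to A)$ is an epimorphism, which follows from the triangle identity $j^*A\xto{\text{unit}}j^*j_!j^*A\xto{j^*(\text{counit})}j^*A$ being the identity and the unit being an isomorphism, so $j^*(\text{counit})$ is an isomorphism, hence epi. Then for $C\in\cC$ one has $\Hom_{\cA}(A,i_*C)\iso\Hom_{\cA}(i^*A,C)$ by the universal property of cokernel: any $A\to i_*C$ kills $j_!j^*A$ because $\Hom_{\cA}(j_!j^*A,i_*C)\iso\Hom_{\cB}(j^*A,j^*i_*C)=\Hom_{\cB}(j^*A,0)=0$ using $j^*i_*=0$ (as $i_*C\in\cC=\Ker j^*$). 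So the morphism factors uniquely through the cokernel, giving the adjunction.

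A cleaner route for the second part, which I would likely adopt to keep the exposition short, is to note that by the first part $\bar\jmath$ is an equivalence, so $j^*$ factors as $\cA\to\cA/\cC\xto{\sim}\cB$; the quotient functor $\cA\to\cA/\cC$ of a Serre subcategory $\cC$ has a left adjoint precisely when $\cC$ is a \emph{localizing} subcategory, and here one exhibits that left adjoint directly as $A\mapsto j_!j^*A$ followed by the observation above. Either way, the verification is formal once the equivalence in step one is in hand. I expect the bulk of the proof to be step one — the identification $\cA/\cC\iso\cB$ via the roof calculus — with step two being a short formal consequence; no step requires genuinely new ideas, only careful manipulation of the adjunction triangle identities and the universal property of Serre quotients.
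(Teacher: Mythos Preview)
Your proposal is correct and essentially matches the paper's proof: both arguments hinge on the counit $j_!j^*A\to A$ becoming an isomorphism in $\cA/\cC$ (since $j^*$ of it is an isomorphism and the faithful exact functor $\cA/\cC\to\cB$ reflects isomorphisms), and both construct $i^*$ as $\Coker(j_!j^*\to\id)$ with the same adjunction check via $\Hom(j_!j^*A,i_*C)=0$. The only cosmetic difference is that the paper packages step one as exhibiting $\cB\xto{j_!}\cA\to\cA/\cC$ as a quasi-inverse, whereas you verify fullness and essential surjectivity by hand with a roof---but the underlying computation is identical.
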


\begin{proof}
\def\jm{\varsigma}
Consider 
the induced functor $\jm^*:\cA/\cC\to\cB$ and
the composition $\jm_!:\cB\xto{j_!}\cA\to\cA/\cC$.
Then $\jm^*\jm_!\iso\id$ and we need to show that $\jm_!\jm^*\to\id$ is an isomorphism
For every $X\in\cA$, the morphism $j_!j^*X\to X$ is an isomorphism in $\cA/\cC$ since $j^*j_!j^*\to j^*X$ is an isomorphism in $\cB$ and $\cA/\cC\to\cB$ is faithful and exact.

For $X\in\cA$, let $i^*(X)=X'=\Coker(j_!j^*X\to X)$.
We have an exact sequence
 \[
j^*j_!j^*X\to j^*X\to j^*X'\to0,
 \]
implying that $j^*X'=0$ and $X'\in\cC$.
For $Y\in\cC$, we have an exact sequence
$0\to\Hom(X',Y)\to\Hom(X,Y)\to\Hom(j_!j^*X,Y)=0$,
hence $\Hom(i^*X,Y)\iso \Hom(X,Y)$.
\end{proof}

A \idef{recollement} of abelian categories (\cf \cite[\S1.4]{BBD})
is a sequence of exact functors
$\cC\xto{i_*}\cA\xto{j^*}\cB$ between abelian categories such that
\begin{enumerate}
\item $j^*i_*=0$ and $i_*:\cC\to\Ker j^*$ is an equivalence.
\item
$j^*$ has left and right fully faithful adjoints $j_!,j_*$.
\end{enumerate}
\[
\begin{tikzcd}
\cC\rar["i_*"{description}]&
\lar[bend right,"i^*"']\lar[bend left,"i^!"]\cA\rar["j^*"{description}]
&\cB\lar[bend left,"j_*"]\lar[bend right,"j_!"']
\end{tikzcd}
\]
By Lemma \ref{lm:recol}, the functor $i_*$ has left and right adjoints $i^*,i^!$
and the induced functor $\cA/\cC\to\cB$ is an equivalence.
The adjunction morphisms
 \[
i^*i_*\to\id\to i^!i_* \quad \text{and} \quad j^*j_*\to\id\to j^*j_!
 \]
are isomorphisms.
We will say that we have a \idef{left recollement} if only the fully faithful left adjoint functor $j_!$ is required to exist.

\begin{example}\label{ex:recol}
Let $(X,\cO)$ be a ringed space, $U\sbs X$ be an open subset, $Z=X\ms U$ and $j:U\emb X$, $i:Z\emb X$ be the embeddings.
Let $\cA=M(X,\cO)$ be the abelian category of (left) $\cO$-modules over $X$ and similarly for $\cB=M(U,\cO)$ and $\cC=M(Z,\cO)$.
Then there is a recollement
\[
\begin{tikzcd}
M(Z,\cO)\rar["i_*"{description}]&
\lar[bend right,"i^*"']\lar[bend left,"i^!"]M(X,\cO)\rar["j^*"{description}]
&M(U,\cO)\lar[bend left,"j_*"]\lar[bend right,"j_!"']
\end{tikzcd}
\]
where
 \[
j^*:M(X,\cO)\to M(U,\cO) \quad \text{and}\quad i^*:M(X,\cO)\to M(Z,\cO)
 \]
are restriction functors,
 \[
j_*:M(U,\cO)\to M(X,\cO) \quad \text{and}\quad i_*:M(Z,\cO)\to M(X,\cO)
 \]
are direct image functors,
$j_!:M(U,\cO)\to M(X,\cO)$ is extension by zero, and
$i^!:M(X,\cO)\to M(Z,\cO)$ is the functor of sections supported on $Z$.
For $F\in M(X,\cO)$ there are exact sequences
$0\to j_!j^*F\to F\to i_*i^*F\to 0$ and $0\to i_*i^!F\to F\to j_*j^*F$.
For an arbitrary recollement we don't have exactness on the left in the first sequence in general (but we have exactness on the right by Lemma \ref{lm:recol}).
\end{example}

Assume that we have a left recollement
$\cC\xto{i_*}\cA\xto{j^*}\cB$.

\begin{lemma}
[\cf\ {\cite[Theorem 1]{MR2718936}}]
An object $X\in\cA$ is contained in $j_!(\cB)$ if and only if
\[\Hom(X,\cC)=\Ext^1(X,\cC)=0.\]
\end{lemma}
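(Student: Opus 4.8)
The plan is to prove both implications of the claimed characterization of the image $j_!(\cB)\sbs\cA$ in a left recollement, using the exact sequence from Lemma \ref{lm:recol} and the fact that $i_*$ is fully faithful with left adjoint $i^*$ identifying $\cC$ with $\Ker j^*$.

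First I would prove the forward direction. Suppose $X=j_!(B)$ for some $B\in\cB$. For any $Y\in\cC$, we have $\Hom_\cA(X,i_*Y)=\Hom_\cA(j_!B,i_*Y)\iso\Hom_\cB(B,j^*i_*Y)=0$ since $j^*i_*=0$; this gives $\Hom(X,\cC)=0$. For the $\Ext^1$ vanishing, given an extension $0\to i_*Y\to E\to j_!B\to 0$ in $\cA$ with $Y\in\cC$, apply the exact functor $j^*$: since $j^*i_*Y=0$ and $j^*j_!B\iso B$ (the adjunction morphism $\id\to j^*j_!$ is an isomorphism), we get $j^*E\iso B$. The counit $j_!j^*E\to E$ then yields a map $j_!B\iso j_!j^*E\to E$ which splits the sequence, because applying $j^*$ to the composite $j_!B\to E\to j_!B$ gives the identity of $B$ and hence the composite is an isomorphism (using that $\Hom(j_!B,i_*Y)=0$ kills the difference with the identity, or more directly that $j_!$ is fully faithful so the composite, having identity image under $j^*$, equals $\id_{j_!B}$). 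Thus $\Ext^1(X,\cC)=0$.

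For the converse, suppose $X\in\cA$ satisfies $\Hom(X,\cC)=\Ext^1(X,\cC)=0$. From Lemma \ref{lm:recol} we have the exact sequence $0\to i_*i^!X\to X$ when a right adjoint exists, but in the left-recollement setting we instead use $i^*(X)=\Coker(j_!j^*X\to X)\in\cC$; write $K=\Ker(j_!j^*X\to X)$, so that there are exact sequences $0\to K\to j_!j^*X\to X'\to 0$ and $0\to X'\to X\to i^*X\to 0$, where $X'=\Im(j_!j^*X\to X)$. Applying $j^*$ to the second sequence and using $j^*i^*X=0$ shows $j^*X'\iso j^*X$; since also $j^*(j_!j^*X)\iso j^*X$ and $j^*$ is exact, $j^*K=0$, i.e. $K\in\cC$. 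Now the hypothesis $\Hom(X,\cC)=0$ applied to the surjection $X\onto i^*X$ (with target in $\cC$) forces $i^*X=0$, so $X=X'$ and we have $0\to K\to j_!j^*X\to X\to 0$ with $K\in\cC$. The hypothesis $\Ext^1(X,\cC)=0$ — wait, this bounds extensions with $X$ in the \emph{first} slot, so instead I apply $\Hom(-,K)$ is not quite it; rather, I use that the surjection $j_!j^*X\to X$ with kernel $K\in\cC$ is classified by an element of $\Ext^1(X,K)=0$, hence splits: $j_!j^*X\iso X\oplus K$. Then $K$ is a direct summand of $j_!j^*X$; applying $j^*$ shows $j^*K=0$ is consistent, but to conclude $K=0$ I apply $j_!$ being fully faithful — concretely, $j^*(X\oplus K)\iso j^*X$ so the projection $j_!j^*X\to X$ becomes, under the equivalence $\cA/\cC\simeq\cB$, an isomorphism, and combined with $\Hom(X,\cC)=0$ forcing the splitting $X\to j_!j^*X$ to land with $K$-component zero, we get $X\iso j_!j^*X\in j_!(\cB)$.

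The main obstacle I anticipate is organizing the converse cleanly: one must extract from the two vanishing conditions both that $i^*X=0$ (so the adjunction counit $j_!j^*X\to X$ is \emph{surjective}) and that its kernel, which automatically lies in $\cC$, actually vanishes. The $\Hom$-vanishing handles surjectivity of the counit, and the $\Ext^1$-vanishing handles the splitting; the subtle point is that a splitting $j_!j^*X\iso X\oplus K$ does not immediately give $K=0$, so one needs the extra input that $K\in\cC=\Ker j^*$ together with $j_!$ being fully faithful (equivalently $\id\iso j^*j_!$) to see that $j_!j^*X$ has no nonzero subobject in $\cC$ — indeed if $K\sbs j_!j^*X$ with $j^*K=0$, then $K\hookrightarrow j_!j^*X$ becomes $0$ after $j^*$, and since $j_!j^*X=j_!B$ with $\Hom(j_!B,\cC)$ governed by $\Hom_\cB(B,j^*\cC)=0$... this shows any map \emph{from} $j_!B$ to $\cC$ is zero, which kills the projection $j_!j^*X\to K$, forcing $K=0$. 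This last point is where I would be most careful to get the variance of the adjunction right.
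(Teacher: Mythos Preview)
Your argument is correct and follows essentially the same route as the paper: for the forward direction you use the adjunction $\Hom(j_!B,\cC)\iso\Hom(B,j^*\cC)=0$ and split extensions via the counit $j_!j^*E\to E$, and for the converse you analyze the kernel and cokernel of the counit $j_!j^*X\to X$, showing both lie in $\cC$, then kill the cokernel with $\Hom(X,\cC)=0$, split off the kernel with $\Ext^1(X,\cC)=0$, and finally kill the kernel using $\Hom(j_!j^*X,\cC)=0$. The only difference is presentational: the paper writes the single four-term sequence $0\to Y_1\to j_!j^*X\to X\to Y_2\to 0$ directly rather than factoring through the image, which makes the exposition a bit tighter.
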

\begin{proof}
Let $X=j_!Z$ for $Z\in\cB$ and let $Y\in\cC$.
Then $\Hom(X,Y)=\Hom(Z,j^*Y)=0$.
If $0\to Y\to X'\to X\to 0$ is an exact sequence, then $j^*X'\iso j^*X=j^*j_!Z\iso Z$ and this isomorphism induces $X=j_!Z\to X'$ which splits the above exact sequence.

Conversely, assume that $X\in\cA$ satisfies $\Hom(X,\cC)=\Ext^1(X,\cC)=0$.
We need to show that $j_!j^*X\to X$ is an isomorphism.
Consider an exact sequence $0\to Y_1\to j_!j^*X\to X\to Y_2\to0$.
Applying $j^*$, we obtain an isomorpism $j^*j_!j^*X\isoto j^*X$,
hence $Y_1,Y_2\in\cC$.
As $\Hom(X,\cC)=0$, we obtain $Y_2=0$.
As $\Ext^1(X,\cC)=0$, we obtain $j_!j^*X=Y_1\oplus X$.
But $\Hom(j_!j^*X,Y_1)\iso\Hom(j^*X,j^*Y_1)=0$, hence $Y_1=0$ and
$j_!j^*X\iso X$.
\end{proof}

\begin{lemma}
If $\cB$ has enough projectives,
then the functor
$j_!:\cB\to\cA$ is exact if and only if
$\Ext^2(j_!\cB,\cC)=0$.
\end{lemma}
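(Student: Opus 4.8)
The plan is to exploit the fact that $j_!$ is always right exact (being a left adjoint to the exact functor $j^*$), so the only issue is left exactness; equivalently, exactness of $j_!$ is the statement that $j_!$ sends every short exact sequence $0\to Z'\to Z\to Z''\to 0$ in $\cB$ to a short exact sequence in $\cA$, i.e. that the natural map $j_!Z'\to j_!Z$ is a monomorphism. First I would fix such a short exact sequence and let $K=\Ker(j_!Z'\to j_!Z)$, so that we get an exact sequence $0\to K\to j_!Z'\to j_!Z\to j_!Z''\to 0$. Applying the exact functor $j^*$ and using $j^*j_!\iso\id$, the sequence $0\to j^*K\to Z'\to Z\to Z''\to 0$ is exact; comparing with the original exact sequence in $\cB$ forces $j^*K=0$, hence $K\in\cC$.

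Next I would relate this kernel $K$ to an $\Ext^2$ group. Break the four-term exact sequence into two short exact sequences, $0\to K\to j_!Z'\to B\to 0$ and $0\to B\to j_!Z\to j_!Z''\to 0$, where $B=\Im(j_!Z'\to j_!Z)$. The second sequence shows $B\in j_!(\cB)$ up to the obstruction measured by the previous lemma: in fact I expect the cleaner route is to use that $j_!Z'$ and $j_!Z$ lie in $j_!(\cB)$, so by the characterization lemma $\Hom(j_!Z',\cC)=\Ext^1(j_!Z',\cC)=0$ and likewise for $j_!Z$. From $0\to B\to j_!Z\to j_!Z''\to 0$ and the long exact sequence of $\Hom(-,Y)$ for $Y\in\cC$, together with the hypothesis $\Ext^2(j_!Z'',Y)=0$, one gets $\Ext^1(B,Y)=0$; combined with $\Hom(j_!Z',Y)=0$ one also extracts $\Hom(B,Y)=0$ from the first short exact sequence's long exact sequence only after handling $\Hom(K,Y)$, so the bookkeeping needs care. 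The point of the argument is that the long exact sequence associated to $0\to K\to j_!Z'\to B\to 0$ gives $0\to\Hom(B,Y)\to\Hom(j_!Z',Y)=0$, hence $\Hom(B,Y)=0$, and then $\Ext^1(j_!Z',Y)=0$ yields a surjection $\Hom(K,Y)\onto\Ext^1(B,Y)$ whose target we have just shown vanishes; but that alone does not kill $\Hom(K,Y)$.

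The trick to close the loop is to bring in the enough-projectives hypothesis on $\cB$: choose a projective cover or merely an epimorphism $P\onto Z''$ with $P$ projective in $\cB$, so that $j_!P$ is projective in $\cA$ (left adjoints preserve projectives), and resolve. Concretely, pick a short exact sequence $0\to Z_1\to P\to Z''\to 0$ in $\cB$ with $P$ projective; then $\Ext^1(j_!Z'',Y)\iso\Coker(\Hom(j_!P,Y)\to\Hom(j_!Z_1,Y))$ and $\Ext^2(j_!Z'',Y)\iso\Ext^1(j_!Z_1,Y)$, the latter being zero by hypothesis for all $Y\in\cC$. Feeding $Z_1$ back into the characterization lemma (its $\Hom$ into $\cC$ vanishes because $P$ projective gives $\Ext^1(j_!P,\cC)=0$ hence $\Hom(j_!Z_1,\cC)$ injects appropriately, and its $\Ext^1$ into $\cC$ vanishes by the $\Ext^2$ hypothesis) shows $j_!Z_1\in j_!(\cB)$; an induction/dimension-shifting argument of this shape, applied to $Z'$ in place of $Z''$, ultimately shows $j_!$ of any monomorphism is a monomorphism. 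I would organize the final write-up as: (i) reduce to showing $j_!$ preserves monomorphisms; (ii) for a monomorphism $Z'\emb Z$ with cokernel $Z''$, use a projective presentation of $Z''$ and the horseshoe lemma together with $j^*j_!\iso\id$ to identify the kernel $K$ as an object of $\cC$ receiving no maps from the relevant $j_!$-images; (iii) invoke $\Ext^2(j_!\cB,\cC)=0$ via dimension shifting to conclude $K=0$. Conversely, for the "only if" direction, if $j_!$ is exact then applying it to a length-one projective resolution $0\to Z_1\to P\to Z''\to 0$ and computing $\Ext^2(j_!Z'',Y)\iso\Ext^1(j_!Z_1,Y)$, one shows the latter vanishes using exactness of $j_!$ plus the characterization lemma (exactness of $j_!$ forces $j_!$ to preserve the short exact sequence, and projectivity of $j_!P$ plus adjunction kills the $\Ext^1$), giving $\Ext^2(j_!Z'',Y)=0$ for all $Z''\in\cB$, $Y\in\cC$.

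The main obstacle I anticipate is the bookkeeping in step (ii)–(iii): isolating exactly which $\Hom$ and $\Ext^1$ groups vanish and in what order, so that the kernel $K$ is squeezed to zero without circularity. The clean statement one wants is a dimension-shifting identity $\Ext^2_{\cA}(j_!Z,Y)\iso\Ext^1_{\cA}(j_!Z_1,Y)$ for $Y\in\cC$, valid because $j_!P$ is projective; once that is in hand, the hypothesis translates directly into $\Ext^1_{\cA}(j_!Z_1,\cC)=0$, and then the characterization lemma for left recollements (already proved in the excerpt) does the rest. So the real content is verifying that $j_!$ preserving projectives is enough to license dimension shifting for $\Ext$-computations with target in $\cC$, which is where the enough-projectives hypothesis on $\cB$ is genuinely used.
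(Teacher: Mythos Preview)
Your ``only if'' direction is correct and matches the paper's argument exactly: dimension-shift along a short exact sequence $0\to Z_1\to P\to Z''\to 0$ with $P$ projective in $\cB$, use that $j_!P$ is projective in $\cA$, and invoke the previous lemma to kill $\Ext^1(j_!Z_1,Y)$.

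For the ``if'' direction you have all the ingredients but miss the final move. You correctly set $K=\Ker(j_!Z'\to j_!Z)$, show $K\in\cC$, and from the sequence $0\to B\to j_!Z\to j_!Z''\to 0$ deduce $\Ext^1(B,Y)=0$ for every $Y\in\cC$. At that point you try to prove $\Hom(K,Y)=0$, notice it does not follow, and detour into a dimension-shifting argument with projectives in $\cB$ that never closes the gap (and is not needed: the paper uses enough projectives only for the other implication). The step you are missing is to specialize $Y=K$: since $K\in\cC$, you have $\Ext^1(B,K)=0$, so the short exact sequence $0\to K\to j_!Z'\to B\to 0$ \emph{splits}, giving $j_!Z'\cong K\oplus B$. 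But $\Hom(j_!Z',K)=0$ by the previous lemma, so the projection onto $K$ is zero, hence $K=0$. This is exactly the paper's proof, and it avoids all the bookkeeping you were worried about in steps (ii)--(iii).
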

\begin{proof}
Let $j_!$ be exact.
Consider an exact sequence $0\to X'\to P\to X\to0$
in $\cB$ with projective~$P$.
For $Y\in\cC$, we have an exact sequence
$\Ext^1(j_!X',Y)\to \Ext^2(j_!X,Y)\to \Ext^2(j_!P,Y)$.
The first term is zero by the previous lemma.
The last term is zero since $j_!$ is left adjoint, hence maps projectives to projectives.

Conversely, given an exact sequence $0\to X'\to X\to X''\to0$ in $\cB$, we have an exact sequence
$j_!X'\xto f j_!X\to j_!X''\to0$
and we need to show that $f$ is a monomorphism.
Let $Y_1=\Ker f$ and $Y_2=\Im f$.
We have an exact sequence
 \[
0\to j^*Y_1\to j^*j_!X'\to j^*j_!X,
 \]
hence $j^*Y_1=0$ and $Y_1\in\cC$.
We have an exact sequence
 \[
0=\Ext^1(j_!X',Y_1)\to \Ext^1(Y_2,Y_1)\to \Ext^2(j_!X'',Y_1)=0,
 \]
hence $\Ext^1(Y_2,Y_1)=0$.
Therefore $j_!X'=Y_1\oplus Y_2$.
But $\Hom(j_!X',Y_1)=0$, hence $Y_1=0$.
\end{proof}

\begin{lemma}
If $\cA'\sbs\cA$ is a full subcategory such that $\Hom(\cC,\cA')=\Hom(\cA',\cC)=0$, then
$j^*:\cA'\to\cB$ is fully faithful.
This is true if $\cA'$ is a Serre subcategory such that $\cA'\cap\cC=0$.
\end{lemma}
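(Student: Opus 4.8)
The plan is to show that for every pair $X, Y \in \cA'$ the map $\Hom_\cA(X, Y) \to \Hom_\cB(j^* X, j^* Y)$, $\psi \mapsto j^*\psi$, is a bijection; since $\cA'$ is a full subcategory of $\cA$, this is exactly full faithfulness of $j^*|_{\cA'}$. I would route this map through the adjunction isomorphism $\Hom_\cA(j_! j^* X, Y) \iso \Hom_\cB(j^* X, j^* Y)$ together with the counit $\ep_X \colon j_! j^* X \to X$ of the adjunction $j_! \dashv j^*$.

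First I would recall from the proof of Lemma \ref{lm:recol} that for every $X \in \cA$ the counit $\ep_X$ becomes an isomorphism in $\cA/\cC$; equivalently (since $\cC = \Ker j^*$ is a Serre subcategory), $\Ker \ep_X$ and $\Coker \ep_X = i_* i^* X$ both lie in $\cC$. If $X \in \cA'$, the canonical epimorphism $X \to i_* i^* X$ is then a morphism from an object of $\cA'$ to an object of $\cC$, hence zero by hypothesis, so $\ep_X$ is an epimorphism and we obtain a short exact sequence
\[ 0 \to K_X \to j_! j^* X \xto{\ep_X} X \to 0, \qquad K_X \in \cC. \]

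Next, for $Y \in \cA'$, applying $\Hom_\cA(-, Y)$ gives the exact sequence
\[ 0 \to \Hom_\cA(X, Y) \xto{\ep_X^*} \Hom_\cA(j_! j^* X, Y) \to \Hom_\cA(K_X, Y), \]
and $\Hom_\cA(K_X, Y) = 0$ since $K_X \in \cC$ and $\Hom(\cC, \cA') = 0$; hence $\ep_X^*$ is an isomorphism. Composing it with the adjunction isomorphism $\Hom_\cA(j_! j^* X, Y) \iso \Hom_\cB(j^* X, j^* Y)$ and using the triangle identity $j^*(\ep_X) \circ \eta_{j^* X} = \id_{j^* X}$ (with unit $\eta \colon \id \to j^* j_!$, an isomorphism because $j_!$ is fully faithful) one checks that the composite carries $\psi$ to $j^*\psi$. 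Being a composite of isomorphisms, $\psi \mapsto j^*\psi$ is a bijection, which proves the first assertion.

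For the last sentence: $\cC = \Ker j^*$ is a Serre subcategory, being the kernel of an exact functor. If $\cA'$ is also Serre with $\cA' \cap \cC = 0$, then any morphism $f$ with one end in $\cA'$ and the other in $\cC$ has image $\Im f$ that is simultaneously a subobject and a quotient of objects lying in $\cA'$ and in $\cC$, whence $\Im f \in \cA' \cap \cC = 0$ and $f = 0$; so $\Hom(\cC, \cA') = \Hom(\cA', \cC) = 0$ and the first part applies. There is no serious obstacle here; the one point that uses the hypothesis essentially — and that genuinely needs $\Hom(\cA', \cC) = 0$ rather than merely $\Hom(\cC, \cA') = 0$ — is the passage to a \emph{short} exact sequence above, i.e.\ the surjectivity of $\ep_X$ for $X \in \cA'$, which is exactly what delivers fullness and not just faithfulness of $j^*|_{\cA'}$.
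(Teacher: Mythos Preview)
Your proof is correct. The argument via the counit $\ep_X\colon j_!j^*X\to X$ is sound: the identification $\Coker\ep_X=i_*i^*X\in\cC$ comes straight from the construction in Lemma~\ref{lm:recol}, the vanishing $\Hom(\cA',\cC)=0$ kills the cokernel, $\Hom(\cC,\cA')=0$ kills the contribution of $K_X$, and the triangle identity confirms that the resulting bijection is indeed $\psi\mapsto j^*\psi$. The treatment of the Serre case is also fine.

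The paper takes a different, slightly more direct route: it invokes the equivalence $\cA/\cC\simeq\cB$ from Lemma~\ref{lm:recol} and then uses the explicit colimit formula for morphisms in a Serre quotient,
\[
\Hom_{\cA/\cC}(X,Y)=\dlim_{X'\subset X,\;Y'\subset Y}\Hom_\cA(X',Y/Y'),
\]
with $X/X'\in\cC$ and $Y'\in\cC$. The hypotheses $\Hom(\cA',\cC)=0$ and $\Hom(\cC,\cA')=0$ force $X'=X$ and $Y'=0$ in this colimit, collapsing it to $\Hom_\cA(X,Y)$. Your approach avoids the Serre-quotient calculus entirely and stays inside $\cA$ using only the adjunction, which makes it somewhat more self-contained; the paper's approach is terser once the quotient description is granted, and makes the role of the two orthogonality hypotheses (one pinning down $X'$, the other $Y'$) more symmetric and transparent.
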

\begin{proof}
For $X,Y\in\cA$, we have
\[\Hom_{\cA/\cC}(X,Y)
=\dlim\Hom_\cA(X',Y/Y')
\]
where the limit is taken over $X'\sbs X$ with $X/X'\in\cC$ and
$Y'\sbs Y$ with $Y'\in\cC$.
If $X,Y\in\cA'$, then $X/X'=0$ and $Y'=0$, hence
$\Hom_{\cA/\cC}(X,Y)=\Hom_\cA(X,Y)$.
\end{proof}

\begin{lemma}\label{simple corr}
If $L\in\cA\ms\cC$ is simple, then $M=j^*L$ is simple and $L$ is a quotient of $j_!M$.
If $L,L'\in\cA\ms\cC$ are simple and non-isomorphic,
then $j^*L,j^*L'$ are non-isomorphic.
If $\cA$ is of finite length, then there is a 1-1 correspondence between isomorphism classes of simple objects in $\cA\ms\cC$ and isomorphism classes of simple objects in $\cB$.

\end{lemma}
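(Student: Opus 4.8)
The plan is to reduce everything to the equivalence $\cA/\cC\iso\cB$ established in Lemma~\ref{lm:recol}, together with the standing facts that $j^*$ is exact, $j^*j_!\iso\id$, and $j^*i_*=0$. I would write $q\colon\cA\to\cA/\cC$ for the localisation functor, so that $j^*$ is $q$ followed by an equivalence.

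For the first assertion, let $L\in\cA\ms\cC$ be simple; then $M=j^*L\ne0$ because $L\notin\cC=\Ker j^*$. To see that $M$ is simple I would take a nonzero subobject $M'\sbs M$ and feed the inclusion $M'\emb M$ through the adjunction isomorphism $\Hom_\cA(j_!M',L)\iso\Hom_\cB(M',M)$ to obtain a nonzero morphism $j_!M'\to L$; simplicity of $L$ forces it to be an epimorphism, and applying the exact functor $j^*$ (and $j^*j_!\iso\id$) recovers $M'\emb M$ as an epimorphism, so $M'=M$. Taking instead $M'=M$ and tracking $\id_M$ through the same adjunction gives a nonzero, hence epic, morphism $j_!M\to L$, which shows $L$ is a quotient of $j_!M$. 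The one delicate point here is that $j^*$ applied to the morphism $j_!M'\to L$ genuinely returns the original inclusion — this is a triangle identity for $j_!\dashv j^*$.

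For the second assertion, let $L,L'\in\cA\ms\cC$ be simple and non-isomorphic; by the first part $j^*L,j^*L'$ are simple and nonzero. Using that $\cA/\cC\to\cB$ is an equivalence I would write $\Hom_\cB(j^*L,j^*L')\iso\Hom_{\cA/\cC}(qL,qL')$ and then evaluate the right-hand side by the colimit formula for Hom-sets in a Serre quotient recalled earlier, namely $\dlim\Hom_\cA(L'',L'/L''')$ over $L''\sbs L$ with $L/L''\in\cC$ and $L'''\sbs L'$ with $L'''\in\cC$. Since $L$ is simple and not in $\cC$ the only admissible $L''$ is $L$, and likewise the only admissible $L'''$ is $0$, so the colimit collapses to $\Hom_\cA(L,L')$, which vanishes by Schur's lemma. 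As $j^*L\ne0$, this yields $j^*L\not\iso j^*L'$.

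For the last assertion, assume $\cA$ has finite length. The first two parts give an injective assignment $L\mapsto j^*L$ from isomorphism classes of simple objects in $\cA\ms\cC$ to isomorphism classes of simple objects of $\cB$, so only surjectivity remains. Given a simple $M\in\cB$, I would note $j_!M\ne0$ since $j^*j_!M\iso M$, use finite length of $\cA$ to choose a simple quotient $\pi\colon j_!M\onto L$, and deduce $L\notin\cC$ from $j^*i_*=0$: if $L\iso i_*L'$ then $\pi$ would live in $\Hom_\cA(j_!M,i_*L')\iso\Hom_\cB(M,j^*i_*L')=0$. Then $j^*L$ is simple by the first part, and applying the exact functor $j^*$ to $\pi$ (using $j^*j_!M\iso M$) produces a surjection $M\onto j^*L$, necessarily an isomorphism because $M$ is simple and $j^*L\ne0$. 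I expect no genuine obstacle beyond this bookkeeping — the triangle-identity identification in the first part and the collapse of the Serre-quotient colimit to a single term in the second — since the real content is already carried by the equivalence $\cA/\cC\iso\cB$.
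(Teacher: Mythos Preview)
Your proof is correct and follows essentially the same line as the paper's. The only cosmetic difference is in the second assertion: the paper invokes the immediately preceding lemma (that $j^*$ is fully faithful on a Serre subcategory $\cA'$ with $\cA'\cap\cC=0$, applied to the Serre subcategory generated by $L,L'$), whereas you unpack the Serre-quotient $\Hom$-colimit directly; the content is identical.
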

\begin{proof}
For a nonzero subobject $N\emb M=j^*L$, the corresponding map $j_!N\to L$ is nonzero, hence an epimorphism.
Therefore $N\iso j^*j_!N\to j^*L$ is an epimorphism and $N=M$.
This implies that $M$ is simple.
We have seen that $j_!M\to L$ is an epimorphism.
Let $\cA'\sbs\cA$ be the Serre subcategory generated by $L,L'$.
Then $\cA'\cap\cC=0$, hence $\Hom(L,L')\iso\Hom(j^*L,j^*L')$.
If $j^*L,j^*L'$ are isomorphic, then $\Hom(L,L')\ne0$, hence $L,L'$ are isomorphic.

Assume that $\cA$ is of finite length.
For a simple object $M\in\cB$, let $L$ be a simple quotient of $j^!M$.
A nonzero map $j^!M\to L$ corresponds to a nonzero map $M\to j^*L$.
We have seen that $j^*L$ is simple, hence $j^*L\iso M$.
This implies that $j^*$ induces a bijection between the corresponding sets of isomorphism classes.
\end{proof}

\subsection{Left stratifications}
\label{left strat}
We define a \idef{left stratification} of an abelian category $\cA$
by a lower finite poset $\La$ to be a collection of Serre subcategories $(\cA_\Om)_{\Om\sbs\La}$ for finite lower subsets $\Om\sbs\La$ and a collection of abelian categories $\cA_\la$ (called \idef{strata categories}) for $\la\in\La$ such that
the collection $(\cA_\Om)_{\Om\sbs\La}$ is order preserving (if $\Om\sbs\Om'$, then $\cA_{\Om}\sbs\cA_{\Om'}$),
$\cA_\es=0$, $\bigcup_\Om\cA_\Om=\cA$, and for all finite lower subsets $\Om\sbs\La$ and $\la\in\max\Om$ we have compatible left recollements of abelian categories
\[\cA_{\Om\ms\la}\xto{i_*}\cA_\Om\xto{j^*}\cA_\la\]
The above data is called a \idef{stratification} if all sequences are recollements (\cf \cite[\S2.1]{wiggins_stratified}).

\begin{remark}
The above conditions on a left stratification imply that,
for a finite lower subset $\Om\sbs\La$, the embedding $i_*:\cA_\Om\to\cA$ has a left adjoint $i^*:\cA\to\cA_\Om$ (such that $i^*i_*\iso\id$).
For compatibility we require that
$\cA_\la\xto{j_!}\cA_\Om$
is isomorphic to
$\cA_\la\xto{j_!^\la}\cA_{\le\la}\xto{i_*}\cA_\Om$,
for $\la\in\max\Om$.
This implies $i^*j_!\iso j_!^\la$
which is equivalent to $j^*i_*\iso j_\la^*$.
\[\begin{tikzcd}
\cA_{\le\la}\rar[shift right=1,"i_*"']\rar[<-,shift left=1, "i^*"]
\ar[rr,bend right,"j^*_\la"{description}]
\ar[rr,<-,bend left,"j_!^\la"{description}]
&\cA_{\Om}\rar[shift right,"j^*"']\rar[<-,shift left,"j_!"]&\cA_\la
\end{tikzcd}\]
Our notion of a left stratified category is related to the notion of a highest weight category
in~\cite{khoroshkin2015highestweightcategoriesmacdonald}
and a (standardly) stratified category in \cite{MR3338680},
although in these papers one only requires existence of the left adjoint $j^\la_!:\cA_\la\to\cA_{\le\la}$ for the projection
$j^*_\la:\cA_{\le\la}\to\cA_\la:=\cA_{\le\la}/\cA_{<\la}$.
Note that in \cite{MR3338680} the authors require $j^\la_!$
to be exact while we require it to be fully faithful.
\end{remark}

\begin{remark}
If $\la\not\le\mu$, consider $\Om=\La_{\le\la}\cup\La_{\le\mu}$ with $\la\in\max\Om$.
Then $j_!\cA_\mu\sbs\cA_{\Om\ms\la}$, hence
\begin{equation}\label{orthog1}
\Hom(j_!\cA_\la,j_!\cA_\mu)=0,\qquad
\Ext^1(j_!\cA_\la,j_!\cA_\mu)=0,\qquad
\forall \la\not\le\mu.
\end{equation}
\end{remark}

\begin{example}
Let $(X,\cO)$ be a ringed space and $X=\bigsqcup_{\la\in\La}X_\la$ be a partition of $X$ for a finite poset~$\La$ such that the closures of the parts are $\ubar X_\la=\bigsqcup_{\mu\le\la}X_\mu$.
As in Example~\ref{ex:recol},
we consider $\cA=M(X,\cO)$, $\cA_\Om=M(X_\Om,\cO)$ for a lower subset $\Om\sbs\La$ and the closed set $X_\Om=\bigsqcup_{\la\in\Om}X_\la$, and $\cA_\la=M(X_\la,\cO)$.
For $\la\in\max\Om$, the subset $X_\la\sbs X_\Om$ is open in $X_\Om$ and $X_{\Om\ms\la}\sbs X_\Om$ is closed in $X_\Om$.
The recollements
$\cA_{\Om\ms\la}\xto{i_*}\cA_\Om\xto{j^*}\cA_\la$
form a stratification of $\cA$.
Considering $j:X_\la\xto{j^\la}X_{\le\la}\xto i X_\Om$,
we obtain $i_*j_!^\la=i_!j^\la_!=(ij^\la)_!=j_!$
which proves compatibility.
\end{example}

\begin{lemma}
For a simple object $L\in\cA$, there exists a unique $\la\in\La$ such that $L\in\cA_{\le\la}\ms\cA_{<\la}$.
\end{lemma}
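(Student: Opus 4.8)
The plan is to use the given axioms of a left stratification to pin down the unique stratum containing a simple object $L\in\cA$. Since $\cA=\bigcup_\Om\cA_\Om$ over finite lower subsets $\Om\sbs\La$, there is some finite lower $\Om$ with $L\in\cA_\Om$; choose such an $\Om$ that is minimal (possible since $\La$ is lower finite and $\cA_\Om$ is order preserving in $\Om$). First I would show $\Om$ has a unique maximal element. Pick $\la\in\max\Om$ and look at the left recollement $\cA_{\Om\ms\la}\xto{i_*}\cA_\Om\xto{j^*}\cA_\la$. Since $L$ is simple, $j^*L$ is either zero or (by Lemma \ref{simple corr}) simple. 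If $j^*L=0$ then $L\in\Ker j^*$, and since $i_*:\cA_{\Om\ms\la}\to\Ker j^*$ is an equivalence, $L$ lies in $\cA_{\Om\ms\la}$, contradicting minimality of $\Om$ (as $\Om\ms\la$ is again a finite lower subset). Hence $j^*L\ne0$ for \emph{every} $\la\in\max\Om$; but if $\la,\la'$ were two distinct maximal elements, then $\la'\in\Om\ms\la$, and running the recollement for $\la$ shows $L\notin\cA_{\Om\ms\la}\supseteq\cA_{\le\la'}$-related obstruction — more directly, $j^*L\ne0$ for $\la$ forces, via the compatibility $j^*i_*\iso j^*_\la$ together with the orthogonality considerations, that $L$ cannot also be supported over $\la'$. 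I would package this: with $\la\in\max\Om$ fixed and $j^*L\ne0$ simple in $\cA_\la$, the object $L$ is a quotient of $j_!j^*L$ (again Lemma \ref{simple corr}); since $j_!\iso i_*j_!^\la$ for the compatible data, $L$ is a quotient of something in $i_*(\cA_{\le\la})$, so the image of $L$ lies in $\cA_{\le\la}$. As $L$ is simple and $\cA_{\le\la}$ is a Serre subcategory, $L\in\cA_{\le\la}$; minimality of $\Om$ then gives $\Om=\La_{\le\la}$, so $\max\Om=\{\la\}$ is forced and $\la$ is unique.

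Having produced $\la$ with $L\in\cA_{\le\la}$, the next step is to show $L\notin\cA_{<\la}$. This is precisely the minimality of $\Om=\La_{\le\la}$: if $L$ were in $\cA_{<\la}$, then since $\La_{<\la}=\bigcup_{\mu<\la}\La_{\le\mu}$ and $\cA$ is the filtered union of the $\cA_{\Om'}$, simplicity of $L$ (hence finite-generation-type behaviour — actually one just uses that a simple object lying in a union of Serre subcategories indexed by a lower finite poset lies in one of finite type) would place $L$ in $\cA_{\le\mu}$ for some $\mu<\la$, contradicting the minimality of $\Om$. Thus $L\in\cA_{\le\la}\ms\cA_{<\la}$.

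For uniqueness: suppose $L\in\cA_{\le\la}\ms\cA_{<\la}$ and also $L\in\cA_{\le\mu}\ms\cA_{<\mu}$. If $\la\ne\mu$, consider $\Om=\La_{\le\la}\cup\La_{\le\mu}$, a finite lower subset, with (say, after possibly swapping) $\la\notin\{\text{nothing below}\}$ — more carefully, if $\la\not\le\mu$ then $\la\in\max\Om$ is a maximal element of $\Om$, and $\cA_{\le\mu}\sbs\cA_{\Om\ms\la}$, so $L\in\cA_{\Om\ms\la}=\Ker(j^*:\cA_\Om\to\cA_\la)$, i.e.\ $j^*L=0$. On the other hand, from $L\in\cA_{\le\la}$ and the compatibility $j^*|_{\cA_{\le\la}}\iso j^*_\la$, we get $j^*_\la L=0$, meaning $L\in\cA_{<\la}$, a contradiction. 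By symmetry the case $\mu\not\le\la$ is excluded too, and since $\la\ne\mu$ cannot satisfy both $\la\le\mu$ and $\mu\le\la$, we are done.

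The main obstacle I expect is the existence argument, specifically the passage from ``$L$ lies in some $\cA_\Om$'' to ``$L$ lies in a \emph{minimal} such $\cA_\Om$, which is of the form $\cA_{\le\la}$''. The subtlety is that minimality over all finite lower $\Om$ need not obviously yield a single maximal element without invoking the recollement to kill the case $j^*L=0$, and one must be careful that the union $\cA=\bigcup_\Om\cA_\Om$ is filtered along the lower finite poset $\La$ so that a simple object genuinely sits in one member rather than only in an inverse/direct limit; here simplicity is what makes this work, combined with the order-preserving property of $\Om\mapsto\cA_\Om$. Everything else is a formal consequence of the recollement axioms, Lemma \ref{simple corr}, and the compatibility relation $j^*i_*\iso j^*_\la$ already recorded in the excerpt.
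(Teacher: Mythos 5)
Your proof is correct and follows essentially the same route as the paper's: take a minimal finite lower subset $\Om$ with $L\in\cA_\Om$, show $j^*L\ne0$ via the recollement for a maximal $\la\in\Om$, use $j_!j^*L\twoheadrightarrow L$ and the Serre property of $\cA_{\le\la}$ to conclude $L\in\cA_{\le\la}$ (hence $\Om=\La_{\le\la}$ by minimality), and prove uniqueness by passing to $\Om=\La_{\le\la}\cup\La_{\le\mu}$ with $\la\not\le\mu$ and deriving $j^*_\la L=0$. The only difference is that your first paragraph detours through trying to show directly that $\Om$ has a unique maximal element, and the second paragraph worries about filtered unions; both are unnecessary, since $\cA_{<\la}=\cA_{\La_{<\la}}$ is already one of the $\cA_{\Om'}$ and $\Om=\La_{\le\la}$ automatically has a unique maximum once established.
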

\begin{proof}
Let $\Om\sbs\La$ be a minimal lower subset such that $L\in\cA_\Om$ and let $\la\in\max\Om$.
Consider the left recollement
$\cA_{\Om\ms\la}\emb\cA_\Om\xto{j^*}\cA_\la$.
If $j^*L=0$, then $L\in\cA_{\Om\ms\la}$, a contradiction.
Therefore $j^*L\ne0$, hence $j_!j^*L\to L$ is nonzero.
Since $j_!j^*L\in j_!(\cA_\la)\sbs\cA_{\le\la}$ and $\cA_{\le\la}$ is a Serre subcategory, we obtain $L\in\cA_{\le\la}$ and $L\notin\cA_{<\la}$.

If we also have $L\in\cA_{\le\mu}\ms\cA_{<\mu}$, we can assume without loss of generality that $\la\not\le\mu$, hence $\la$ is maximal in $\Om=\La_{\le\la}\cup\La_{\le\mu}$.
Then $j^*:\cA_\Om\to\cA_\la$ satisfies $j^*(L)=0$ since $\cA_{\le\mu}\sbs\cA_{\Om\ms\la}$.
Therefore $j_\la^*L=0$ and $L\in\cA_{<\la}$, a contradiction.
\end{proof}

\oper{Irr}
Let $\Irr(\cA)$ denote the set of isomorphism classes of simples objects in $\cA$.
Then we have a commutative diagram
\begin{equation}\label{Irr diagram}
\begin{tikzcd}
\Irr(\cA)\ar[rr,"\vi"]\ar[dr,"\rho"']&&\bigsqcup_{\la\in\La}\Irr(\cA_\la)\ar[dl]\\
&\La
\end{tikzcd}
\end{equation}
where, for $L\in\Irr(\cA)$ contained in $\cA_{\le\la}\ms\cA_{<\la}$,
we define $\rho(L)=\la$ and $\vi(L)=j^*_\la(L)\in\Irr(\cA_\la)$.
It follows from Lemma \ref{simple corr} that the map $\vi$ is injective and that it is bijective if $\cA$ is of finite length.
Let $\Te$ be a set parameterizing isomorphism classes of simple objects in $\cA$ (so that we have a bijection $L:\Te\to\Irr(\cA)$).
For $\te\in\Te$, we denote $\la=\rho L(\te)$ by $\rho(\te)$ and we denote $\vi L(\te)\in\cA_\la$ by $L_\la(\te)$.
We define the \idef{proper standard object}
(\cf \cite{khoroshkin2015highestweightcategoriesmacdonald,MR3338680,wiggins_stratified})
\[\ubar\De(\te)=j_! L_\la(\te)\in\cA_{\le\la}.\]
Assuming that $L_\la(\te)\in\cA_\la$ has a projective cover $P_\la(\te)\in\cA_\la$, we define the \idef{standard object}
\[\De(\te)=j_!P_\la(\te)\in\cA_{\le\la}.\]
We say that a left stratified category
is \idef{standardly stratified}
if the projective cover $P(\te)$ of $L(\te)\in\cA$ has a filtration by standard objects such that the top factor is $\De(\te)$ and all other factors $\De(\te')$ satisfy $\rho(\te')>\rho(\te)$.
Note that under certain mild restrictions, every stratified category is standardly stratified
\cite[\S2.4]{wiggins_stratified}, but this will be not our case.

\subsection{The algebra \texorpdfstring{$\A_\la$}{Alambda}}
\label{sec:Weylfunctor}



\begin{lemma}
There is a uniquely defined surjective algebra homomorphism
\[\vi:U(\bfh)^\op\to\End_{\bfg}(W(\la))\]
such that $\vi_a(v_\la)=av_\la$ for $a\in U(\bfh)$.
\end{lemma}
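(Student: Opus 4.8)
The plan is to construct the homomorphism $\vi$ explicitly and then verify the three required properties: well-definedness, that it is an algebra map, and surjectivity. First I would recall that $W(\la)$ is generated by the cyclic vector $v_\la$, with $\bfn_+ v_\la = 0$ and $h v_\la = \la(h)v_\la$ for $h \in \fh$. For each $a \in U(\bfh)$, I want to define an endomorphism $\vi_a \in \End_{\bfg}(W(\la))$ by $\vi_a(x v_\la) = x\, a\, v_\la$ for $x \in U(\bfg)$. The key point is that this is well-defined: if $x v_\la = 0$ in $W(\la)$, then $x a v_\la = 0$ as well, because $a \in U(\bfh)$ commutes with nothing in general, but here I should instead note that $\vi_a$ is right multiplication by $a$ on the cyclic module, and since $\bfh$ normalizes $\bfn_+$ and acts on $\fh$ by the adjoint action, the defining relations of $W(\la)$ (namely $\bfn_+ v_\la=0$, $h v_\la = \la(h) v_\la$, and $f_i^{\la(h_i)+1} v_\la = 0$) are preserved under right multiplication by $U(\bfh)$. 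More precisely, one uses that $W(\la) = \Bind_{\le\la}(L(\la))$ is integrable and $\la$-bounded, so that $\vi_a(W(\la))$ is again a $\bfg$-submodule with the same cyclicity and boundedness, and the universal property of Lemma~\ref{lm:weyl prop} gives the map canonically.

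The cleanest route, which I would take, is to invoke Lemma~\ref{lm:weyl prop}: for $V \in \IIx\bfg{\le\la}$ we have $\Hom_\bfg(W(\la),V) \iso V_\la$. Applying this with $V = W(\la)$ itself (which lies in $\IIx\bfg{\le\la}$), we get $\End_\bfg(W(\la)) \iso W(\la)_\la$. Now $W(\la)_\la$ is a module over $U(\bfh)$, since $\bfh = \bfg_0$ preserves each weight space; moreover $U(\bfh) v_\la \sbs W(\la)_\la$, and in fact $W(\la)_\la = U(\bfh) v_\la$ because $W(\la)$ is generated over $U(\bfg) = U(\bfn_-)U(\bfh)U(\bfn_+)$ by $v_\la$ and $\bfn_+ v_\la = 0$, so $W(\la) = U(\bfn_-)U(\bfh)v_\la$ and only the terms from $U(\bfh)$ can land in weight $\la$ (any nonconstant monomial in $U(\bfn_-)$ strictly lowers the weight). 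So the isomorphism $\End_\bfg(W(\la)) \iso W(\la)_\la$, combined with the surjection $U(\bfh) \onto W(\la)_\la$, $a \mapsto a v_\la$, gives the surjective linear map $\vi : U(\bfh) \to \End_\bfg(W(\la))$ with $\vi_a(v_\la) = a v_\la$.

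It remains to check that $\vi$ is an algebra \emph{anti}homomorphism, i.e.\ a homomorphism $U(\bfh)^\op \to \End_\bfg(W(\la))$. This is where I would be slightly careful: for $a, b \in U(\bfh)$, the composite $\vi_a \circ \vi_b$ sends $v_\la \mapsto \vi_a(b v_\la)$. Writing $b v_\la = \sum_k x_k v_\la$ for suitable $x_k \in U(\bfg)$ (one can in fact keep $x_k \in U(\bfh)$), we have $\vi_a(x_k v_\la) = x_k a v_\la$, so $\vi_a\vi_b(v_\la) = b a v_\la = \vi_{ba}(v_\la)$; since endomorphisms of a cyclic module are determined by the image of the cyclic generator, $\vi_a \vi_b = \vi_{ba}$, which is exactly the anti-homomorphism property. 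Uniqueness is immediate: any $\psi: U(\bfh)^\op \to \End_\bfg(W(\la))$ with $\psi_a(v_\la) = a v_\la$ is forced on all of $W(\la)$ by $\bfg$-equivariance and cyclicity. The main obstacle, such as it is, is the identification $W(\la)_\la = U(\bfh)v_\la$ and the bookkeeping that right multiplication by $U(\bfh)$ descends to the quotient defining $W(\la)$; both follow from the triangular decomposition $\bfg = \bfn_- \oplus \bfh \oplus \bfn_+$ together with the fact that $\bfh$ preserves weights and normalizes $\bfn_{\pm}$, so no genuinely hard estimate is needed.
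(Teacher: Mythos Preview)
Your proposal is correct and follows essentially the same route as the paper: invoke Lemma~\ref{lm:weyl prop} to identify $\End_{\bfg}(W(\la))\iso W(\la)_\la$, check the anti-homomorphism property via $\vi_a\vi_b(v_\la)=\vi_a(bv_\la)=bav_\la=\vi_{ba}(v_\la)$, and deduce surjectivity from the surjection $U(\bfh)\to W(\la)_\la$, $a\mapsto av_\la$. The only difference is cosmetic: you spell out, via the PBW decomposition $U(\bfg)=U(\bfn_-)U(\bfh)U(\bfn_+)$ and the fact that $\bfn_+v_\la=0$, why $W(\la)_\la=U(\bfh)v_\la$, whereas the paper simply asserts that $a\mapsto av_\la$ is onto; your initial paragraph about right multiplication descending to the quotient is unnecessary once you take the Lemma~\ref{lm:weyl prop} route, but it does no harm.
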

\begin{proof}
We have $\End_{\bfg}(W(\la))\iso W(\la)_\la$
by Lemma \ref{lm:weyl prop}.
For every $a\in U(\bfh)$ we obtain
$\vi_a\in\End_{\bfg}(W(\la))$ specified by
$\vi_a(v_\la)=av_\la\in W(\la)_\la$.
For $a,b\in U(\bfh)$, we have
\[\vi_a\vi_b(v_\la)=\vi_a(bv_\la)=bav_\la=\vi_{ba}(v_\la),\]
hence $\vi:U(\bfh)^\op\to\End_{\bfg}(W(\la))$
is an algebra homomorphism.
The composition
 \[
U(\bfh)\xto\vi \End_{\bfg}(W(\la))\isoto W(\la)_\la
 \]
is given by $a\mto av_\la$ which is a surjective map.
Therefore $\vi$ is also surjective.
\end{proof}

We consider the ideal
$\sJ_\la=\Ker\vi
=\Ann_{U(\bfh)}(v_\la)$
and define the algebra
 \[
\A_\la:=U(\bfh)/\sJ_\la\iso\End_{\bfg}(W(\la))^\op.
\]

We see that the Weyl module $W(\la)$ is a $U(\bfg)$-$\A_\la$-bimodule
(with $(uv_\la)a=\vi_a(uv_\la)=uav_\la$ for $u\in U(\bfg)$ and $a\in\A_\la$)
and each weight space $W(\la)_\mu$ is an $\A_\la$-bimodule.
Moreover,
we have an isomorphism of $\A_\la$-bimodules
 \[
\A_\la\isoto W(\la)_\la,\qquad a\mto av_\la.
 \]
Let $\Mod\A_\la$ denote the category of left modules over $\A_\la$.
By analogy with \cite[\S3.4]{MR2718936}, we define the \idef{Weyl functor}
\[
\W_\la\colon \Mod \A_\la\to \IIx\bfg{\le\la},\qquad
M\mto W(\la)\otimes_{\A_\la}M.
 \]

The following result generalizes \cite[\S3]{MR2718936}.

\begin{lemma}
The functor $\W_\la:\Mod \A_\la\to \IIx\bfg{\le\la}$
is left adjoint to the functor
\[\HW_\la:\IIx\bfg{\le\la}\to\Mod\A_\la,\qquad
V\mto V_\la \iso\Hom_{\bfg}(W(\la),V).\]
The adjunction morphism $\id\to\R_\la\W_\la$ is an isomorphism,
meaning that $\W_\la$ is fully faithful.
\end{lemma}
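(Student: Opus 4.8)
The plan is to verify the adjunction by directly computing Hom-sets, using the universal property of the tensor product $\otimes_{\A_\la}$ together with Lemma~\ref{lm:weyl prop}. Let $M\in\Mod\A_\la$ and $V\in\IIx\bfg{\le\la}$. I would start from
\[
\Hom_{\bfg}\bigl(\W_\la(M),V\bigr)
=\Hom_{\bfg}\bigl(W(\la)\otimes_{\A_\la}M,V\bigr).
\]
Since $W(\la)$ is a $U(\bfg)$-$\A_\la$-bimodule, the standard tensor-hom adjunction identifies the right-hand side with $\Hom_{\A_\la}\bigl(M,\Hom_{\bfg}(W(\la),V)\bigr)$, where $\Hom_{\bfg}(W(\la),V)$ carries its natural left $\A_\la$-module structure coming from the \emph{right} $\A_\la$-action on $W(\la)$. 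By Lemma~\ref{lm:weyl prop}, $\Hom_{\bfg}(W(\la),V)\iso V_\la$ as vector spaces, and one must check this isomorphism is $\A_\la$-linear: the right action of $a\in\A_\la=U(\bfh)/\sJ_\la$ on $W(\la)$ is $uv_\la\mapsto uav_\la$, and under the identification $\Hom_{\bfg}(W(\la),V)\to V_\la$, $\varphi\mapsto\varphi(v_\la)$, precomposition with this right action sends $\varphi$ to $\varphi(\cdot\, a)$, i.e.\ to the map $v_\la\mapsto \varphi(av_\la)=a\varphi(v_\la)$, which is exactly the $\bfh$-action (through $U(\bfh)\twoheadrightarrow\A_\la$) on $V_\la\subseteq V$. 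So the bijection is natural in $M$ and in $V$, giving the desired adjunction $\W_\la\dashv\HW_\la$ with $\HW_\la(V)=V_\la$.

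For the second assertion, I would compute the unit $\id\to\HW_\la\W_\la$ explicitly. Its value on $M$ is the $\A_\la$-linear map
\[
M\longrightarrow \HW_\la\W_\la(M)=\bigl(W(\la)\otimes_{\A_\la}M\bigr)_\la,\qquad
m\longmapsto v_\la\otimes m,
\]
using that $W(\la)_\la=\A_\la v_\la\iso\A_\la$, so that the weight-$\la$ part of $W(\la)\otimes_{\A_\la}M$ is $W(\la)_\la\otimes_{\A_\la}M\iso \A_\la\otimes_{\A_\la}M\iso M$. Tracing through, the composite $M\to (W(\la)\otimes_{\A_\la}M)_\la\iso M$ is the identity, so the unit is an isomorphism, hence $\W_\la$ is fully faithful. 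Here I would invoke the decomposition $W(\la)=\bigoplus_{\mu\le\la}W(\la)^{(\mu)}\otimes L(\mu)$ as a $\fg$-module, from which $\bigl(W(\la)\otimes_{\A_\la}M\bigr)_\la$ picks out exactly the contribution of the weight-$\la$ line in $W(\la)^{(\la)}\otimes L(\la)$, together with the bimodule isomorphism $\A_\la\isoto W(\la)_\la$ already recorded before the statement.

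The main point requiring care—really the only obstacle—is the bookkeeping of \emph{which} $\A_\la$-action is used where: $W(\la)$ has a right $\A_\la$-action defining the tensor product $\W_\la$, and the same right action induces the left $\A_\la$-structure on $\Hom_{\bfg}(W(\la),V)\iso V_\la$ that makes $\HW_\la$ land in $\Mod\A_\la$; one must confirm these match up so that the tensor-hom adjunction applies verbatim and the resulting bijection is $\A_\la$-linear rather than merely linear. Once the left/right conventions are pinned down consistently with the bimodule structure $(uv_\la)a=uav_\la$ fixed above, both claims follow from general nonsense (tensor-hom adjunction) plus the two already-established facts $\Hom_{\bfg}(W(\la),V)\iso V_\la$ and $W(\la)_\la\iso\A_\la$. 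I do not expect any genuinely new input beyond the preceding lemmas.
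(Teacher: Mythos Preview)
Your proposal is correct and follows essentially the same route as the paper: the adjunction is the tensor--hom adjunction for the $U(\bfg)$-$\A_\la$-bimodule $W(\la)$ combined with Lemma~\ref{lm:weyl prop}, and the unit is an isomorphism because $(W(\la)\otimes_{\A_\la}M)_\la=W(\la)_\la\otimes_{\A_\la}M\iso\A_\la\otimes_{\A_\la}M\iso M$. Your extra care in matching the left/right $\A_\la$-actions is well placed but does not deviate from the paper's argument.
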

\begin{proof}
By Lemma \ref{lm:weyl prop}, we have
$\HW_\la(V)=V_\la\iso \Hom_{\bfg}(W(\la),V)$
and it is a left module over $\A_\la\iso\End_{\bfg}(W(\la))^\op$.
Explicitly, for $a\in\A_\la$ and $f\in\Hom_\bfg(W(\la),V)$
with $v=f(v_\la)$, we have
$(af)(v_\la)=(f\vi_a)(v_\la)=f(av_\la)=av$.
The claim on the adjunction follows from
$\Hom_{\bfg}(\W_\la(M),V)
\iso\Hom_{\A_\la}(M,\Hom_{\bfg}(W(\la),V))
\iso\Hom_{\A_\la}(M,\HW_\la(V))$.
We have $\R_\la\W_\la(M)=(W(\la)\ts_{\A_\la}M)_\la
=W(\la)_\la\ts_{\A_\la}M\iso M$ since $W(\la)_\la\iso\A_\la$.
\end{proof}

\begin{remark}
If $V\in\II(\bfg)$ and $\la\in \max P(V)$, then
$\Hom_{\bar\fg}(W(\la),V)\iso\Hom_{\fg}(L(\la),V)\iso V_\la$.
Therefore, in the same way as above,
we can prove for a finite lower subset $\Om\sbs P_+$
and $\la\in\max\Om$, that the functor
$\W_\la:\Mod\A_\la\to\IIx\bfg\Om$, $M\mto W(\la)\ts_{\A_\la}M$,
is left adjoint to the functor
$\R_\la:\IIx\bfg\Om\to\Mod\A_\la$, $V\mto V_\la$.
\end{remark}

Using the experimental data presented in Section \ref{sec:globalH2}, one can obtain some information about the algebras $\A_\la$ in the case $\bfg=L_0(\HH_2)$. In particular, it is easy to see that, contrary to the case of affine algebras \cite{MR1850556},
the global Weyl module $W(\la)$ is generally not a free $\A_\la$-module.

\begin{example}
Let us consider the global Weyl module $W(4)$ for the algebra $\bfg=L_0(\HH_2)$. As indicated in Section \ref{sec:globalH2}, this is a finite-dimensional vector space of dimension $31$. Since the multiplicity of weight $4$ in this module is equal to $2$, the algebra $\A_4$ is two-dimensional, and the $\A_4$-module $W(4)$ is not free.
\end{example}

\begin{theorem}\label{th:strat1}
The category $\cA=\IB(\bfg)$ is left stratified by the poset $\La=P_+$ with the Serre subcategories $\cA_\Om=\II_\Om(\bfg)$ for finite lower subsets $\Om\sbs\La$, the strata categories $\cA_\la=\Mod\A_\la$ for $\la\in\La$,
and the left recollement for $\la\in\max\Om$ (where $i_*$ is the embedding)
\[\cA_{\Om\ms\la}\xto{i_*}\cA_\Om\xto{j^*=\R_\la}\cA_\la=\Mod\A_\la\]
\end{theorem}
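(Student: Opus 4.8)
The plan is to verify the three axioms of a left stratification from Section~\ref{left strat}: that the $\cA_\Om$ are Serre subcategories forming an order-preserving system with $\cA_\es=0$ and $\bigcup_\Om\cA_\Om=\cA$, and that for $\la\in\max\Om$ we have a compatible left recollement $\cA_{\Om\ms\la}\xto{i_*}\cA_\Om\xto{j^*}\cA_\la$. The identification $\cA_\la=\Mod\A_\la$ with $j^*=\R_\la$ is then forced by Lemma~\ref{lm:recol} together with the fully faithful adjoint produced by the Weyl functor.

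First I would check the elementary structural claims. Each $\II_\Om(\bfg)$ is the kernel of the exact functor $\bta_\Om\colon\II(\bfg)\to\II(\fg)$ on the underlying $\fg$-module (using that $\bfg$-module structure descends along the $\fg$-isotypic decomposition), hence is a Serre subcategory of $\II(\bfg)$; intersecting with $\IB(\bfg)$ keeps it Serre in $\cA$. Order-preservation is immediate from $\Om\sbs\Om'\Rightarrow\bta_{\Om'}\le\bta_\Om$, and $\cA_\es=0$ since an object with no $\fg$-summands is zero. For $\bigcup_\Om\cA_\Om=\cA$: if $V\in\IB(\bfg)$ then $V$ is bounded over $\fg$, so by Lemma~\ref{lm:int=ib} the set $P(V)$ is finite, hence the set of dominant weights $\mu$ with $V^{(\mu)}\ne0$ is finite and contained in some finite lower subset $\Om$ of $P_+$; then $V=\ta_\Om V$, i.e.\ $V\in\cA_\Om$.

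Next, for $\la\in\max\Om$, set $\cC=\cA_{\Om\ms\la}$, $j^*=\R_\la\colon\cA_\Om\to\Mod\A_\la$ given by $V\mapsto V_\la$. This is exact (taking a weight space is exact). Its left adjoint is the Weyl functor $\W_\la\colon\Mod\A_\la\to\IIx\bfg\Om$, $M\mapsto W(\la)\ts_{\A_\la}M$, which by the Remark following the previous lemma lands in $\IIx\bfg\Om$ and is left adjoint to $\R_\la$ on $\IIx\bfg\Om$, and is \emph{fully faithful} since $\id\to\R_\la\W_\la$ is an isomorphism (that Remark again). Every object of $\IIx\bfg\Om$ is automatically bounded over $\fg$ (its $\fg$-summands range over the finite set $\Om$), so $\IIx\bfg\Om=\cA_\Om$ and $\W_\la(\Mod\A_\la)\sbs\cA_\Om$. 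It remains to check $\Ker\R_\la=\cC$: if $V\in\cA_\Om$ has $V_\la=0$ then, since $\la\in\max\Om$ and $V$ is integrable, no summand $L(\la)$ occurs (its highest weight vector would lie in $V_\la$), so $V^{(\la)}=0$ and $V=\ta_{\Om\ms\la}V\in\cC$; the converse is clear. By Lemma~\ref{lm:recol} applied to $j^*=\R_\la$ with fully faithful left adjoint $j_!=\W_\la$, the embedding $i_*\colon\cC\emb\cA_\Om$ acquires a left adjoint and $\cA_\Om/\cC\isoto\Mod\A_\la$, giving the required left recollement.

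Finally, compatibility: for $\la\in\max\Om$ one must check that $\W_\la\colon\Mod\A_\la\to\cA_\Om$ factors (up to natural isomorphism) as $\Mod\A_\la\xto{\W_\la}\cA_{\le\la}\xto{i_*}\cA_\Om$, i.e.\ $j_!=i_*\circ j_!^\la$. But $W(\la)\ts_{\A_\la}M$ is, as a $\bfg$-module, $\la$-bounded and integrable, hence lies in $\cA_{\le\la}$ regardless of which $\Om$ we chose, so the factorization is a tautology and the recollements for varying $\Om$ are mutually compatible in the sense of Section~\ref{left strat}. The main obstacle I anticipate is the bookkeeping around the equality $\IIx\bfg\Om=\cA_\Om$ inside $\IB(\bfg)$ and making sure $\Ker\R_\la=\cC$ uses the maximality of $\la$ in $\Om$ correctly — once those are pinned down, everything else is a direct appeal to Lemma~\ref{lm:recol} and the Weyl-functor adjunction.
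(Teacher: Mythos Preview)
Your proposal is correct and follows essentially the same approach as the paper: verify that $\R_\la$ has the fully faithful left adjoint $\W_\la$, identify $\Ker\R_\la$ with $\cA_{\Om\ms\la}$ using maximality of $\la$ in $\Om$, and invoke Lemma~\ref{lm:recol}. The paper's proof is much terser---it omits the routine checks (Serre property, $\bigcup_\Om\cA_\Om=\cA$, compatibility) that you spell out---but the substance is identical.
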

\begin{proof}
We have seen that $\R_\la$ has the left adjoint $j_!=\W_\la$ which is fully faithful.
The functor $i_*$ has the left adjoint $i^*=\ta_{\Om\ms\la}^{\bfg}$ \eqref{trunc2}.
The category $\cA_{\Om\ms\la}$ can be identified with the subcategory of objects $V\in\cA_\Om$ such that $\R_\la(V)=V_\la=0$.
\end{proof}

\subsection{Graded case}
\label{sec:graded}
Let $\bfg=\bop_{i\ge0}\bfg\gr i$ be a graded Lie algebra with the semisimple degree zero component $\fg=\bfg\gr 0$.
We say that a $\bfg$-module $V$ is $\bZ$-graded if it is equipped with a grading $V=\bop_{i\in\bZ}V\gr i$  compatible with the grading of $\bfg$, meaning that $\bfg\gr iV\gr j\sbs V\gr{i+j}$ for all $i,j$.
We define the shifted module $V[n]$ with $V[n]\gr i=V\gr{i-n}$.
Let $\Ig(\bfg)$ denote the category of $\bZ$-graded $\bfg$-modules that are $\fg$-integrable (with morphisms of degree zero)
and let $\cA=\IBg(\bfg)\sbs\Ig(\bfg)$ be the subcategory of modules that are bounded as $\fg$-modules.
For a finite (lower) subset $\Om\sbs P_+$, let $\cA_\Om=\Ig_\Om(\bfg)\sbs\cA$ consist of $V\in\IBg(\bfg)$ such that $V=\ta_\Om^\bfg(V)$.


The simple objects of $\Ig(\bfg)$ are objects $L(\la,n)=L(\la)[n]$ indexed by pairs $(\la,n)\in \Te=P_+\xx\bZ$;
such an object is simply $L(\la)$ placed in degree~$n$, on which $\bfg$ acts via the pullback along the projection $\pi\colon\bfg\to\fg\gr0$.
The functor $W_\Om$ defined in \eqref{W_Om} can be lifted to $W_\Om:\Ig(\fg)\to\Ig_\Om(\bfg)$.
In particular, the global Weyl module
$W(\la)=\Bind_{\le\la}(L(\la))\in\Ig_{\le\la}(\bfg)$
is a graded $\bfg$-module (with the degree zero component equal to  $L(\la)$).
We define $W(\la,n)=W(\la)[n]$ for $n\in\bZ$.

\begin{example}
At this point, we may already see that the category $\IBg(\bfg)$ is not generally a standardly stratified category.
If that were the case, it would imply that projective modules $P(\la)$ have a filtration by standard modules, which are exactly the global Weyl modules.
Let $\bfg=L_0(\HH_2)$. We have $P(\la)\cong \Ind_{\sl_2}^{\bfg}L(\la)$, and using this formula, we can immediately compute the first few graded components of $P(0)$:
 \[
P(0)=\begin{tabular}{c}
$L(0)$\\
$L(3)$\\
$L(6)\oplus L(4)\oplus L(2)$\\
\ldots
\end{tabular}
 \]
If the module $P(0)$ had a filtration by standard modules, the above decomposition would have a copy of~$W(3)$ with degrees shifted by one, and examining the decomposition of~$W(3)$ above, we see that this would imply presence of $L(0)$ in the degree $2$ component of~$P(0)$.
The resulting contradiction shows that the category $\IBg(L_0(\HH_2))$ is not standardly stratified.
\end{example}

The algebra $\A_\la\iso \lb\End_\bfg(W(\la))^\op$ is $\bZ$-graded (here we consider endomorphisms of all degrees).
Its degree zero component is $U(\fh)/(h-\la(h)\col h\in\fh)\iso\bC$.
Let $\cA_\la=\Modg(\A_\la)$ be the category of $\bZ$-graded left $\A_\la$-modules.
Its simple objects are of the form $\bC_\la[n]$, for $n\in\bZ$,
where $\bC_\la$ is the module corresponding to the projection $\A_\la\to(\A_\la)_0=\bC$.
Similarly to Theorem \ref{th:strat1} we obtain the following result.

\begin{theorem}\label{th:strat2}
The category $\cA=\IBg(\bfg)$ is left stratified by the poset $\La=P_+$ with the Serre subcategories $\cA_\Om=\Ig_\Om(\bfg)$ for finite lower subsets $\Om\sbs\La$, the strata categories $\cA_\la=\Modg(\A_\la)$ for $\la\in\La$,
and the left recollement for $\la\in\max\Om$ (where $i_*$ is the embedding)
\[\cA_{\Om\ms\la}\xto{i_*}\cA_\Om\xto{j^*=\R_\la}\cA_\la=\Modg(\A_\la).\]
\end{theorem}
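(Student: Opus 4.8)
The plan is to mimic the proof of Theorem \ref{th:strat1} in the $\bZ$-graded setting, verifying that every piece of the left-stratification data carries a compatible grading. First I would record that the categories involved are abelian: $\cA=\IBg(\bfg)$ is abelian because kernels and cokernels of degree-zero morphisms of $\bZ$-graded $\bfg$-modules are again $\bZ$-graded, and integrability and $\fg$-boundedness are preserved under subquotients; similarly $\cA_\Om=\Ig_\Om(\bfg)$ is a Serre subcategory of $\cA$ (the condition $V=\ta_\Om^\bfg(V)$ is stable under subobjects, quotients and extensions, as $\ta_\Om$ only records which $\fg$-isotypic components appear), and $\cA_\la=\Modg(\A_\la)$ is abelian as a category of graded modules over a $\bZ$-graded ring. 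The conditions $\cA_\es=0$ and $\bigcup_\Om\cA_\Om=\cA$ are immediate, and order-preservation $\Om\sbs\Om'\Rightarrow\cA_\Om\sbs\cA_{\Om'}$ is clear from the definition.

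Next I would construct the left recollement $\cA_{\Om\ms\la}\xto{i_*}\cA_\Om\xto{j^*=\R_\la}\cA_\la$ for $\la\in\max\Om$, exactly as in Theorem \ref{th:strat1} but keeping track of degrees. The functor $j^*=\R_\la$ sends $V\mapsto V_\la$, which is a $\bZ$-graded $\A_\la$-module since $\A_\la$ is $\bZ$-graded and the $\bfg$-action respects both gradings; it is exact because taking a fixed $\fg$-weight space is exact. Its left adjoint is the Weyl functor $\W_\la(M)=W(\la)\ts_{\A_\la}M$, which makes sense on graded modules because $W(\la)$ is a graded $U(\bfg)$-$\A_\la$-bimodule (its degree-zero component being $L(\la)$, as noted before the statement); the adjunction $\Hom_{\bfg}(\W_\la M,V)\iso\Hom_{\A_\la}(M,\R_\la V)$ is the graded refinement of the adjunction in the lemma preceding Theorem \ref{th:strat1}, and $\R_\la\W_\la\iso\id$ follows from $W(\la)_\la\iso\A_\la$ as graded $\A_\la$-bimodules, so $\W_\la$ is fully faithful. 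Finally, $i_*:\cA_{\Om\ms\la}\to\cA_\Om$ is the embedding, with left adjoint $i^*=\ta_{\Om\ms\la}^{\bfg}$ (which is degree-preserving since $\ta$ only discards $\fg$-isotypic summands), and $\cA_{\Om\ms\la}$ is precisely the full subcategory of $V\in\cA_\Om$ with $\R_\la(V)=V_\la=0$, because $\la$ is maximal in $\Om$ so $V_\la=0$ exactly removes the top $L(\la)$-isotypic layer.

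Then I would check compatibility of the recollements: for $\la\in\max\Om$ one needs $\cA_\la\xto{j_!}\cA_\Om$ to factor as $\cA_\la\xto{j_!^\la}\cA_{\le\la}\xto{i_*}\cA_\Om$, which holds because $\W_\la$ lands in $\IIx\bfg{\le\la}$ independently of the ambient $\Om$ (the image of $W(\la)\ts_{\A_\la}M$ is $\la$-bounded and integrable), and the graded version of this is automatic since all functors involved are degree-preserving. The main obstacle — though in the end a mild one — is bookkeeping: one must make sure that \emph{every} adjunction, every fully-faithfulness claim, and the identification $W(\la)_\la\iso\A_\la$ used in the ungraded Theorem \ref{th:strat1} survives verbatim when morphisms are restricted to degree zero, i.e. that no hidden use was made of non-graded homomorphisms. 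This is guaranteed by the observation (stated just above the theorem) that $\A_\la$ is $\bZ$-graded with one-dimensional degree-zero part and that $W(\la)$ is a graded bimodule; once that is in place, the proof is word-for-word the graded analogue of the proof of Theorem \ref{th:strat1}, and we conclude that the displayed data is a left stratification of $\cA=\IBg(\bfg)$ over $\La=P_+$.
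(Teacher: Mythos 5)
Your proposal is correct and is essentially the proof the paper intends: the paper simply states that Theorem \ref{th:strat2} follows ``similarly to Theorem \ref{th:strat1},'' and what you have written is precisely the graded bookkeeping that this phrase glosses over (noting that $\A_\la$ is $\bZ$-graded with all-degree endomorphisms, that $W(\la)$ is a graded bimodule, and that the adjunction and the identifications $\R_\la\W_\la\iso\id$ and $\Ker\R_\la=\cA_{\Om\ms\la}$ all restrict to degree-zero morphisms).
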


The functor $\R_\la:\cA_{\le\la}\to\cA_\la$ induces a 1-1 correspondence between isomorphism classes of simple objects
$L(\la,n)\in\cA_{\le\la}\ms\cA_{<\la}$
and isomorphism classes of simple objects $\bC_\la[n]\in\cA_\la$.
We obtain a commutative diagram (\cf \eqref{Irr diagram})
\begin{equation}
\begin{tikzcd}
\Te=\La\xx\bZ\ar[rr,"\vi"]\ar[dr,"\rho"']&&\bigsqcup_{\la\in\La}\Irr(\cA_\la)\ar[dl]\\
&\La
\end{tikzcd}
\end{equation}
where $\vi$ is the bijection that maps $(\la,n)\in\Te$ to the irreducible module
$\R_\la L(\la,n)=\bC_\la[n]\in\cA_\la$
and $\rho(\la,n)=\la$.
The projective cover of $\bC_\la[n]$ is $\A_\la[n]$, hence the corresponding standard object is
\begin{equation}
\De(\la,n)=\W_\la(\A_\la[n])=W(\la)[n],
\end{equation}
the shifted global Weyl module.
The corresponding proper standard object is
\begin{equation}\label{prop stand graded}
\ubar\De(\la,n)=\W_\la(\bC_\la[n])=\W_\la(\bC_\la)[n].
\end{equation}
Later we will identify these objects with local Weyl modules.

\subsection{Full stratification}
Let $\bfg$ be a graded Lie algebra as before, with finite-dimensional graded components.
Let $\modg(\bfg)$ be the category of $\bZ$-graded $\bfg$-modules that are finite-dimensional at every degree
and let $\cA=\modBg(\bfg)=\modg(\bfg)\cap\IBg(\bfg)$ and $\cA_\Om=\modg_\Om(\bfg)=\modg(\bfg)\cap\Ig_\Om(\bfg)$
for finite lower subsets $\Om\sbs \La=P_+$.
Similarly, let $\cA_\la=\modg(\A_\la)$ be the category of $\bZ$-graded $\A_\la$-modules that are finite-dimensional at every degree.
We have $W(\la)\in\modg_{\le\la}(\bfg)$ and the induced functor $\W_\la:\modg(\A_\la)\to\modg_{\le\la}(\bfg)$ which is left adjoint to the functor $\R_\la:\modg_{\le\la}(\bfg)\to \modg(\A_\la)$.
We would like to construct the right adjoint of the functor~$\R_\la$ which will be a part of the (full) stratification structure on the category $\cA=\modBg(\bfg)$. Specifically, we shall now show that this can be done if there exists a (degree-preserving) Lie algebra automorphism $\si:\bfg\to\bfg$  such that $\si|_{\fh}=-\id$. The restriction of such automorphism $\si$ to $\fg=\bfg\gr0$ sends every root space $\fg_\alpha$ to $\fg_{-\alpha}$, so, up to a choice of a Chevalley basis, it is given by the Chevalley involution of $\fg$, so that $\si(e_i)=-f_i$, $\si(f_i)=-e_i$ and $\si(h_i)=-h_i$.

\begin{example}
As in Example \ref{H2}, let $\bfg=L_0(\HH_2)$
be the Lie algebra of Hamiltonian vector fields of degree $\ge0$.
We consider the automorphism of $\bfg$ given by
$\si(f\dd_x+g\dd_y)=f(y,-x)\dd_y-g(y,-x)\dd_x$
(corresponding to the symplectomorphism of $\bC^2$ given by $(x,y)\mto(y,-x)$).
Recall that $\sl_2\iso\bfg\gr0$ with $e=x\dd_y$, $f=y\dd_x$, $h=x\dd_x-y\dd_y$. Note that though the restriction to $\sl_2$ is the Chevalley involution, on the whole Lie algebra $\bfg=L_0(\HH_2)$ we only have $\si^4=\id$.
\end{example}


In what follows, for any algebra homomorphism $\vi:A\to B$, we denote the induced functor $\Mod B\to\Mod A$ again by $\vi$.
Consider the duality functor
\[D:\modg(\bfg)^\op\to\modg(\bfg),\qquad
(DV)\gr i=V\gr {-i}^*,\]
and similarly $D:\modg(\A_\la)^\op\to \modg(\A_\la^\op)$.
Note that $(DV)_\la=V_{-\la}^*$ for $V\in \modg(\bfg)$ and $\la\in P$.
Taking the pullback along $\si$, we obtain the module
$\bar D V=D\si V$ such that $(\bar D V)_\la=V_\la^*$.
This implies that $\bar D$ restricts to an equivalence
\[\bar D:\modg_\Om(\bfg)^\op\to\modg_\Om(\bfg)\]


\begin{lemma}
For a finite lower subset $\Om\sbs \La$, $\la\in\max\Om$ and
$V\in\modg_{\Om}(\bfg)$,
we have an isomorphism of $U(\bfh)$-modules
\[D\R_\la\bar D(V)\iso \si S\R_\la(V)\]
where $S:U(\bfh)^\op\to U(\bfh)$ is the antipode (defined by $S(a)=-a$ for $a\in\bfh$).
In particular, the ideal $\sJ_\la=\Ker(U(\bfh)\to\A_\la)$ is stable under the action of $\si S$.
\end{lemma}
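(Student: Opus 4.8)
The plan is to compute the $U(\bfh)$-module $\R_\la\bar D(V)$ directly from the definition $\bar D=D\si$, and then apply $D$ once more. The only structural facts I will use are that $\si$ is degree-preserving and that $\si|_\fh=-\id$. The latter forces $\si(\bfg_\mu)=\bfg_{-\mu}$ for every weight $\mu$ (for $y\in\bfg_\mu$, $h\in\fh$ one has $[h,\si(y)]=\si([-h,y])=-\mu(h)\si(y)$), and in particular $\si(\bfh)=\bfh$, so $\si$ restricts to an automorphism of $U(\bfh)$. It also gives $(\si V)_\mu=V_{-\mu}$ for any weighted $\fg$-module, hence $(\bar D V)_\la=(D\si V)_\la=\bigl((\si V)_{-\la}\bigr)^*=(V_\la)^*$; that is, $\R_\la\bar D(V)\cong(\R_\la V)^*$ as graded vector spaces, all duals being graded duals (legitimate since objects of $\modg_\Om(\bfg)$ are finite-dimensional in each degree).

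Next I would track the $U(\bfh)$-action. Unwinding the $\bfg$-action on $D\si V$, for $a\in\bfh$ and $\phi\in(\bar D V)_\la$ one gets $(a\cdot\phi)(v)=-\phi(\si(a)\cdot v)$ for $v\in\R_\la V=V_\la$, which is meaningful because $\si(a)\in\bfh$ preserves $V_\la$. Applying $D$ again and using the canonical graded identification $\bigl((\R_\la V)^*\bigr)^*\cong\R_\la V$, the induced right $U(\bfh)$-module structure on $\R_\la V$ is $v\cdot a=-\si(a)\cdot v$ for $a\in\bfh$. Since $-a=S(a)$, and since transposition of linear maps and the antipode $S$ both reverse the order of products, on a monomial $a_1\cdots a_n\in U(\bfh)$ this reads $v\cdot(a_1\cdots a_n)=(-1)^n\si(a_n\cdots a_1)\cdot v=\si S(a_1\cdots a_n)\cdot v$, which is exactly the pullback of the left $U(\bfh)$-module $\R_\la(V)$ along the algebra homomorphism $\si S\colon U(\bfh)^\op\to U(\bfh)$, i.e. the object $\si S\,\R_\la(V)$. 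As $\bar D$ and the outer $D$ each reverse the $\bZ$-grading while $\R_\la$ and $\si S$ leave it untouched, the identification respects the $\bZ$-grading, giving $D\R_\la\bar D(V)\cong\si S\,\R_\la(V)$.

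For the last assertion I would specialize to $V=W(\la)$, using $\R_\la W(\la)=W(\la)_\la\cong\A_\la$. Because $\bar D$ preserves $\modg_{\le\la}(\bfg)$, we have $\bar D W(\la)\in\modg_{\le\la}(\bfg)$, so $\R_\la\bar D W(\la)$ lies in $\modg(\A_\la)$ and is annihilated by the two-sided ideal $\sJ_\la$; passing through $D$ it remains annihilated by $\sJ_\la$. The isomorphism identifies it with $\si S\,\A_\la$, the pullback of $\A_\la$ along $\si S$, which is annihilated by $\sJ_\la$ exactly when $\si S(\sJ_\la)$ kills $\A_\la$, i.e. when $\si S(\sJ_\la)\sbs\Ann_{U(\bfh)}(\A_\la)=\sJ_\la$. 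Hence $\si S(\sJ_\la)\sbs\sJ_\la$, and since $\si S$ restricts to a linear bijection of each finite-dimensional term of the standard filtration of $U(\bfh)$, comparing dimensions upgrades this to $\si S(\sJ_\la)=\sJ_\la$.

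The main difficulty is bookkeeping rather than mathematics: one must keep the left/right module conventions, the contravariances, and the $\bZ$-graded twists coherent while composing the covariant $\si$, the contravariant grading-reversing $D$, the covariant $\R_\la$, and the anti-automorphism $\si S$ — in particular checking that the generator-level rule $a\mapsto-\si(a)$ propagates correctly to all of $U(\bfh)$ through the order-reversing antipode. The only genuinely extra ingredient in the final clause is the elementary remark that $\si S$ preserves the PBW-type filtration of $U(\bfh)$, which is what turns the a priori inclusion of ideals into an equality.
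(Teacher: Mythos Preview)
Your proof is correct and follows the same approach as the paper: compute the $U(\bfh)$-action on $\R_\la\bar D(V)=V_\la^*$ directly, dualize, and read off the rule $v\cdot a=-\si(a)v=\si S(a)v$; the paper then deduces stability of $\sJ_\la$ from the fact that $D\R_\la\bar D(V)$ is an $\A_\la$-module, which is exactly what your specialization to $V=W(\la)$ makes explicit. One small imprecision: the terms of the standard filtration of $U(\bfh)$ need not be finite-dimensional (e.g.\ $\bfh$ is infinite-dimensional for $L_0(\HH_2)$), so your dimension argument for upgrading the inclusion $\si S(\sJ_\la)\sbs\sJ_\la$ to an equality should instead use the joint PBW-and-$\bZ$-bigrading, or simply apply the inclusion a second time with $\si$ replaced by $\si^{-1}$ --- but this upgrade is not part of the stated lemma anyway.
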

\begin{proof}
The left action of $U(\bfh)$ on $\R_\la \bar D V=V_\la^*$ is given by $(af)(v)=-f(\si(a)v)$ for $a\in \bfh$, $f\in V_\la^*$ and $v\in V_\la$.
Therefore the right action of $U(\bfh)$ on $D\R_\la \bar DV=V_\la$
is given by $va=-\si(a)v$ for $a\in\bfh$.
This coincides with the right action of $U(\bfh)$ on $\si S\R_\la(V)$.
The last assertion follows from the fact that $D\R_\la\bar D(V)$ is a module over $\A_\la$.
\end{proof}


The above result implies that we have an equivalence
\[\hat D=D\si S:\modg(\A_\la)^\op\to \modg(\A_\la)\]
such that $\R_\la\bar D\iso \hat D\R_\la$.

\begin{lemma}
For a finite lower subset $\Om\sbs \La$ and $\la\in\max\Om$,
the functor
 \[
\R_\la:\modg_{\Om}(\bfg)\to\modg(\A_\la)
 \]
has the
fully faithful right adjoint $\bar D\inv \W_\la\hat D$.
\end{lemma}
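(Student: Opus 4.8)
The plan is to obtain the right adjunction formally, by combining the left adjunction $\W_\la\dashv\R_\la$ with the two dualities $\bar D$, $\hat D$ and the intertwining isomorphism $\R_\la\bar D\iso\hat D\R_\la$ established just above. One preliminary point: $\W_\la$ is left adjoint to $\R_\la$ not merely as a functor to $\modg_{\le\la}(\bfg)$ but also as a functor to $\modg_\Om(\bfg)$. Indeed, $\W_\la M=W(\la)\ts_{\A_\la}M$ is $\le\la$-bounded, so every $\bfg$-morphism $\W_\la M\to V$ factors through the truncation $\ta^\bfg_{\le\la}V$, and $(\ta^\bfg_{\le\la}V)_\la=V_\la=\R_\la V$ (because $\la$ is maximal in $\Om$); hence $\Hom_\bfg(\W_\la M,V)\iso\Hom_{\A_\la}(M,\R_\la V)$, exactly as in the earlier argument for the category $\IIx\bfg\Om$.

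Granting this, the right adjoint is produced by the chain of natural isomorphisms, for $V\in\modg_\Om(\bfg)$ and $M\in\modg(\A_\la)$:
\begin{align*}
\Hom_{\A_\la}(\R_\la V,M)
&\iso\Hom_{\A_\la}(\hat DM,\hat D\R_\la V)
\iso\Hom_{\A_\la}(\hat DM,\R_\la\bar DV)\\
&\iso\Hom_\bfg(\W_\la\hat DM,\bar DV)
\iso\Hom_\bfg(V,\bar D\inv\W_\la\hat DM).
\end{align*}
Here the first and last isomorphisms express that $\hat D$ and $\bar D$ are contravariant equivalences, with $\bar D\inv$ a chosen quasi-inverse of $\bar D$ (a distinction that matters because $\bar D$ need not be an involution); the second is the intertwining relation; and the third is the adjunction discussed above. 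Since every step is natural in both $V$ and $M$, the functor $\bar D\inv\W_\la\hat D$ is right adjoint to $\R_\la$, and it takes values in $\modg_\Om(\bfg)$ because $\bar D$, hence $\bar D\inv$, preserves that subcategory.

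It remains to check that $\bar D\inv\W_\la\hat D$ is fully faithful. I would argue this directly: it is a composite of the contravariant equivalence $\hat D$, the fully faithful functor $\W_\la$ (the adjunction unit $\id\to\R_\la\W_\la$ is an isomorphism, and full faithfulness survives composition with the fully faithful inclusion $\modg_{\le\la}(\bfg)\emb\modg_\Om(\bfg)$), and the contravariant equivalence $\bar D\inv$, so each of the three induces a bijection on morphism spaces, and therefore so does the composite. Equivalently, applying $\hat D$ to $\R_\la\W_\la\iso\id$ and using $\R_\la\bar D\inv\iso\hat D\inv\R_\la$ (a further consequence of the intertwining relation) shows that the counit $\R_\la\bar D\inv\W_\la\hat D\to\id$ is an isomorphism.

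The argument is entirely formal, so there is no serious obstacle; the one genuinely load-bearing input is the intertwining isomorphism $\R_\la\bar D\iso\hat D\R_\la$. The only things that need attention are the variance bookkeeping — treating $\bar D$ and $\hat D$ consistently as contravariant equivalences and using a genuine quasi-inverse $\bar D\inv$ rather than $\bar D$ itself — and the verification that the left adjunction $\W_\la\dashv\R_\la$ is available with ambient category $\modg_\Om(\bfg)$; both of these are routine.
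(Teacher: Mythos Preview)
Your proof is correct and follows exactly the same chain of natural isomorphisms as the paper's proof; the paper writes only that four-step chain and leaves implicit both the fact that $\W_\la\dashv\R_\la$ holds with ambient category $\modg_\Om(\bfg)$ and the full faithfulness of the composite, which you take the care to spell out.
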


\begin{proof}
For $M\in\modg (\A_\la)$, $V\in\modg_{\Om}(\bfg)$
we have
\begin{multline*}
\Hom(\R_\la V, M)
\iso\Hom(\hat DM, \hat D\R_\la V)
\iso\Hom(\hat DM,\R_\la \bar D V)
\\ \iso\Hom(\W_\la \hat D M, \bar D V)
\iso \Hom(V,\bar D\inv \W_\la \hat D M).
\end{multline*}
\end{proof}


This result together with Theorem \ref{th:strat2} implies

\begin{theorem}
For a Lie algebra $\bfg$ that has a degree-preserving automorphism $\si$ such that $\si|_{\fh}=-\id$, the category $\cA=\modBg(\bfg)$ is stratified by the poset $\La=P_+$ with the Serre subcategories $\cA_\Om=\modg_\Om(\bfg)$ for finite lower subsets $\Om\sbs\La$, the strata categories $\cA_\la=\modg(\A_\la)$ for $\la\in\La$,
and the recollement for $\la\in\max\Om$ (where $i_*$ is the embedding)
\[\cA_{\Om\ms\la}\xto{i_*}\cA_\Om\xto{j^*=\R_\la}\cA_\la=\modg(\A_\la).\]
\end{theorem}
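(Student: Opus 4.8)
The plan is to obtain the full stratification by adjoining one more functor to the left stratification already constructed in Theorem~\ref{th:strat2}. With the paper's conventions, a left stratification is a stratification exactly when each of its left recollements $\cA_{\Om\ms\la}\xto{i_*}\cA_\Om\xto{j^*}\cA_\la$ is a genuine recollement, \ie when the exact functor $j^*$ carries, besides its fully faithful left adjoint $j_!$, a fully faithful \emph{right} adjoint $j_*$; the left and right adjoints $i^*,i^!$ of $i_*$ and all the associated adjunction isomorphisms $i^*i_*\iso\id\iso i^!i_*$ and $j^*j_!\iso\id\iso j^*j_*$ then follow formally from Lemma~\ref{lm:recol} and its opposite. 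So there are two things to do: observe that the left-stratification data of Theorem~\ref{th:strat2} descends from $\IBg(\bfg)$ to the subcategory $\modBg(\bfg)$ of degree-wise finite-dimensional modules, and produce the right adjoint $j_*$ out of the automorphism $\si$.

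For the first point, one checks that every structure functor restricts. The inclusions $\cA_\Om=\modg_\Om(\bfg)\sbs\Ig_\Om(\bfg)$ and $\cA_\la=\modg(\A_\la)\sbs\Modg(\A_\la)$ are of Serre subcategories; the functor $j^*=\R_\la$ is the $\la$-weight-space functor and $i^*=\ta_{\Om\ms\la}^{\bfg}$ a quotient functor, both of which plainly preserve degree-wise finite dimensionality; and the left adjoint $j_!=\W_\la$ already restricts to a functor $\modg(\A_\la)\to\modg_{\le\la}(\bfg)$, left adjoint to $\R_\la$, as recorded earlier in this section (ultimately because $W(\la)\in\modg_{\le\la}(\bfg)$, $\Pp{\le\la}$ being finite and $\Bind_{\le\la}(L(\la))$ a quotient of $\Ind_\fg^\bfg L(\la)$, which by PBW is, as a graded vector space, $U(\bfg_{>0})\ts L(\la)$ with $\bfg_{>0}=\bop_{i>0}\bfg\gr i$, hence degree-wise finite-dimensional). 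Consequently the left recollements $\cA_{\Om\ms\la}\xto{i_*}\cA_\Om\xto{j^*}\cA_\la$, the identification of $\cA_{\Om\ms\la}$ with $\Ker\R_\la\subset\cA_\Om$, and the compatibility isomorphisms $i_*j_!^\la\iso j_!$ all carry over verbatim from Theorem~\ref{th:strat2}, so $\cA=\modBg(\bfg)$ carries the asserted left stratification.

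For the second point, I would simply invoke the lemma immediately preceding the statement: since $\si$ is degree-preserving with $\si|_{\fh}=-\id$, it gives rise to the Chevalley-type duality $\bar D=D\si$ on $\modg_\Om(\bfg)$ and to the matching duality $\hat D=D\si S$ on $\modg(\A_\la)$, intertwined by $\R_\la\bar D\iso\hat D\R_\la$, whence $j^*=\R_\la\colon\cA_\Om\to\cA_\la$ acquires the fully faithful right adjoint $j_*=\bar D\inv\W_\la\hat D$. Together with the fully faithful left adjoint $j_!=\W_\la$ from the first point, this turns each sequence $\cA_{\Om\ms\la}\xto{i_*}\cA_\Om\xto{j^*}\cA_\la$ into a recollement, so by the first paragraph the left stratification is a stratification, which is exactly the assertion.

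The only genuinely non-formal ingredient is the construction of the dualities $\bar D$, $\hat D$ and the identity $\R_\la\bar D\iso\hat D\R_\la$, and this has already been discharged by the two lemmas above; so the main remaining obstacle is the routine but somewhat lengthy bookkeeping of verifying that all the axioms of a (left) stratification and of a recollement hold at once once the finite subcategories and the extra adjoint $j_*$ are in place — in particular that no finiteness is lost in passing from $\IBg(\bfg)$ to $\modBg(\bfg)$, the crux of which is the degree-wise finite-dimensionality of the global Weyl modules.
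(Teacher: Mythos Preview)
Your proposal is correct and follows essentially the same approach as the paper: combine the left stratification of Theorem~\ref{th:strat2} (restricted to the degree-wise finite-dimensional subcategory $\modBg(\bfg)$) with the right adjoint $j_*=\bar D^{-1}\W_\la\hat D$ supplied by the lemma immediately preceding the theorem. The paper's own proof is a single sentence pointing to exactly these two inputs; you are simply more explicit than the paper about the passage from $\IBg(\bfg)$ to $\modBg(\bfg)$, which the paper handles tersely at the start of the ``Full stratification'' subsection by noting $W(\la)\in\modg_{\le\la}(\bfg)$ under the standing assumption of finite-dimensional graded components.
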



\section{Local Weyl modules}\label{sec:local}

In the case of affine algebras, map algebras, and other root-graded algebras, there is an important notion of a local Weyl module; for instance, for algebras $\bfg$ of the form $\fg\otimes A$ with $A$ commutative, local Weyl modules help to capture aspects of representation theory of $\bfg$ at different points of $\Spec(A)$. In this section, we discuss a general version of this definition
and relate local Weyl modules to proper standard modules introduced earlier.

\subsection{Definition and basic properties}

\begin{definition}\label{loc1}
For $z\in(\bfh)^*$, the \idef{local Weyl module} $W(z)$ is the maximal $\fg$-integrable $\bfg$-module generated by the vector $v_z$ such that
\[\bfn_+v_z=0,\qquad hv_z=z(h)v_z\quad\forall h\in\bfh. \]
\end{definition}

The standard example of a local Weyl module
is as follows (see \eg \cite[Def.~4]{MR2102326}).

\begin{example}\label{ex1}
Let $X=\Spec A$ be an affine scheme over $\bC$ and $\la=\sum_{i=1}^r\la_i\om_i$ be a dominant weight.
Let $\bfg=\fg\ts A$ so that $\bfh=\fh\ts A$.
Let $(z_{ij})_{1\le j\le\la_i}$ be a collection of points in $X$.
We can interpret them as algebra homomorphisms $z_{ij}\colon A\to\bC$.
Consider the linear maps $z_i=\sum_{j=1}^{\la_i}z_{ij}\colon A\to\bC$ and the linear map
 \[
z\colon\bfh=\fh\ts A\to\bC,\qquad h_i\ts a\mto z_i(a).
\]
We have $z(h_i)=z(h_i\ts 1)=\sum_{j=1}^{\la_i} z_{ij}(1)=\la_i$, hence $z|_{\fh}=\la$.
The local Weyl module $W(z)$ is the
maximal $\fg$-integrable $\bfg$-module generated by the vector $v_z$ such that
 \[
(\fn_+\ts A)v_z=0,\qquad (h_i\ts a)v_z=\sum_{j=1}^{\la_i}z_{ij}(a)\cdot v_z\quad \forall 1\le i\le r,\,a\in A.
 \]
\end{example}

The local Weyl modules can be computed using the functor $\W_\la$ of Section \ref{sec:Weylfunctor}.


\begin{lemma}
\label{lm:loc}
Let $z\in(\bfh)^*$ and $\la=z|_{\fh}\in\fh^*$. If $W(z)\ne0$, then $\la$ is a dominant weight. Moreover, we have
 \[
W(z)\cong \W_\la(\bC_z),
 \]
where $\bC_z$ is the one-dimensional $\bfh$-module corresponding to $z$.
\end{lemma}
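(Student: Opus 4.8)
The plan is to verify the universal property of $\W_\la(\bC_z)$ directly and match it against the defining universal property of $W(z)$ from Definition \ref{loc1}. First I would deal with dominance: if $W(z)\ne0$, then in particular $W(z)$ is a nonzero integrable $\fg$-module with $\bfn_+v_z=0$ and $hv_z=z(h)v_z$, so $\fn_+v_z=0$ and $h_iv_z=\la(h_i)v_z$; since $v_z$ generates a nonzero integrable $\fg$-module with $\fn_+$ acting by zero on it, $\la$ must be dominant by the standard argument (the same one used after Definition \ref{class GW}, via \cite[Cor.~3.6]{MR1104219}). So we may assume $\la\in P_+$ and $\A_\la$, $W(\la)$, $\W_\la$ all make sense.

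Next I would identify $\W_\la(\bC_z)=W(\la)\ts_{\A_\la}\bC_z$ with a quotient of $W(\la)$. Since $\A_\la\iso U(\bfh)/\sJ_\la$ and $\bC_z$ is the one-dimensional $\bfh$-module on which $\bfh$ acts through $z$ (this descends to $\A_\la$ precisely when $\sJ_\la\sbs\Ker z$, which holds because $z$ factors through the action on $W(z)_z$ — I should check $\sJ_\la$ annihilates $v_z$, i.e. the annihilator ideal defining $\A_\la$ kills any vector of the relevant type), the tensor product $W(\la)\ts_{\A_\la}\bC_z$ is the quotient $W(\la)/\,W(\la)\cdot\sK_z$ where $\sK_z=\Ker(\A_\la\xto{z}\bC)$, i.e. $W(\la)$ modulo the $\bfg$-submodule generated by $(a-z(a))v_\la$ for $a\in\bfh$. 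This quotient is generated by the image of $v_\la$, which is integrable over $\fg$ (as a quotient of $W(\la)$), satisfies $\bfn_+$ acting by zero on it and $hv=z(h)v$ for $h\in\bfh$; hence it receives a surjection from $W(z)$ by maximality of the latter. Conversely, $W(z)$ is $\fg$-integrable, $\la$-bounded (its weights lie in $z|_\fh-\Qp=\la-\Qp$ by the same $\fn_+$-killing argument), so $W(z)\in\IIx\bfg{\le\la}$, and by Lemma \ref{lm:weyl prop} the cyclic vector $v_z\in W(z)_\la$ corresponds to a $\bfg$-morphism $W(\la)\to W(z)$ sending $v_\la\mto v_z$; this is surjective since $v_z$ generates $W(z)$, and it kills $(a-z(a))v_\la$ because $(a-z(a))v_z=0$ by the defining relation. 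So it factors through $\W_\la(\bC_z)\onto W(z)$.

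Having the two mutually inverse surjections $W(z)\onto\W_\la(\bC_z)$ and $\W_\la(\bC_z)\onto W(z)$ (both sending cyclic vector to cyclic vector), I would conclude they are isomorphisms. Alternatively, and perhaps more cleanly, I would package this via adjunction: by the last lemma of Section \ref{sec:Weylfunctor}, $\W_\la\dashv\R_\la$ with $\R_\la(V)=V_\la$, so for $V\in\IIx\bfg{\le\la}$ one has $\Hom_\bfg(\W_\la(\bC_z),V)\iso\Hom_{\A_\la}(\bC_z,V_\la)$, which is exactly the space of vectors $v\in V_\la$ killed by $\bfn_+$ and on which $\bfh$ acts by $z$; this is the same universal property that characterizes $W(z)$ as the maximal such module, once one observes $W(z)$ itself lies in $\IIx\bfg{\le\la}$. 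The main obstacle I anticipate is the bookkeeping around $\la$-boundedness: I need $W(z)$ (a priori only defined as a maximal integrable module, not obviously bounded) to actually lie in $\IIx\bfg{\le\la}$ so that Lemma \ref{lm:weyl prop} applies — this follows because $\bfn_+v_z=0$ forces $P(W(z))\sbs\la-\Qp$ exactly as in the proof of Proposition \ref{prop:usualWeyl} (writing $W(z)$ as a quotient of $\Ind_{\bfb}^\bfg\bC v_z$, which is $U(\bfn_-)$-generated by $v_z$ and hence $\la$-bounded), so integrability plus $\la$-boundedness put it in $\IIx\bfg{\le\la}$; everything else is formal.
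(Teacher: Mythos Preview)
Your proposal is correct and follows essentially the same approach as the paper: build a surjection $\W_\la(\bC_z)\to W(z)$ from the universal property of $W(\la)$ (equivalently, the adjunction $\W_\la\dashv\R_\la$), observe that the generator of $\W_\la(\bC_z)$ satisfies the defining relations of Definition~\ref{loc1}, and conclude by maximality of $W(z)$. The only notable difference is in the dominance step: the paper gets it by first producing the surjection $W(\la)\to W(z)$ (so $W(z)\ne0$ forces $W(\la)\ne0$, hence $\la\in P_+$), which also immediately handles the $\la$-boundedness you worry about and shows $\bC_z$ is genuinely an $\A_\la$-module (since $W(z)_\la$ is a quotient of $W(\la)_\la\iso\A_\la$); your direct argument via \cite[Cor.~3.6]{MR1104219} is equally valid but requires the extra bookkeeping you flagged.
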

\begin{proof}
Note that $hv_z=\la(h)v_z$ for $h\in\fh$, so there is a surjective $\bfg$-module morphism $W(\la)\to W(z)$ sending $v_\la$ to $v_z$. Since $W(\la)$ is non-zero only for dominant $\la$, the first assertion follows.
We have $W(z)_\la\iso \bC_z$ as a module over $U(\bfh)$ (and $\A_\la$).
This isomorphism induces a surjective map $\W_\la(\bC_z)\to W(z)$.
The generator of $\W_\la(\bC_z)$ satisfies the conditions of a local Weyl module, hence $\W_\la(\bC_z)\to W(z)$ is an isomorphism by the maximality of $W(z)$.
\end{proof}

The main class of examples of local Weyl modules we
are interested in is as follows.

\begin{definition}
Let $\la$ be a dominant weight and $\pi\colon \bfg\to\fg$ be a left inverse of the embedding of Lie algebras $\fg\emb\bfg$ such that $\pi(\bfh)=\fh$.
The \emph{local augmentation Weyl module} $W_{\pi}(\la)$ is the local Weyl module corresponding to the linear function $\la\pi\colon\bfh\to\bC$.
\end{definition}


\begin{example}
For a point $p\in X=\Spec A$ (identified with the morphism $p\colon A\to\bC$), we may consider $\pi=\id\ts p\colon\bfg=\fg\ts A\to\fg\ts\bC=\fg$.
The corresponding local augmentation Weyl module for $\la=\sum_i\la_i\om_i$ is precisely the module from Example \ref{ex1} with $z_{ij}=p$ for all $1\le j\le\la_i$.
Indeed, we have
 \[
z(h_i\ts a)=z_i(a)=\la_i p(a)=(\la\ts p)(h_i\ts a)
=\la\pi(h_i\ts a)
 \]
for $a\in A$.
\end{example}

For a dominant weight $\la$, the irreducible $\fg$-module $L(\la)$ can be made into a $\bfg$-module using the pullback along $\pi\colon\bfg\to\fg$.
This module satisfies all the conditions of $W_{\pi}(\la)$ except maximality. Therefore, local augmentation Weyl modules are always nonzero.

\medskip
Let us consider now the situation of Section \ref{sec:graded}, where we have extra grading. Using the projection $\pi\colon\bfg\to\bfg\gr0=\fg$, we may define the corresponding local augmentation Weyl modules $W_\pi(\la)$.
By Definition~\ref{loc1} this is the maximal $\fg$-integrable $\bfg$-module generated by a vector $v_z$ such that
$\bfn_+v_z=0$, $hv_z=\la(h)v_z$ for $h\in\fh$ and $\bfh\gr iv_z=0$ for $i>0$,
where we consider the decomposition $\bfh=\bop_{i\ge 0}\bfh_i$ with $\bfh\gr0=\fh$.
By Lemma \ref{lm:loc}, we have
$W_\pi(\la)=W(\la\pi)=\W_\la(\bC_{\la\pi})$
and this is exactly the proper standard object $\ubar\De(\la,0)\in\IBg(\bfg)$
introduced in \eqref{prop stand graded}.

\subsection{Local Weyl modules for \texorpdfstring{$L_0(\HH_2)$}{L0H2}}

The following are $\mathfrak{sl}_2$-decompositions of local augmentation Weyl modules for the Lie algebra $L_0(\HH_2)$ for $\lambda=0,\ldots,7$, listed degree by degree.

 \[
W_\pi(0)=
\begin{tabular}{c}
$L(0)$
\end{tabular},\quad
W_\pi(1)=\begin{tabular}{c}
$L(1)$
\end{tabular},\quad
W_\pi(2)=\begin{tabular}{c}
$L(2)$\\
$L(1)$
\end{tabular},\quad
W_\pi(3)=\begin{tabular}{c}
$L(3)$\\
$L(2)\oplus L(0)$\\
$L(1)$
\end{tabular},
 \]

 \[
W_\pi(4)=\begin{tabular}{c}
$L(4)$\\
$L(3)\oplus L(1)$\\
$2L(2)\oplus L(0)$\\
$L(1)$
\end{tabular},\quad
W_\pi(5)=\begin{tabular}{c}
$L(5)$\\
$L(4)\oplus L(2)$\\
$2L(3)\oplus 2L(1)$\\
$L(4)\oplus 2L(2)\oplus 2L(0)$\\
$L(1)$
\end{tabular},
 \]

 \[
W_\pi(6)=\begin{tabular}{c}
$L(6)$\\
$L(5)\oplus L(3)$\\
$2L(4)\oplus 2L(2)\oplus L(0)$\\
$L(5)\oplus 3L(3)\oplus 3L(1)$\\
$L(4)\oplus 4L(2)\oplus 2L(0)$\\
$L(1)$
\end{tabular},\quad
W_\pi(7)=\begin{tabular}{c}
$L(7)$\\
$L(6)\oplus L(4)$\\
$2L(5)\oplus 2L(3)\oplus L(1)$\\
$L(6)\oplus 3L(4)\oplus 4L(2)\oplus L(0)$\\
$2L(5)\oplus 5L(3)\oplus 5L(1)$\\
$2L(4)\oplus 5L(2)\oplus 4L(0)$\\
$L(1)$
\end{tabular}
 \]

A surprising phenomenon that one observes while examining this data is that the socle of $W_\pi(n)$ seems to be isomorphic to $L(1)$ for all $n\ge 1$. We conjecture that this is indeed the case.







\begin{thebibliography}{10}

\bibitem{MR3951769}
Irfan Bagci, Lucas Calixto, and Tiago Macedo, \emph{Weyl modules and {W}eyl
  functors for {L}ie superalgebras}, Algebr. Represent. Theory \textbf{22}
  (2019), no.~3, 723--756.

\bibitem{BBD}
A.~A. Beilinson, J.~Bernstein, and P.~Deligne, \emph{{F}aisceaux pervers},
  Analysis and topology on singular spaces, I (Luminy, 1981), Ast\'erisque,
  vol. 100, Soc. Math. France, Paris, 1982, pp.~5--171.

\bibitem{MR1161095}
S.~Berman and R.~V. Moody, \emph{Lie algebras graded by finite root systems and
  the intersection matrix algebras of {S}lodowy}, Invent. Math. \textbf{108}
  (1992), no.~2, 323--347.

\bibitem{MR1484478}
Wieb Bosma, John Cannon, and Catherine Playoust, \emph{The {M}agma algebra
  system. {I}. {T}he user language}, J. Symbolic Comput. \textbf{24} (1997),
  no.~3-4, 235--265, Computational algebra and number theory (London, 1993).

\bibitem{MR3981101}
Lucas Calixto, Joel Lemay, and Alistair Savage, \emph{Weyl modules for {L}ie
  superalgebras}, Proc. Amer. Math. Soc. \textbf{147} (2019), no.~8,
  3191--3207.

\bibitem{MR2718936}
Vyjayanthi Chari, Ghislain Fourier, and Tanusree Khandai, \emph{A categorical
  approach to {W}eyl modules}, Transform. Groups \textbf{15} (2010), no.~3,
  517--549.

\bibitem{MR2423816}
Vyjayanthi Chari, Ghislain Fourier, and Prasad Senesi, \emph{Weyl modules for
  the twisted loop algebras}, J. Algebra \textbf{319} (2008), no.~12,
  5016--5038.

\bibitem{MR3848450}
Vyjayanthi Chari, Deniz Kus, and Matt Odell, \emph{Borel--de {S}iebenthal
  pairs, global {W}eyl modules and {S}tanley-{R}eisner rings}, Math. Z.
  \textbf{290} (2018), no.~1-2, 649--681.

\bibitem{MR2271991}
Vyjayanthi Chari and Sergei Loktev, \emph{Weyl, {D}emazure and fusion modules
  for the current algebra of {$\mathfrak{sl}_{r+1}$}}, Adv. Math. \textbf{207}
  (2006), no.~2, 928--960.

\bibitem{MR1850556}
Vyjayanthi Chari and Andrew Pressley, \emph{Weyl modules for classical and
  quantum affine algebras}, Represent. Theory \textbf{5} (2001), 191--223.

\bibitem{MR957457}
E.~Cline, B.~Parshall, and L.~Scott, \emph{Algebraic stratification in
  representation categories}, J. Algebra \textbf{117} (1988), no.~2, 504--521.

\bibitem{cline_stratifying}
Edward Cline, Brian Parshall, and Leonard Scott, \emph{Stratifying endomorphism
  algebras}, Mem. Amer. Math. Soc. \textbf{124} (1996), no.~591, viii+119.

\bibitem{dotsenko2023gracestitskantorkoechercategory}
Vladimir Dotsenko and Iryna Kashuba, \emph{The three graces in the
  {T}its--{K}antor--{K}oecher category}, 2023,
  \href{http://arxiv.org/abs/2310.20635}{{\ttfamily arXiv:2310.20635}}.

\bibitem{MR3520262}
S.~Eswara~Rao, V.~Futorny, and Sachin~S. Sharma, \emph{Weyl modules associated
  to {K}ac-{M}oody {L}ie algebras}, Comm. Algebra \textbf{44} (2016), no.~12,
  5045--5057.

\bibitem{MR940679}
Boris Feigin, \emph{Lie algebras {${\rm gl}(\lambda)$} and cohomology of a
  {L}ie algebra of differential operators}, Uspekhi Mat. Nauk \textbf{43}
  (1988), no.~2(260), 157--158.

\bibitem{MR2102326}
Boris Feigin and Sergey Loktev, \emph{Multi-dimensional {W}eyl modules and
  symmetric functions}, Comm. Math. Phys. \textbf{251} (2004), no.~3, 427--445.

\bibitem{feigin2023peterweyltheoremiwahorigroups}
Evgeny Feigin, Anton Khoroshkin, Ievgen Makedonskyi, and Daniel Orr,
  \emph{{P}eter--{W}eyl theorem for {I}wahori groups and highest weight
  categories}, 2023, \href{http://arxiv.org/abs/2307.02124}{{\ttfamily
  arXiv:2307.02124}}.

\bibitem{MR3703467}
Evgeny Feigin and Ievgen Makedonskyi, \emph{Generalized {W}eyl modules, alcove
  paths and {M}acdonald polynomials}, Selecta Math. (N.S.) \textbf{23} (2017),
  no.~4, 2863--2897.

\bibitem{MR3682816}
\bysame, \emph{Generalized {W}eyl modules for twisted current algebras},
  Teoret. Mat. Fiz. \textbf{192} (2017), no.~2, 284--306.

\bibitem{MR3787562}
Evgeny Feigin, Ievgen Makedonskyi, and Daniel Orr, \emph{Generalized {W}eyl
  modules and nonsymmetric {$q$}-{W}hittaker functions}, Adv. Math.
  \textbf{330} (2018), 997--1033.

\bibitem{MR3055822}
Ghislain Fourier, Nathan Manning, and Prasad Senesi, \emph{Global {W}eyl
  modules for the twisted loop algebra}, Abh. Math. Semin. Univ. Hambg.
  \textbf{83} (2013), no.~1, 53--82.

\bibitem{gorsky2024tautologicalclassessymmetrykhovanovrozansky}
Eugene Gorsky, Matthew Hogancamp, and Anton Mellit, \emph{Tautological classes
  and symmetry in {K}hovanov--{R}ozansky homology}, 2024,
  \href{http://arxiv.org/abs/2103.01212}{{\ttfamily arXiv:2103.01212}}.

\bibitem{MR1918676}
Mark Haiman, \emph{Vanishing theorems and character formulas for the {H}ilbert
  scheme of points in the plane}, Invent. Math. \textbf{149} (2002), no.~2,
  371--407.

\bibitem{hausel2022pwh2}
Tamas Hausel, Anton Mellit, Alexandre Minets, and Olivier Schiffmann,
  \emph{{$P=W$} via {$H_2$}}, 2022,
  \href{http://arxiv.org/abs/2209.05429}{{\ttfamily arXiv:2209.05429}}.

\bibitem{MR1104219}
Victor~G. Kac, \emph{Infinite-dimensional {L}ie algebras}, Cambridge University
  Press, Cambridge, 1990.

\bibitem{khoroshkin2015highestweightcategoriesmacdonald}
Anton Khoroshkin, \emph{Highest weight categories and {M}acdonald polynomials},
  2015, \href{http://arxiv.org/abs/1312.7053}{{\ttfamily arXiv:1312.7053}}.

\bibitem{MR4160929}
Ryosuke Kodera, \emph{Level one {W}eyl modules for toroidal {L}ie algebras},
  Lett. Math. Phys. \textbf{110} (2020), no.~11, 3053--3080.

\bibitem{lau2023jordanalgebrasweightmodules}
Michael Lau and Olivier Mathieu, \emph{Jordan algebras and weight modules},
  2023, \href{http://arxiv.org/abs/2312.16766}{{\ttfamily arXiv:2312.16766}}.

\bibitem{MR3338680}
Ivan Losev and Ben Webster, \emph{On uniqueness of tensor products of
  irreducible categorifications}, Selecta Math. (N.S.) \textbf{21} (2015),
  no.~2, 345--377.

\bibitem{MR3765456}
Nathan Manning, Erhard Neher, and Hadi Salmasian, \emph{Integrable
  representations of root-graded {L}ie algebras}, J. Algebra \textbf{500}
  (2018), 253--302.

\bibitem{MR4681327}
Sudipta Mukherjee, Santosha~Kumar Pattanayak, and Sachin~S. Sharma, \emph{Weyl
  modules for toroidal {L}ie algebras}, Algebr. Represent. Theory \textbf{26}
  (2023), no.~6, 2605--2626.

\bibitem{MR360732}
A.~N. Rudakov, \emph{Irreducible representations of infinite-dimensional {L}ie
  algebras of {C}artan type}, Izv. Akad. Nauk SSSR Ser. Mat. \textbf{38}
  (1974), 835--866.

\bibitem{MR634510}
George~B. Seligman, \emph{Rational constructions of modules for simple {L}ie
  algebras}, Contemporary Mathematics, vol.~5, American Mathematical Society,
  Providence, RI, 1981.

\bibitem{MR607583}
I.~R. Shafarevich, \emph{On some infinite-dimensional groups. {II}}, Izv. Akad.
  Nauk SSSR Ser. Mat. \textbf{45} (1981), no.~1, 214--226, 240.

\bibitem{stacks-project}
The {Stacks project authors}, \emph{The stacks project},
  \url{https://stacks.math.columbia.edu}, 2024.

\bibitem{TanYangian}
Yilan Tan, \emph{Finite-dimensional representations of {Y}angians}, Ph.D.
  thesis, University of Alberta, 2014.

\bibitem{MR3334147}
Yilan Tan and Nicolas Guay, \emph{Local {W}eyl modules and cyclicity of tensor
  products for {Y}angians}, J. Algebra \textbf{432} (2015), 228--251.

\bibitem{wiggins_stratified}
Giulian Wiggins, \emph{Stratified categories and a geometric approach to
  representations of the {S}chur algebra}, Ph.D. thesis, The University of
  Sydney, 2022.

\end{thebibliography}
\providecommand{\bysame}{\leavevmode\hbox to3em{\hrulefill}\thinspace}
\providecommand{\href}[2]{#2}

\end{document}